\documentclass[letterpaper,12pt]{amsart}
\textwidth=16.00cm 
\textheight=22.00cm 
\topmargin=0.00cm
\oddsidemargin=0.00cm 
\evensidemargin=0.00cm 
\headheight=0cm 
\headsep=0.5cm

\textheight=630pt

\usepackage{amsmath}
\usepackage{amsthm}
\usepackage{amssymb}
\usepackage{amsfonts}

\usepackage{latexsym,array,delarray,amsthm,amssymb,epsfig,setspace,color, tikz}

\usetikzlibrary{plotmarks}

\newtheorem{thm}{Theorem}[section]
\newtheorem{lemma}[thm]{Lemma}
\newtheorem{prop}[thm]{Proposition}
\newtheorem{cor}[thm]{Corollary}
\newtheorem{conj}[thm]{Conjecture}
\newtheorem*{thm*}{Theorem}
\newtheorem*{lemma*}{Lemma}
\newtheorem*{prop*}{Proposition}
\newtheorem*{cor*}{Corollary}
\newtheorem*{conj*}{Conjecture}

\theoremstyle{definition}
\newtheorem{defn}[thm]{Definition}
\newtheorem{ex}[thm]{Example}
\newtheorem*{ex*}{Example}

\newtheorem{notation}[thm]{Notation}

\theoremstyle{remark}
\newtheorem{rmk}[thm]{Remark}


\newcommand{\Z}{\mathbb{Z}}
\newcommand{\N}{\mathbb{N}}
\renewcommand{\P}{\mathbb{P}}

\newcommand{\R}{\mathbb{R}}
\newcommand{\C}{\mathbb{C}}


\newcommand{\bfa}{\mathbf{a}}

\newcommand{\bfc}{\mathbf{c}}

\newcommand{\bfe}{\mathbf{e}}

\newcommand{\bfh}{\mathbf{h}}


\newcommand{\Ical}{\mathcal{I}}
\newcommand{\Mcal}{\mathcal{M}}

\newcommand{\Hcal}{\mathcal{H}}


\newcommand{\conv}{\mathrm{conv}}
\newcommand{\ind}{\mbox{$\perp \kern-5.5pt \perp$}}

\newcommand{\Max}{\mathrm{Max}}
\newcommand{\Int}{\mathrm{Int}}
\newcommand{\rows}{\mathrm{rows}^{j_0}}
\newcommand{\rowsone}{\mathrm{rows}^{j_1}}
\newcommand{\cols}{\mathrm{cols}}
\newcommand{\Abar}{{\overline{A}}}
\newcommand{\ubar}{\overline{u}}
\newcommand{\Psibar}{\overline{\Psi}}
\newcommand{\cbar}{\overline{\bfc}}
\newcommand{\Bbar}{\overline{B}}
\newcommand{\rowspan}{\mathrm{rowspan}}

\title[Quasi-independence models with rational MLE]{Quasi-independence models with rational maximum likelihood estimator}

\author{Jane Ivy Coons}

\author{Seth Sullivant}

\begin{document}


\maketitle

\begin{abstract}
We classify the two-way quasi-independence models (independence models with structural zeros)
that have
rational maximum likelihood estimators, or MLEs.
We give a necessary and sufficient condition on the bipartite graph associated
to the model for the MLE to be rational.
In this case, we give an explicit formula for the MLE
in terms of combinatorial features of this graph.
We also use the Horn uniformization to show that 
for general log-linear models $\Mcal$ with rational MLE,
any model obtained by restricting to a face of the cone of sufficient statistics
of $\Mcal$ also has rational MLE.
\end{abstract}


\section{Introduction}\label{sec:IndepModelsWithSZ}

Huh \cite{huh2014} classified the varieties with rational maximum likelihood estimator
using Kapranov's Horn uniformization \cite{kapranov1991}.  In spite of the classification,
it can be difficult to tell a priori whether a given model has rational MLE, or not.
Duarte, Marigliano, and Sturmfels \cite{duarte2019} have since applied Huh's
ideas to varieties that are the closure of discrete statistical models.
In the present paper, we study this problem for a family of discrete statistical models
called quasi-independence models, also commonly known as independence models with structural zeros.
Because quasi-independence models have a simple structure whose description is determined by a 
bipartite graph, this is a natural test case for trying to apply Huh's theory.
Our complete classification of quasi-independence models with rational MLE
is the main result of the present paper (Theorems \ref{thm:Intro} and \ref{thm:Main}).

Let $X$ and $Y$ be two discrete random variables with $m$ and $n$ states, respectively.
Quasi-independence models describe the situation in which
 some combinations of states of $X$ and $Y$ cannot occur together,
 but $X$ and $Y$ are otherwise independent of one another.
This condition is known as quasi-independence in the statistics literature \cite{bishop2007}.
Quasi-independence models are basic models that arise 
in data analysis with log-linear models.
For example, quasi-independence models arise
in the biomedical field as rater agreement models \cite{agresti1992, rapallo2005}
and in engineering to model system failures at nuclear plants \cite{colombo1988}.
There is a great deal of literature regarding hypothesis testing under the assumption 
of quasi-independence, see, for example, \cite{bocci2019, goodman1994, smith1995}.
Results about existence and uniqueness of the maximum likelihood estimate
in quasi-independence models as well as 
explicit computations in some cases can be found in \cite[Chapter~5]{bishop2007}.

In order to define quasi-independence models, let $S \subset [m] \times [n]$ 
be a set of indices, where $[m] = \{1,2, \ldots, m\}$.  
These correspond to a matrix with structural zeros 
whose observed entries are given by the indices in $S$.  
We often use $S$ to refer to both the set of indices and the matrix 
representation of this set and abbreviate the
ordered pairs $(i,j)$ in $S$ by $ij$. For all $r$, we denote by $\Delta_{r-1}$ the open $(r-1)$-dimensional probability simplex in $\R^r$,
\[
\Delta_{r-1} := \{ x \in \R^r \mid x_i > 0 \text{ for all } i \text{ and } \sum_{i=1}^r x_i = 1 \}.
\]

\begin{defn}
Let $S \subset [m] \times [n]$.  Index the coordinates of $\R^{m+n}$ by $ (s_1, \dots, s_m, t_1, \dots, t_n) = (s,t)$.   Let $\R^S$ denote the real vector space of dimension $\#S$ whose coordinates
are indexed by $S$.
Define the monomial map $\phi^S: \R^{m + n} \rightarrow \R^{S}$ by
\[
\phi^S_{ij}(s,t) = s_i t_j.
\]
The \emph{quasi-independence model} associated to $S$ is the model,
\[
\Mcal_S := \phi^S(\R^{m+n}) \cap \Delta_{\#S -1}.
\]
\end{defn}

We note that the Zariski closure of $\Mcal_S$ is a toric variety since it is parametrized by monomials.
To any quasi-independence model, we can associate a bipartite graph in the following way.

\begin{defn}
The \emph{bipartite graph associated to $S$}, denoted $G_S$, is the bipartite graph with independent sets $[m]$ and $[n]$ with an edge between $i$ and $j$ if and only if $(i,j) \in S$. The graph $G_S$ is \emph{chordal bipartite} if every cycle of length greater than or equal to 6 has a chord. The graph $G_S$ is \emph{doubly chordal bipartite} if every cycle of length greater than or equal 6 has at least two chords. We say that $S$ is doubly chordal bipartite if $G_S$ is doubly chordal bipartite. 
\end{defn}

Let $u \in \N^{S}$ be a vector of counts of independent, identically distributed (iid) data.
The \emph{maximum likelihood estimate}, or MLE, for $u$ in $\Mcal_S$ is the
distribution $\hat{p} \in \Mcal_S$ that maximizes the probability of observing
the data $u$ over all distributions in the model.
We describe the maximum likelihood estimation problem in more detail in Section \ref{sec:LogLinModels}.
We say that $\Mcal_S$ has \emph{rational MLE} if for generic choices of $u$,
the MLE for $u$ in $\Mcal_S$ can be written as a rational function in the entries of $u$.
We can now state the key result of this paper.

\begin{thm}\label{thm:Intro}
Let $S \subset [m] \times [n]$ and let $\Mcal_S$ be the associated quasi-independence model.
Let $G_S$ be the bipartite graph associated to $S$.
Then $\Mcal_S$ has rational maximum likelihood estimate if and only if
$G_S$ is doubly chordal bipartite.
\end{thm}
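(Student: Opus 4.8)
The plan is to prove the two implications of Theorem~\ref{thm:Intro} separately, with the ``if'' direction produced together with the explicit formula of Theorem~\ref{thm:Main}. The structural fact underpinning everything is that $\Mcal_S$ is a log-linear model whose cone of sufficient statistics is spanned by the columns $e_i+e_j$ (for $ij\in S$) of the vertex--edge incidence matrix of $G_S$, and that for any $A_1\subseteq[m]$ and $A_2\subseteq[n]$ the generators indexed by $S'=S\cap(A_1\times A_2)$ span a \emph{face} of this cone: the linear functional on $\R^{m+n}$ that is $0$ on the coordinates in $A_1\cup A_2$ and $1$ elsewhere is nonnegative on every generator and vanishes exactly on those coming from $S'$. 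Hence the quasi-independence model of every \emph{vertex-induced subgraph} of $G_S$ is obtained from $\Mcal_S$ by restricting to a face of its cone of sufficient statistics. Combined with the Horn-uniformization result advertised in the abstract---a face restriction of a log-linear model with rational MLE again has rational MLE, proved by specializing the Horn parametrization---this yields, in contrapositive form, the engine for the ``only if'' direction: if the model of some vertex-induced subgraph of $G_S$ has maximum likelihood degree at least $2$, then so does $\Mcal_S$.

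For the ``only if'' direction, suppose $G_S$ is not doubly chordal bipartite. Then it contains a cycle $C$ of length $2k\ge 6$ with at most one chord, and the subgraph $H$ induced on $V(C)$ has edge set $E(C)$ together with at most one extra edge, so $H$ is either the even cycle $C_{2k}$ or $C_{2k}$ with a single chord. In the latter case the chord divides $C_{2k}$ into even cycles $C_{2a}$ and $C_{2b}$ with $a+b=k+1$ and $a,b\ge2$, and each sub-cycle is itself vertex-induced in $H$ (its only extra edge, the chord, belongs to both sub-cycles); since $a+b=k+1$, when $k\ge4$ one of $2a,2b$ is at least $6$ and we pass to that sub-cycle. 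By the engine above it therefore suffices to prove that the quasi-independence model of an even cycle $C_{2k}$ (that is, $\Mcal_S$ for any $S$ with $G_S\cong C_{2k}$) has maximum likelihood degree at least $2$ for every $k\ge3$, and that the quasi-independence model of $C_6$ together with one chord---a single graph up to isomorphism---does as well. For the cycle, $\overline{\Mcal_S}$ is the toric hypersurface cut out by the single circuit binomial of the cycle, and a direct Lagrange-multiplier computation collapses the likelihood equations to one univariate polynomial of degree $k$ or $k-1$, which for generic data has at least two roots once $k\ge3$; the $C_6$-plus-chord case (a codimension-two toric variety) is a finite computation.

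For the ``if'' direction, assume $G_S$ is doubly chordal bipartite; here we construct a candidate $\hat p=\hat p(u)$ that is manifestly rational in $u$ and verify it is the MLE. By the standard characterization for log-linear models it is enough to check that $\hat p$ lies on the toric variety $\overline{\Mcal_S}$ (satisfies every circuit relation of $G_S$), that its row and column sums equal the corresponding normalized marginals of $u$, and that $\hat p\in\Delta_{\#S-1}$ for generic $u$. I would carry this out by induction on $\#S$: using the elimination structure available because $G_S$ is chordal bipartite, locate a vertex $v$---a degree-one vertex if one exists, otherwise one read off from a bisimplicial edge---such that $G_S\setminus v$ is again doubly chordal bipartite; the coordinates $\hat p_{iv}$ for $i\in N_{G_S}(v)$ are then forced by the $v$-marginal of $u$ together with the inductively constructed estimate for $\Mcal_{S\setminus v}$, and one checks that this extension stays on the variety and preserves all previously matched marginals. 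Reassembling the recursion produces the closed form of Theorem~\ref{thm:Main}.

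I expect the main obstacle to be the ``if'' direction: one must identify the correct elimination scheme and closed-form estimator, and---this is the crux---show that the \emph{doubly} chordal bipartite hypothesis is exactly what makes the estimator simultaneously satisfy the circuit equations and the marginal constraints, so that the inductive step both preserves the graph class and keeps all marginals matched. A secondary point requiring care is the face-restriction principle itself, where the Horn uniformization must be applied in enough generality to cover arbitrary log-linear models, not just quasi-independence ones.
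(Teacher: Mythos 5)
Your ``only if'' direction is essentially the paper's: the facial-submodel principle (Theorem \ref{thm:FacialSubsetMLE}), the observation that vertex-induced subgraphs of $G_S$ give facial submatrices of $A(S)$ (your linear functional is the same face certificate used in Theorem \ref{thm:NotDCB}), and ML-degree computations for even cycles and the double square (Propositions \ref{prop:CycleMLdeg} and \ref{prop:DSMLdeg}); your reduction of ``cycle of length $\geq 6$ with at most one chord'' to ``induced cycle of length $\geq 6$ or induced double square'' is a correct, slightly more explicit version of what the paper asserts. Be aware, though, that the face-restriction principle is itself a theorem of this paper that requires a genuine limiting argument (Lemmas \ref{lem:InVariety} and \ref{lem:SuffStats}); invoking it from the abstract leaves that ingredient unproved, although you correctly flag it as needing care.

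The genuine gap is the ``if'' direction. The paper does not run an elimination induction on vertices: it writes down the closed-form estimator of Theorem \ref{thm:Main} (a clique/separator-type product over $\Max(ij)$ and $\Int(ij)$) and verifies Birch's theorem directly --- the point satisfies the $2\times 2$ minor equations (Propositions \ref{prop:DefiningEquations} and \ref{prop:ModelEquationsSatisfied}), and its row and column marginals match the normalized data, the latter via a careful induction over the poset $P(j_0)$ of maximal cliques meeting a fixed column (Lemma \ref{lem:SumInBlock}, Corollary \ref{cor:SumEntireBlock}). Your proposed inductive step --- that after deleting a suitable vertex $v$ the coordinates $\hat p_{iv}$ are ``forced by the $v$-marginal of $u$ together with the inductively constructed estimate for $\Mcal_{S\setminus v}$'' --- is unjustified and false as stated: when $v$ has degree greater than one, the single marginal equation $\sum_i \hat p_{iv}=u_{+v}/u_{++}$ does not determine the individual $\hat p_{iv}$, and even in the complete bipartite (ordinary independence) case the MLE of the reduced model on the reduced data is not a restriction or rescaling of the MLE of the full model (compare $u_{i+}u_{+j}/u_{++}^2$ with $(u_{i+}-u_{iv})u_{+j}/(u_{++}-u_{+v})^2$), so the recursion does not ``reassemble'' into Theorem \ref{thm:Main} without substantial additional structure that you have not supplied. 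Since you yourself identify this step as the crux and leave it open, the forward implication --- which occupies the bulk of the paper (Sections \ref{sec:Cliques}--\ref{sec:BirchsThm}) --- remains unproved in your proposal.
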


Theorem \ref{thm:Main} is a strengthened version of Theorem \ref{thm:Intro}
in which we give an explicit formula for the MLE when $G_S$ is doubly chordal bipartite.
The outline of the rest of the paper is as follows. 
In Section \ref{sec:LogLinModels}, we introduce general log-linear models
and their MLEs and discuss some key results on these topics.
In Section \ref{sec:FacialSubsets}, we discuss the notion of a facial  submodel of a log-linear model
and prove that facial submodels of models with rational MLE also have rational MLE.
In Section \ref{sec:FacialSZ}, we apply the results of Section \ref{sec:FacialSubsets}
to show that if $G_S$ is not doubly chordal bipartite, then $\Mcal_S$ does not have rational MLE.
The main bulk of the paper is in Sections \ref{sec:Cliques}, \ref{sec:FixedColumn} and \ref{sec:BirchsThm},
where we show that if $G_S$ is doubly chordal bipartite, then the MLE is rational
and we give an explicit formula for it. Section \ref{sec:Cliques} covers combinatorial features
of doubly chordal bipartite graphs and gives the statement of the main Theorem \ref{thm:Main}.
Sections \ref{sec:FixedColumn} and \ref{sec:BirchsThm} are concerned with the verification
that the formula for the MLE is correct.


\section{Log-Linear Models and their Maximum Likelihood Estimates}\label{sec:LogLinModels}

In this section, we collect some results from the literature on log-linear models
and maximum likelihood estimation in these models.  These
results will be important tools in the proof of Theorem \ref{thm:Main}.

Let $A \in \Z^{d \times r}$ with entries $a_{ij}$. 
Denote by $\mathbf{1}$ the vector of all ones in $\Z^r$. 
We assume throughout that $\mathbf{1} \in \rowspan(A)$.

\begin{defn}
The \emph{log-linear model} associated to $A$ is the set of probability distributions,
\[
\Mcal_A := \{ p \in \Delta_{r-1} \mid \log p \in \rowspan(A)\}.
\]
\end{defn}

Algebraic and combinatorial tools are well-suited for the study of log-linear models
since these models have monomial parametrizations. 
Define the map $\phi^A: \R^d \rightarrow \R^r$ by 
\[
\phi^A_j (t_1, \dots, t_d) = \prod_{i=1}^d t_i^{a_{ij}}.
\]
Then we have that $\Mcal_A = \phi^A(\R^d) \cap \Delta_{r-1}$.
Background on log-linear models can be found in \cite[Chapter~6.2]{sullivant2018}.
Denote by $\C[p] := \C[p_1, \dots, p_r]$ the polynomial ring in $r$ indeterminates.
Let $I_A \subset \C[p]$ denote the vanishing ideal of $\phi^A(\R^d)$
over the algebraically closed field $\C$.
Since $\phi^A$ is a monomial map, $I_A$ is a toric ideal.
For this reason, $\Mcal_A$ is also known as a \emph{toric model}.
Some key properties of $I_A$ are summarized in the following proposition.

\begin{prop}[\cite{sullivant2018}, Proposition 6.2.4]
The toric ideal $I_A$ is a binomial ideal and
\[
I_A = \langle p^u - p^v \mid u,v \in \N^r \text{ and } Au = Av \rangle.
\]
If $\mathbf{1} \in \rowspan(A)$, then $I_A$ is homogeneous.
\end{prop}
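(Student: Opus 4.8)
The plan is to use the standard description of $I_A$ as the kernel of the monomial ring homomorphism $\pi\colon\C[p]\to\C[t_1^{\pm1},\dots,t_d^{\pm1}]$ determined by $\pi(p_j)=\prod_{i=1}^{d}t_i^{a_{ij}}$, so that $\pi(p^u)=t^{Au}$ for every $u\in\N^r$, where $t^b:=\prod_i t_i^{b_i}$. (That the vanishing ideal of the image of the parametrization agrees with $\ker\pi$ is where the "toric" nature of $\Mcal_A$ enters; I would record this identification at the outset.) With this in hand, the inclusion $\langle p^u-p^v\mid Au=Av\rangle\subseteq I_A$ is immediate: $Au=Av$ gives $\pi(p^u-p^v)=t^{Au}-t^{Av}=0$. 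The content is the reverse inclusion, and the key fact I would invoke is that distinct Laurent monomials $t^b$ are $\C$-linearly independent in $\C[t_1^{\pm1},\dots,t_d^{\pm1}]$.

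Concretely, given $f=\sum_u c_u p^u\in I_A$ (a finite sum with every $c_u\neq0$), I would partition the exponents occurring in $f$ into classes $C_1,\dots,C_k$, where $u$ and $u'$ lie in the same class exactly when $Au=Au'$, and pick a representative $u_\ell\in C_\ell$ for each $\ell$. Applying $\pi$ and collecting by the value of $Au$ gives $0=\pi(f)=\sum_{\ell=1}^{k}\bigl(\sum_{u\in C_\ell}c_u\bigr)t^{Au_\ell}$; since $t^{Au_1},\dots,t^{Au_k}$ are pairwise distinct, linear independence forces $\sum_{u\in C_\ell}c_u=0$ for every $\ell$. Then
\[
f=\sum_{\ell=1}^{k}\sum_{u\in C_\ell}c_u p^u
=\sum_{\ell=1}^{k}\sum_{u\in C_\ell}c_u\,(p^u-p^{u_\ell})+\sum_{\ell=1}^{k}\Bigl(\sum_{u\in C_\ell}c_u\Bigr)p^{u_\ell}
=\sum_{\ell=1}^{k}\sum_{u\in C_\ell}c_u\,(p^u-p^{u_\ell}),
\]
and each $p^u-p^{u_\ell}$ on the right satisfies $Au=Au_\ell$, so $f\in\langle p^u-p^v\mid Au=Av\rangle$. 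This proves equality of the two ideals; in particular $I_A$ is generated by binomials, hence is a binomial ideal. (Alternatively, one can run the same argument through the $\Z^d$-grading on $\C[p]$ with $\deg p_j=Ae_j$, for which $\pi$ is homogeneous, and read off binomial generation from each graded piece.)

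For homogeneity, the plan is to use $\mathbf{1}\in\rowspan(A)$ to fix a vector $c$ with $\mathbf{1}^{\mathsf T}=c^{\mathsf T}A$. Then for any generator $p^u-p^v$ with $Au=Av$ one has $\mathbf{1}^{\mathsf T}u=c^{\mathsf T}(Au)=c^{\mathsf T}(Av)=\mathbf{1}^{\mathsf T}v$, so $p^u$ and $p^v$ have the same total degree and $p^u-p^v$ is homogeneous in the standard grading. An ideal generated by homogeneous elements is homogeneous, so $I_A$ is homogeneous.

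I do not expect a serious obstacle here; the statement is classical. The one point needing care is the bookkeeping in the reverse inclusion when $A$ has negative entries — the image of $\pi$ must be taken in the Laurent ring $\C[t_1^{\pm1},\dots,t_d^{\pm1}]$ rather than in $\C[t_1,\dots,t_d]$ — together with the preliminary identification $I_A=\ker\pi$; once these are in place the rest is the routine linear-algebra computation above.
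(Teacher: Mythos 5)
Your proposal is correct and follows the standard route: the paper does not prove this proposition but cites it (Proposition 6.2.4 of the Sullivant reference), and your argument — identifying $I_A$ with $\ker\pi$, splitting a polynomial along the fibers of $A$ using linear independence of distinct Laurent monomials to get binomial generation, and deducing homogeneity from $\mathbf{1}^{\mathsf T}=c^{\mathsf T}A$ — is essentially the proof given in that source. The only step to write out in full is the preliminary identification of the vanishing ideal of $\phi^A(\R^d)$ with $\ker\pi$ (via Zariski density of the real parametrized points), which you correctly flag as needed.
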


Note that the quasi-independence model associated to a set 
$S \subset [m] \times [n]$ is a log-linear model
with respect to matrix $A(S)$ constructed in the following way.
We have $A(S) \in \Z^{(m+n) \times \#S}$. The $ij$ column of $A(S)$, denoted $a^{ij}$,
has $k$th entry:
\[
a^{ij}_k = \begin{cases}
1, \text{ if } k = i \\
1, \text{ if } k = m + j \\
0, \text{ otherwise.}
\end{cases}
\]
In this way, $\Mcal_S = \Mcal_{A(S)}$. 
Note that $\mathbf{1} \in \rowspan(A(S))$ for all $S$, since it can be written as the sum of the first $m$ rows of $A(S)$.

Given independent, identically distributed (iid) data $u \in \N^r$,
we wish to infer the distribution $p \in \Mcal_A$ that is ``most likely" to have generated it.
This is the central problem of maximum likelihood estimation.

\begin{defn}
Let $\Mcal$ be a discrete statistical model in $\R^r$ and let $u \in \N^r$ be an iid vector of counts.
The \emph{likelihood function} is
\[
L(p \mid u) = \prod_{i=1}^r p_i^{u_i}.
\]

The \emph{maximum likelihood estimate}, or MLE, for $u$ is the distribution in $\Mcal$ that maximizes the likelihood function; that is,
it is the distribution
\[
\hat{p} =\underset{p \in \Mcal}{\mathrm{argmax}} \ L(p \mid u).
\]
\end{defn}

Note that for a fixed $p \in \Mcal$, $L(p \mid u)$ is exactly the probability of observing
 $u$ from the distribution $p$. 
Hence, the MLE for $u$ is the distribution $\hat{p} \in \Mcal$ that
maximizes the probability of observing $u$.
The map $u \mapsto \hat{p}$ 
is a function of the data known as the \emph{maximum likelihood estimator}.
We are particularly interested in the case when the 
coordinate functions of the maximum likelihood estimator are rational functions of the data.
In this case, we say that $\Mcal$ has \emph{rational MLE}.

The \emph{log-likelihood function} $\ell(p \mid u)$ is the natural logarithm of $L(p \mid u)$.
Note that since the natural log is a concave function, $\ell(p \mid u)$ and $L(p \mid u)$ have the same maximizers.
We define the \emph{maximum likelihood degree} of $\Mcal$ to be
the number of critical points of $\ell(p \mid u)$ for generic $u$.
Huh and Sturmfels \cite{hs2014} show that the maximum likelihood degree is well-defined.
In particular, $\Mcal$ has maximum likelihood degree 1 if and only if it has
rational maximum likelihood estimator \cite{huh2014}.
The following result of Huh gives a characterization of the form of this maximum likelihood estimator, when it exists.

\begin{thm}[\cite{huh2014}]\label{thm:Huh}
A discrete statistical model $\Mcal$ has maximum likelihood degree 1 if and only if
there exists $h = (h_1, \dots, h_r) \in (\C^*)^r$, a positive integer $d$,
and a matrix $B \in \Z^{d \times r}$ with entries $b_{ij}$ whose column sums are zero
such that the map
\[
\Psi: \P^{r-1} \dashrightarrow (\C^*)^r
\]
with coordinate function
\[
\Psi_k(u_1, \dots, u_r) = h_k \prod_{i=1}^d \big(\sum_{j=1}^r b_{ij} u_j \big)^{b_{ik}}
\]
maps dominantly onto $\overline{\Mcal}$. In this case, the function $\Psi$ is
the maximum likelihood estimator for $\Mcal$.
\end{thm}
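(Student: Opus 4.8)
The plan is to recast the statement ``$\Mcal$ has ML degree $1$'' as a birationality assertion about the likelihood correspondence, and then to invoke Kapranov's Horn uniformization \cite{kapranov1991} to pin down the shape of the inverse map. Set $X := \overline{\Mcal}\subseteq\P^{r-1}$ and, after passing to the smooth very affine locus of $X$, consider the likelihood correspondence $\mathcal{L}_X\subseteq\P^{r-1}_p\times\P^{r-1}_u$: the closure of the set of pairs $(p,u)$ for which $p$ is a critical point of $\ell(p\mid u)$ on $X$, equivalently for which the vector $(u_1/p_1,\dots,u_r/p_r)$ is conormal to $X$ at $p$ (modulo the all-ones direction, which is where $\mathbf 1\in\rowspan(A)$ enters). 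By the definition of the ML degree, $\Mcal$ has ML degree $1$ precisely when the projection $\pi_u\colon\mathcal{L}_X\to\P^{r-1}_u$ is birational, and in that case the first coordinate of $\pi_u^{-1}\colon\P^{r-1}_u\dashrightarrow\mathcal{L}_X$ is the rational maximum likelihood estimator $\Psi$. So the theorem reduces to showing that $\pi_u$ is birational if and only if a Horn map of the stated form dominates $X$, together with the identification of that Horn map with $\Psi$.

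For the ``if'' direction I would argue by direct verification. Given $\Psi_k(u)=h_k\prod_{i=1}^d\lambda_i(u)^{b_{ik}}$ with $\lambda_i(u)=\sum_{j=1}^r b_{ij}u_j$ and column sums $\sum_k b_{ik}=0$, the vanishing of these column sums makes each $\Psi_k$ a ratio of forms of equal degree, so $\Psi$ is a well-defined rational map on $\P^{r-1}$; this is the exact analogue of the hypothesis $\mathbf 1\in\rowspan(A)$. I would then check that $(\Psi(u),u)\in\mathcal{L}_X$ for generic $u$, that is, that $u_k/\Psi_k(u)=(u_k/h_k)\prod_i\lambda_i(u)^{-b_{ik}}$ is conormal to $X$ at $\Psi(u)$. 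This is where the Horn structure does the work: by Kapranov's description, the image of a Horn map built from $B$ is (a translate of) the projective dual of the toric variety attached to the row span of $B$, and its conormal directions are exactly the monomials appearing in the coordinates $u_k/\Psi_k(u)$. Exhibiting $(\Psi(u),u)\in\mathcal{L}_X$ for generic $u$ presents $\Psi$ as a rational section of $\pi_u$; since $\pi_u$ is generically finite, having a rational section forces $\deg\pi_u=1$, so $\Mcal$ has ML degree $1$, and uniqueness of the critical point for generic $u$ identifies $\Psi$ with the MLE.

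For the ``only if'' direction, assume $\pi_u$ is birational. The structural input I would use is that the likelihood correspondence of a toric model is again toric: $\mathcal{L}_X$ is cut out by the binomials of $I_A$ together with the critical (score) equations, and after introducing new coordinates $q_k=u_k/p_k$ for the conormal slot these equations are themselves binomial, so $\mathcal{L}_X$ is birational to a toric variety. Birationality of $\pi_u$ then says that this toric variety is rationally --- hence monomially --- parametrized by $u_1,\dots,u_r$; Kapranov's Horn uniformization converts such a parametrization into a Horn map, producing the integer matrix $B=(b_{ik})$ with exponent pattern $\Psi_k(u)=h_k\prod_i(\sum_j b_{ij}u_j)^{b_{ik}}$, with the projectivity of $X$ (equivalently $\mathbf 1\in\rowspan(A)$) forcing the columns of $B$ to sum to zero and the residual scalars collected into $h\in(\C^*)^r$. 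Composing this Horn map with $\pi_u$ recovers the identity on $\P^{r-1}_u$, so it dominates $X$ and coincides with $\Psi$.

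The hard part will be the ``only if'' direction: one must show that ML degree $1$ genuinely forces the likelihood correspondence --- equivalently, the relevant projective dual variety --- to be toric, so that Kapranov's uniformization applies and yields \emph{integer} exponents with zero column sums rather than some unstructured rational parametrization. This is precisely where the Horn uniformization is indispensable and where essentially all of the content of the theorem lies; the ``if'' direction is, by contrast, a more mechanical, if lengthy, verification.
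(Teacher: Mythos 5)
The paper does not actually prove this statement: it is imported verbatim from Huh's work \cite{huh2014} (with \cite{duarte2019} as a companion reference), so there is no in-paper proof to compare against; what follows assesses your proposal on its own terms. Your ``if'' direction is the standard and essentially correct strategy: exhibit $(\Psi(u),u)$ as a point of the likelihood correspondence for generic $u$, note that the likelihood correspondence is irreducible with $\pi_u$ generically finite of degree equal to the ML degree, and conclude that a rational section forces degree one. But the step you delegate to ``Kapranov's description'' --- that $u_k/\Psi_k(u)$ is conormal to the image at $\Psi(u)$, i.e.\ that $\Psi(u)$ really is a critical point of $\ell(\,\cdot\mid u)$ --- is precisely where all the computation lives (in Huh's paper, and in the verification criteria of Duarte--Marigliano--Sturmfels, this occupies the bulk of the argument, including the identity $\sum_k\Psi_k(u)=1$ and the functional equations the linear forms $\lambda_i$ must satisfy). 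As written it is an assertion, not a proof.

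The genuine gap is in the ``only if'' direction. The theorem concerns an arbitrary discrete statistical model $\Mcal$, yet your argument assumes from the outset that $\Mcal$ is toric ($\mathbf{1}\in\rowspan(A)$, binomial ideal $I_A$) and then claims the likelihood correspondence is cut out by binomials after the substitution $q_k=u_k/p_k$. That is not so: the score equations are linear in $u$ (for log-linear models they read $Au=u_+Ap$), not binomial, so $\mathcal{L}_X$ is not a toric variety even when $X$ is, and for general $\Mcal$ there is no toric structure at all --- indeed images of Horn maps are typically not toric, which is exactly why the theorem is nontrivial. The inference ``birational, hence rationally, hence monomially parametrized'' is a non sequitur: birationality of $\pi_u$ gives a rational inverse, but nothing forces its coordinates to be products of powers of \emph{linear forms} with \emph{integer} exponents and zero column sums. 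Huh's actual argument runs through the logarithmic Gauss map and Kapranov's characterization of reduced $A$-discriminants (the hypersurfaces whose logarithmic Gauss map is birational), extended to arbitrary codimension by passing to an associated dual/discriminantal hypersurface; that machinery, not a toric reduction of the likelihood correspondence, is what produces the matrix $B$ and the Horn shape of $\Psi$. So while your framing via the likelihood correspondence is reasonable, the core of the ``only if'' direction is missing and the route you propose for it would not go through.
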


In this context, the pair $(B,h)$ is called the \emph{Horn pair} that defines $\Psi$, and $\Psi$ is called the \emph{Horn map}. For more details about the Horn map and its connection to the theory of $A$-discriminants, we refer the reader to \cite{duarte2019} and \cite{huh2014}.

\begin{ex}\label{ex:HornMap}
Consider the quasi-independence model 
associated to 
\[
S = \{ (1,1), (1,2), (1,3), (2,1), (2,2), (2,3), (3,1), (3,2) \}.
\]
This is the log-linear model whose defining matrix is
\[
A = \begin{bmatrix}
1 & 1 & 1 & 0 & 0 & 0 & 0 & 0 \\
0 & 0 & 0 & 1 & 1 & 1 & 0 & 0 \\
0 & 0 & 0 & 0 & 0 & 0 & 1 & 1 \\
1 & 0 & 0 & 1 & 0 & 0 & 1 & 0 \\
0 & 1 & 0 & 0 & 1 &0 & 0 & 1 \\
0 & 0 & 1 & 0 & 0 & 1 & 0 & 0
\end{bmatrix}.
\]
We index the columns of $A$ by the ordered pairs in $S$
in the given order.
Note that we have $\Mcal_S = \Mcal_{A(S)}.$
Let $u \in \N^{S}$ be a vector of counts of iid data for the model $\Mcal_S$.

According to Theorem \ref{thm:Intro}, $\Mcal_S$ has rational MLE.
Theorem \ref{thm:Main} shows that the associated Horn pair is
\[
B = \begin{bmatrix}
1 & 1 & 1 & 0 & 0 & 0 & 0 & 0 \\
0 & 0 & 0 & 1 & 1 & 1 & 0 & 0 \\
0 & 0 & 0 & 0 & 0 & 0 & 1 & 1 \\
1 & 0 & 0 & 1 & 0 & 0 & 1 & 0 \\
0 & 1 & 0 & 0 & 1 &0 & 0 & 1 \\
0 & 0 & 1 & 0 & 0 & 1 & 0 & 0 \\
1 & 1 & 0 & 1 & 1 & 0 & 0 & 0 \\
-1 & -1 & -1 & -1 & -1 & -1 & 0 & 0 \\
-1 & -1 & 0 & -1 & -1 & 0 & -1 & -1 \\
-1 & -1 & -1 & -1 & -1 & -1 & -1 & -1 \\
\end{bmatrix}
\]
with $h = (-1, -1, 1, -1, -1, 1, 1, 1)$. The columns of $B$ and $h$ are also indexed by the elements of $S$.
We can use this Horn pair to write the MLE as a rational function of the data. 
Denote by $u_{++}$ the sum of all entries of $u$, and abbreviate each ordered pair $(i,j) \in S$ by $ij$.
Then for example, the $(1,3)$ coordinate of the MLE is
\begin{align*}
\hat{p}_{13} &= h_{13}(u_{11} + u_{12} + u_{13})^1(u_{13} + u_{23})^1(u_{11} + u_{12} + u_{13} + u_{21} + u_{22} + u_{23})^{-1} u_{++}^{-1} \\
&= \frac{(u_{11} + u_{12} + u_{13})(u_{13} + u_{23})}{u_{++} (u_{11} + u_{12} + u_{13} + u_{21} + u_{22} + u_{23})}.
\end{align*}
Similarly, the $(2,3)$ coordinate is
\[
\hat{p}_{23} = \frac{(u_{21} + u_{22} + u_{23})(u_{13} + u_{23})}{u_{++} (u_{11} + u_{12} + u_{13} + u_{21} + u_{22} + u_{23})}.
\]

\end{ex}

The following theorem, known as Birch's Theorem, says that the maximum likelihood estimate for  $u$ in a log-linear model $\Mcal_A$, if it exists, is the unique distribution $\hat{p}$ in $\Mcal_A$ with the same sufficient statistics as the normalized data. A proof of this result can be found in \cite[Chapter~7]{sullivant2018}.

\begin{thm}[Birch's Theorem]\label{thm:Birch}
Let $A \in \Z^{n \times r}$ such that $\mathbf{1} \in \mathrm{rowspan}(A)$. 
Let $u \in \R_{\geq 0}^r$ and let $u_+ = u_1 + \dots + u_r$. 
Then the maximum likelihood estimate in the log-linear model $\Mcal_A$ given data $u$
is the unique solution, if it exists, to the equations $Au = u_+Ap$ subject to $p \in \Mcal_A$.
\end{thm}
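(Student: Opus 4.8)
The plan is to realize $\Mcal_A$ as an exponential family with sufficient statistic given by the matrix $A$, and thereby convert the constrained problem of maximizing $\ell(p \mid u)$ over $p \in \Mcal_A$ into an \emph{unconstrained} concave maximization over a parameter vector $\theta \in \R^n$. The hypothesis $\mathbf{1} \in \rowspan(A)$ is exactly what makes this conversion work, since it lets the normalization $\sum_j p_j = 1$ be absorbed rather than imposed separately. Two harmless reductions come first: we may assume $u_+ > 0$ (otherwise $u = 0$ and everything is trivial), and we may assume $A$ has full row rank $n$, since deleting rows that are linear combinations of the others changes neither $\rowspan(A)$ --- hence not $\Mcal_A$ --- nor the solution set of $Au = u_+ Ap$ over $p \in \Mcal_A$.

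Write $a^j$ for the $j$th column of $A$. For $\theta \in \R^n$ set $q_j(\theta) = \exp(\langle \theta, a^j\rangle)$ and $p(\theta) = q(\theta)/\sum_k q_k(\theta)$. I would first check that $\theta \mapsto p(\theta)$ maps $\R^n$ onto $\Mcal_A$: on one hand $\log p(\theta) = A^{\top}\theta - \big(\log \sum_k q_k(\theta)\big)\mathbf{1}$ lies in $\rowspan(A)$ because $\mathbf{1} \in \rowspan(A)$, so $p(\theta) \in \Mcal_A$; on the other hand any $p \in \Mcal_A$ has $\log p = A^{\top}\theta$ for some $\theta$ (as $\rowspan(A) = \{A^{\top}\theta : \theta \in \R^n\}$), and since $p \in \Delta_{r-1}$ already, $p = p(\theta)$. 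Then I would substitute into the log-likelihood:
\[
\tilde{\ell}(\theta) \;:=\; \ell\big(p(\theta) \mid u\big) \;=\; \langle Au, \theta\rangle - u_+ \log \sum_k \exp(\langle \theta, a^k\rangle),
\]
so that the MLE, if it exists, corresponds to a maximizer of $\tilde{\ell}$ over all of $\R^n$. A direct computation gives $\nabla \tilde{\ell}(\theta) = Au - u_+ A\,p(\theta)$, so $\theta$ is a critical point of $\tilde{\ell}$ if and only if $p := p(\theta)$ solves Birch's equations $Au = u_+ Ap$.

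Next I would establish concavity and uniqueness. The Hessian of $\tilde{\ell}$ equals $-u_+$ times the matrix $\Sigma(\theta) = \sum_j p_j(\theta)\, a^j (a^j)^{\top} - (A p(\theta))(A p(\theta))^{\top}$, which is positive semidefinite (it is the covariance matrix of the vector-valued statistic $j \mapsto a^j$ under the distribution $p(\theta)$); hence $\tilde{\ell}$ is concave and every critical point is a global maximizer. Moreover $v^{\top}\Sigma(\theta) v$ is a variance and so vanishes only when $\langle v, a^j\rangle$ is independent of $j$ --- all $p_j(\theta)$ being positive --- i.e. only when $A^{\top}v \in \mathrm{span}(\mathbf{1})$; since $A$ has full row rank and $A^{\top}c = \mathbf{1}$ for a unique $c$, this means $v \in \mathrm{span}(c)$, and one checks $\tilde{\ell}(\theta + \lambda c) = \tilde{\ell}(\theta)$ for all $\lambda$ (using $\langle c, a^j\rangle = 1$ and $\langle Au, c\rangle = u_+$). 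Thus $\tilde{\ell}$ is strictly concave modulo the line $\mathrm{span}(c)$, so its set of maximizers, if nonempty, is a single coset $\theta^{\ast} + \mathrm{span}(c)$, on which $p(\theta)$ is constant.

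Putting the pieces together: if some $p \in \Mcal_A$ satisfies $Au = u_+ Ap$, then any $\theta$ with $p(\theta) = p$ is a critical point, hence a global maximizer, of $\tilde{\ell}$, so $p$ is the MLE; and if two elements of $\Mcal_A$ both solve Birch's equations they both correspond to maximizers of $\tilde{\ell}$, which form one coset giving a single distribution, so they coincide. Conversely, if the MLE exists it lies in the open simplex, so the corresponding $\theta$ is an interior critical point of $\tilde{\ell}$ and $Au = u_+ Ap$ holds. This is exactly the assertion. I expect the gradient and Hessian computations to be routine; the point that needs care is that strict concavity holds only \emph{modulo} the redundancy direction $\mathrm{span}(c)$ produced by $\mathbf{1} \in \rowspan(A)$, so uniqueness must be argued for the distribution $\hat p$ rather than for the parameter $\theta$. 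The clause ``if it exists'' lets us avoid proving that the supremum of $\tilde{\ell}$ is attained --- which can fail when the data lie on a proper face of the cone of sufficient statistics --- so only the equivalence and the uniqueness need to be proved.
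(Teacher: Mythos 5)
Your proof is correct, but note that the paper does not actually prove this statement: it records Birch's Theorem as a known result and points to \cite[Chapter~7]{sullivant2018} for the proof, so there is no internal argument to compare against line by line. What you have written is essentially the standard exponential-family proof that such references give: parametrize $\mathcal{M}_A$ by $p(\theta)\propto\exp(A^{\top}\theta)$ (surjectivity using $\mathbf{1}\in\mathrm{rowspan}(A)$), observe that the pulled-back log-likelihood $\tilde{\ell}(\theta)=\langle Au,\theta\rangle-u_+\log\sum_k e^{\langle\theta,a^k\rangle}$ has gradient $Au-u_+Ap(\theta)$ and Hessian $-u_+$ times a covariance matrix, and then handle the one genuine subtlety correctly: concavity is strict only modulo the direction $c$ with $A^{\top}c=\mathbf{1}$, along which $p(\theta)$ is constant, so uniqueness is a statement about the distribution $\hat p$ rather than the parameter. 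Your reductions ($u_+>0$, full row rank) are harmless, and the identification of critical points of $\tilde{\ell}$ with solutions of $Au=u_+Ap$ in $\mathcal{M}_A$, together with the fact that the maximizer set of $\tilde{\ell}$ is a single coset $\theta^{\ast}+\mathrm{span}(c)$ (which follows from constancy along a maximizing segment forcing the segment direction into the Hessian kernel --- worth one explicit line), gives exactly the claimed equivalence and uniqueness. One alternative worth knowing is the shorter information-inequality route: if $q\in\mathcal{M}_A$ satisfies $Aq=Au/u_+$, then for any $p\in\mathcal{M}_A$ one has $\log q-\log p=A^{\top}\gamma$, hence $\ell(q\mid u)-\ell(p\mid u)=\langle Au,\gamma\rangle=u_+\langle q,A^{\top}\gamma\rangle=u_+\,D(q\,\|\,p)\geq 0$ with equality iff $p=q$; this delivers both optimality and uniqueness in three lines without any Hessian computation, though your route has the advantage of also exhibiting the converse (MLE $\Rightarrow$ moment equations) as an unconstrained first-order condition.
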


\begin{ex*}[Example \ref{ex:HornMap}, continued]
Consider the last row $a_6$ of the matrix $A$. 
One sufficient statistic of $\Mcal_A$ is $a_6 \cdot u = u_{13} + u_{23}$.
We must check that $a_6 \cdot u = u_{++} a_6 \cdot \hat{p}$.
Indeed, we compute that
\begin{align*}
a_6 \cdot \hat{p} &= \frac{(u_{11} + u_{12} + u_{13})(u_{13} + u_{23})}{u_{++} (u_{11} + u_{12} + u_{13} + u_{21} + u_{22} + u_{23})} +
  \frac{(u_{21} + u_{22} + u_{23})(u_{13} + u_{23})}{u_{++} (u_{11} + u_{12} + u_{13} + u_{21} + u_{22} + u_{23})} \\
  &= (u_{13} + u_{23}) \frac{(u_{11} + u_{12} + u_{13} + u_{21} + u_{22} + u_{23})}{u_{++} (u_{11} + u_{12} + u_{13} + u_{21} + u_{22} + u_{23}) } \\
  &= \frac{u_{13} + u_{23}}{u_{++}},
\end{align*}
as needed.

\end{ex*}


\section{Facial Submodels of Log-Linear Models}\label{sec:FacialSubsets}

In order to prove that a quasi-independence model  with rational MLE
must have a doubly chordal bipartite associated graph $G_S$, we first prove a result
that applies to general log-linear models with rational MLE.
Let $A \in \Z^{n \times r}$ be the matrix defining the monomial map for the log-linear model $\Mcal_A$.
Let $I_A$ denote the vanishing ideal of the Zariski closure of $\Mcal_A$. 
We assume throughout that $\mathbf{1} \in \rowspan(A)$.
Let $P_A = \conv(A)$, where $\conv(A)$ denotes the convex hull of the columns $\bfa_1, \dots, \bfa_r$ of $A$. 

We assume throughout that $P_A$ has $n$ facets, $F_1, \dots, F_n$, 
and that the $ij$ entry of $A$, denoted $a_{ij}$ is equal to the lattice distance 
between the $j$th column of $A$ and facet $F_i$.
This is not a restriction, since one can always reparametrize a log-linear model in this way \cite[Theorem~27]{rauh2011}.
Indeed, given a polytope $Q$, a matrix $A$ that satisfies the above condition
is a \emph{slack matrix} of $Q$, and the convex hull of the columns of $A$ is
affinely isomorphic to $Q$ \cite{gouveia2013}.

Let $\Abar$ be a matrix whose columns are a subset of $A$. 
Without loss of generality, assume that the columns of $\Abar$ are $\bfa_1, \dots, \bfa_s$.

\begin{defn}
The submatrix $\Abar$ is called a \emph{facial submatrix} of $A$ if $P_{\bar{A}}$ is a face of $P_A$.  The corresponding statistical model $\Mcal_\Abar$ is called a \emph{facial submodel}
of $\Mcal_A$.\footnote{Note that the term ``facial submodel'' is a slight abuse of terminology because $\Mcal_\Abar$
is not a submodel of $\Mcal_A$.  This is because the log-linear model $\Mcal_A$ does
not include distributions on the boundary of the probability simplex.  Technically,
$\Mcal_\Abar$ is a submodel of the closure of $\Mcal_A$.}
\end{defn}

Let $\bfe_i$ denote the $i$th standard basis vector in $\R^n$. 
Then $\bfe_i \cdot \bfa_j = 0$ if $\bfa_j$ lies on $F_i$ and 
$\bfe_i \cdot \bfa_j \geq 1$ otherwise. 
So under our assumptions on $A$, this definition of a facial submatrix of $A$ 
aligns with the one given in \cite{geiger2006} and \cite{rauh2011}.
We prove the following result 
concerning the maximum likelihood estimator for $\Mcal_{\Abar}$ 
when $\bar{A}$ is a facial submatrix of $A$.
This result was used implicitly in the proof of Theorem 4.4 of \cite{geiger2006}.

\begin{thm}\label{thm:FacialSubsetMLE}
Let $A \in \Z^{n \times r}$ and let $\Abar \in \Z^{n \times s}$ consist of the first $s$ columns of $A$. 
Suppose that $\Abar$ is a facial submatrix of $A$. 
Let $\Mcal_A$ have rational maximum likelihood estimator $\Psi$ given by the Horn pair $(B,h)$ where $B \in \Z^{d \times r}$
and $h \in (\C^*)^r$. 
Let $\overline{B}$ denote the submatrix consisting of the first $s$ columns of $B$ and let $\bar{h} = (h_1, \dots, h_s)$.
Then $\Mcal_{\Abar}$ has rational maximum likelihood estimator $\Psibar$ given by the Horn pair $(\overline{B},\bar{h})$.
\end{thm}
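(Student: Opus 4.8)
The plan is to prove directly that the Horn map $\Psibar$ attached to the pair $(\Bbar,\bar h)$ is the maximum likelihood estimator of $\Mcal_\Abar$, by realizing it as a limit of the estimator $\Psi$ of $\Mcal_A$ along data that degenerates onto the face $P_\Abar$ of $P_A$. Deleting the zero rows of $B$ changes neither $\Psi$ nor $\Psibar$, so we may assume $B$ has none. Fix a generic $u\in\R^s_{>0}$ and a generic $v\in\R^{r-s}_{>0}$, and for $\epsilon>0$ put $u(\epsilon):=(u_1,\dots,u_s,\epsilon v_1,\dots,\epsilon v_{r-s})\in\R^r_{>0}$. By Huh's Theorem~\ref{thm:Huh}, for all sufficiently small $\epsilon>0$ the point $\Psi(u(\epsilon))$ is the MLE of $\Mcal_A$ for the data $u(\epsilon)$; being the MLE of a log-linear model for strictly positive data it lies in $\Mcal_A\subset\Delta_{r-1}$, and by Birch's Theorem~\ref{thm:Birch} it satisfies $A u(\epsilon)=u(\epsilon)_+\,A\,\Psi(u(\epsilon))$.

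The first step is to compute $\lim_{\epsilon\to 0^+}\Psi(u(\epsilon))$. Writing $L_i(\epsilon)=\sum_{j\le s}b_{ij}u_j+\epsilon\sum_{j>s}b_{ij}v_{j-s}$ for the $i$th linear form occurring in $\Psi$, split the rows of $B$ according to whether the truncated row $(b_{i1},\dots,b_{is})$ is zero: if not, then $L_i(0)=\sum_{j\le s}b_{ij}u_j\ne0$ by genericity of $u$; if so, then $L_i(\epsilon)=\epsilon\,\tilde L_i$ with $\tilde L_i=\sum_{j>s}b_{ij}v_{j-s}\ne0$ by genericity of $v$, since $B$ has no zero rows. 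Substituting, $\Psi_j(u(\epsilon))=\epsilon^{e_j}g_j(\epsilon)$ where $e_j=\sum_{i:\,(b_{i1},\dots,b_{is})=0}b_{ij}$ and $g_j$ is a rational function of $\epsilon$ regular and nonzero at $0$; moreover $e_k=0$ for $k\le s$, and there $g_k(0)=\Psibar_k(u)$, the zero rows of $\Bbar$ contributing only the trivial factor $0^0=1$. Because $\Psi_j(u(\epsilon))\in(0,1)$ remains bounded as $\epsilon\to0^+$, we must have $e_j\ge0$ for all $j$; hence $\hat p:=\lim_{\epsilon\to0^+}\Psi(u(\epsilon))$ exists, lies in the closed set $\overline{\Mcal_A}\cap\overline{\Delta_{r-1}}$, and satisfies $\hat p_k=\Psibar_k(u)$ for $k\le s$.

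It remains to identify $\hat p$. Taking the limit in the Birch equations gives $A\hat p=\tfrac{1}{u_+}A u(0)=\tfrac{1}{u_+}\Abar u$, a convex combination with strictly positive weights of exactly the columns of $A$ lying on the face $F:=P_\Abar$; thus $A\hat p$ lies in the relative interior of $F$. By the stratification of the non-negative part of a toric model by the faces of its polytope, any point of $\overline{\Mcal_A}$ whose vector of sufficient statistics lies in the relative interior of $F$ is supported on the columns on $F$, so $\hat p_j=0$ for $j>s$. Hence $\hat q:=(\hat p_1,\dots,\hat p_s)$ satisfies the binomials defining $I_\Abar$ (which lie among those of $I_A$), has coordinate sum $1$, and---its sufficient statistics $\Abar\hat q=\tfrac{1}{u_+}\Abar u$ lying in the relative interior of $P_\Abar$---is strictly positive, so $\hat q\in\Mcal_\Abar$; since it also satisfies $\Abar u=u_+\,\Abar\hat q$, Birch's Theorem~\ref{thm:Birch} shows $\hat q$ is the MLE of $\Mcal_\Abar$ for $u$. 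With the previous step, $\Psibar_k(u)$ is the $k$th coordinate of this MLE for every $k\le s$; since $u$ was generic and $\Psibar$ is a fixed rational map, $\Mcal_\Abar$ has rational MLE given by the Horn pair $(\Bbar,\bar h)$.

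I expect the main obstacle to be the identification of $\hat p$ as supported on $F$: this is exactly where the facial hypothesis is used, and it rests on the fact that maximum likelihood estimation degenerates onto precisely the face whose relative interior contains the empirical sufficient statistics---equivalently, the orbit stratification of the non-negative toric variety $\overline{\Mcal_A}$---which must be quoted or argued separately. The other ingredients are routine but need care: that the zero rows of $\Bbar$ contribute only trivial factors to $\Psibar_k$ for $k\le s$, that $\Psi_j(u(\epsilon))$ really has the stated monomial-times-unit form in $\epsilon$, and the small trick that boundedness of the $\Delta_{r-1}$-valued map $\epsilon\mapsto\Psi(u(\epsilon))$ already forces $e_j\ge0$, so that the limit $\hat p$ exists with no appeal to continuity of the extended MLE.
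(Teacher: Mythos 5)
Your proposal is correct, and its skeleton is the same as the paper's: perturb the data as $(u,\epsilon v)$ with $\epsilon\to 0^+$, apply Birch's Theorem \ref{thm:Birch} to $\Mcal_A$ along the path, show the first $s$ coordinates of $\Psi$ converge to $\Psibar(u)$ and the remaining coordinates converge to $0$, and then apply Birch again to $\Mcal_\Abar$ (this is exactly the content of Lemmas \ref{lem:InVariety} and \ref{lem:SuffStats}). The genuine difference is in the one step you flag as the main obstacle: where you invoke the face stratification of the nonnegative toric variety (a point of $\overline{\Mcal_A}\cap\overline{\Delta_{r-1}}$ whose sufficient statistics lie in the relative interior of the face $P_\Abar$ is supported on the columns of that face), the paper never needs this general fact because of its standing normalization that $A$ is a slack matrix of $P_A$: the facet inequality defining (a facet containing) $P_\Abar$ is then literally a row $\boldsymbol\alpha$ of $A$ with $\alpha_j=0$ for $j\le s$ and $\alpha_j\ge 1$ for $j>s$, so Birch gives $\boldsymbol\alpha\cdot\Psi(u^{(i)})\to 0$ and positivity of the coordinates squeezes $\Psi_k(u^{(i)})\to 0$ for $k>s$ (iterating over a saturated chain of faces when $P_\Abar$ is not a facet); the same row-based argument also yields strict positivity of the limit via the explicit Horn factors, where you instead reuse the stratification. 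So your route is slightly more general (it does not use the slack-matrix reparametrization at all) at the price of quoting an external result, which you should back with a citation (e.g.\ the moment-map/orbit description of the nonnegative toric variety, or the treatment of extended MLEs in \cite{geiger2006,rauh2011}) or replace by the paper's elementary facet-row squeeze. A genuine improvement on your side is the explicit $\epsilon$-order analysis of the Horn map, $\Psi_j(u(\epsilon))=\epsilon^{e_j}g_j(\epsilon)$ with $e_j\ge 0$ forced by boundedness, which establishes existence of the limit; the paper's Lemma \ref{lem:SuffStats} is somewhat terser on this point. Two small things to make explicit: that the column sums of $\Bbar$ are still zero, so $(\Bbar,\bar h)$ is indeed a Horn pair, and that (as the paper also tacitly assumes) no column $\bfa_j$ with $j>s$ lies on the face $P_\Abar$, which is what lets you conclude $\hat p_j=0$ precisely for $j>s$.
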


In order to prove Theorem \ref{thm:FacialSubsetMLE}, we check the conditions of Birch's theorem. 
We do this using the following Lemmas.

\begin{lemma}\label{lem:InVariety}
Let $\Psibar$ be as in Theorem \ref{thm:FacialSubsetMLE}. 
Then for generic $\ubar \in \R_{\geq 0}^s$, $\Psibar(\ubar)$ is defined.
In this case,  $\Psibar(\ubar)$ is in the
Zariski closure of $\Mcal_{\Abar}$.
\end{lemma}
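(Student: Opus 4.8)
The plan is to verify directly that $\Psibar(\ubar)$, wherever it is defined, annihilates every binomial generator of the toric ideal $I_{\Abar}$; by the cited Proposition these generators are the $p^a - p^b$ with $a,b \in \N^s$ and $\Abar a = \Abar b$. Write $\ell_i(u) = \sum_{j=1}^r b_{ij}u_j$ and $\bar{\ell}_i(\ubar) = \sum_{j=1}^s b_{ij}\ubar_j$, so that $\Psi_k(u) = h_k \prod_{i=1}^d \ell_i(u)^{b_{ik}}$ and, for $k \le s$, $\Psibar_k(\ubar) = h_k\prod_{i=1}^d\bar{\ell}_i(\ubar)^{b_{ik}}$. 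First I would record the elementary but crucial observation that for $k \le s$ the rational function $\Psi_k$ involves only those $\ell_i$ with $b_{ik}\ne 0$, and for every such $i$ the $i$th row of $\overline{B}$ contains the nonzero entry $b_{ik}$ in a column $\le s$, so $\bar{\ell}_i$ is a nonzero linear form on $\R^s$. Hence each $\Psi_k$ with $k\le s$ restricts to the coordinate subspace $W = \{u \in \C^r : u_j = 0 \text{ for all } j > s\}$ as a genuine rational function, and that restriction equals $\Psibar_k$. Since $\Psibar_k$ is thus a ratio of products of nonzero linear forms times the nonzero scalar $h_k$, its indeterminacy locus is a proper Zariski-closed subset of $\R^s$, giving the first assertion of the lemma.

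For the second assertion, fix a generator $p^a - p^b$ of $I_{\Abar}$ and extend $a,b$ by zeros to $a',b' \in \N^r$ supported on $\{1,\dots,s\}$. Because the first $s$ columns of $A$ are exactly the columns of $\Abar$, we get $Aa' = \Abar a = \Abar b = Ab'$, so $p^{a'} - p^{b'} \in I_A$ and every point of $\overline{\Mcal_A}$ is a zero of it. Since $\Psi$ maps dominantly into $\overline{\Mcal_A}$, the identity $\Psi(u)^{a'} = \Psi(u)^{b'}$ holds throughout the dense domain of $\Psi$, hence as an identity of rational functions on $\C^r$. Both sides involve only the coordinates $\Psi_k$ with $k\le s$, so by the first paragraph they restrict to $W$, and specializing $u_j \mapsto 0$ for $j > s$ yields $\Psibar(\ubar)^a = \Psibar(\ubar)^b$ as rational functions of $\ubar$. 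Therefore, for generic $\ubar\in\R_{\ge 0}^s$ the point $\Psibar(\ubar)$ is defined, lies in $(\C^*)^s$, and is a zero of every binomial generator — hence of every element — of $I_{\Abar}$, so it lies in $V(I_{\Abar})$, the Zariski closure of $\Mcal_{\Abar}$.

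The one place requiring care, and the step I expect to be the main obstacle, is legitimizing the specialization $u_j\mapsto 0$ for $j>s$: a priori the full map $\Psi$, with all $r$ of its coordinates, may be undefined along the entire subspace $W$, so one cannot simply ``evaluate $\Psi$ on $W$.'' This is exactly why the observation in the first paragraph is needed — after deleting the exponent-zero factors, the particular identity $\Psi(u)^{a'} = \Psi(u)^{b'}$ features only linear forms $\ell_i$ whose restriction $\bar{\ell}_i$ to $W$ is nonzero, so this identity of rational functions genuinely restricts to $W$ and its restriction is the desired identity for $\Psibar$. I would present this as a short internal remark rather than invoking anything heavier, and everything else (ideal membership, genericity as ``outside a proper closed subset'') is routine.
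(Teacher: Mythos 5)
Your proof is correct and follows essentially the same route as the paper: the key step in both is the observation that for $k\le s$ every factor of $\Psi_k$ appearing with nonzero exponent contains the summand $b_{ik}u_k$ with $k\le s$, so $\Psi_k$ restricts to the coordinate subspace $\{u_j=0 \text{ for } j>s\}$ as a well-defined rational function equal to $\Psibar_k$, which gives genericity of the domain and lets relations from $I_A$ descend. The only cosmetic difference is that the paper evaluates an arbitrary $f\in I_{\Abar}\subseteq I_A$ at the zero-extension of $\ubar$, whereas you pass through the binomial generators of $I_{\Abar}$ (lifted into $I_A$ by extending exponent vectors by zero) and argue via identities of rational functions; this is the same argument with slightly more explicit bookkeeping at exactly the point you flag.
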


\begin{proof}
Let $u \in \R^r_{\geq 0}$ be given by $u_i = \ubar_i$ if $i \leq s$ and $u_i = 0$ if $i > s$.
We claim that when $\Psi(u)$ is defined,
$\Psibar_k(\ubar) = \Psi_k(u)$ for $k \leq s$.
Indeed, each factor of $\Psi_k(u)$ is of the form
\[
\big( \sum_{j =1}^r b_{ij} u_j \big)^{b_{ik}}
\]
for each $i =1, \dots, d$. If the $i$th factor of $\Psi_k$ is not
identically equal to one, then $b_{ik} \neq 0$.
So the $i$th factor has the nonzero summand $b_{ik}u_k$ and is generically nonzero
when evaluated at a point $u$ of the given form.
In particular, this implies that $\Psi_k(u)$ is defined for a generic $u$
of the given form since having $u_j = 0$ for $j > s$ does not make any factor of $\Psi_k$ identically equal to zero.
Setting each $b_{ij} = 0$ when $j > s$ gives that $\Psibar_k(\ubar) = \Psi_k(u)$ when $k \leq s$.

The elements of $I_{\Abar}$ are those elements of $I_A$ that belong to the polynomial ring $k[p_1, \dots, p_s]$. Let $f \in I_{\Abar}$.
Since $f \in I_A$ as well, $f(\Psibar(\ubar)) = f(\Psi(u)) = 0$, as needed.
\end{proof}

Next we check that the sufficient statistics $\Abar \ubar / \ubar_+$ are equal to those of $\Psibar(\ubar)$.

\begin{lemma}\label{lem:SuffStats}
Let $\cbar$ be a row of $\Abar$. Then
\[
\frac{\cbar \cdot \ubar}{\ubar_+} = \cbar \cdot \Psibar(\ubar).
\]
\end{lemma}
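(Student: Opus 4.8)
The plan is to verify the sufficient-statistic equation for $\Psibar(\ubar)$ by transporting the corresponding equation for the maximum likelihood estimator of $\Mcal_A$ along the "extension by zeros" used in the proof of Lemma~\ref{lem:InVariety}. Write the row $\cbar$ of $\Abar$ as $\cbar=(a_{i1},\dots,a_{is})$, the restriction to the first $s$ coordinates of the $i$-th row $a_i$ of $A$, and let $u\in\R^r_{\ge 0}$ be the extension of $\ubar$ by zeros, so $u_j=\ubar_j$ for $j\le s$ and $u_j=0$ for $j>s$. By Theorem~\ref{thm:Huh}, $\Psi$ is the maximum likelihood estimator of $\Mcal_A$, so by Birch's Theorem~\ref{thm:Birch} the identity $Au'=u'_+\,A\Psi(u')$ holds for generic $u'\in\R^r_{\ge 0}$; since the left side is linear and the right side is rational in $u'$, this is an identity of rational functions, and once we know $\Psi$ is defined at $u$ for generic $\ubar$ we may evaluate it there. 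Because $u_j=0$ for $j>s$ we get $a_i\cdot u=\cbar\cdot\ubar$ and $u_+=\ubar_+$, and because $\Psi_k(u)=\Psibar_k(\ubar)$ for $k\le s$ (this equality is established in the proof of Lemma~\ref{lem:InVariety}) we get $a_i\cdot\Psi(u)=\cbar\cdot\Psibar(\ubar)+\sum_{k>s}a_{ik}\Psi_k(u)$. Hence the lemma follows once we show $\Psi_k(u)=0$ for every $k>s$ and generic $\ubar$.

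This is where the hypothesis that $\Abar$ is a facial submatrix enters. Assume $P_{\Abar}$ is a proper face (otherwise $\Abar=A$ and there is nothing to prove), and let $T\subseteq[n]$ index the facets of $P_A$ containing $P_{\Abar}$. Because the columns $\bfa_1,\dots,\bfa_s$ lie on $P_{\Abar}$, they lie on $F_i$ for every $i\in T$, so $a_{ik}=0$ for all $i\in T$ and all $k\le s$. Because $P_{\Abar}$ is the intersection of the facets of $P_A$ that contain it, for each $k>s$ the column $\bfa_k$ lies off at least one facet $F_i$ with $i\in T$, so $a_{ik}\ge 1$ for that $i$ and hence $\sum_{i\in T}a_{ik}\ge 1$. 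Summing the Birch identities $a_i\cdot u=u_+\,(a_i\cdot\Psi(u))$ over $i\in T$, the left side vanishes since $u_j=0$ for $j>s$ and $a_{ij}=0$ for $j\le s$, $i\in T$; dividing by $u_+=\ubar_+\ne 0$ gives the relation $\sum_{k>s}\big(\sum_{i\in T}a_{ik}\big)\Psi_k(u)=0$, whose coefficients are all strictly positive.

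To finish it suffices to know $\Psi_k(u)\ge 0$ for every $k>s$, for then the displayed relation forces $\Psi_k(u)=0$. Since $\Psi$ is the maximum likelihood estimator of $\Mcal_A$, each coordinate $\Psi_k$ takes values in the open interval $(0,1)$ on the open positive orthant; being bounded there, $\Psi_k$ cannot blow up as one approaches the boundary, which both shows that $\Psi_k$ is defined at $u$ for generic $\ubar$ (the denominator of $\Psi_k$ cannot vanish identically on the coordinate subspace $\{x_{s+1}=\dots=x_r=0\}$) and, by continuity along a path into the open orthant, gives $\Psi_k(u)\in[0,1]$. I expect this last step --- controlling the Horn map at a boundary point of the orthant, i.e.\ showing $\Psi$ is defined there with nonnegative coordinates --- to be the only real obstacle; the rest is bookkeeping with the slack-matrix description of $A$ and the Birch equations. (As a consistency check, once $\Psi_k(u)=0$ for $k>s$ the same identity yields $\sum_{k\le s}\Psibar_k(\ubar)=\sum_{k=1}^r\Psi_k(u)=1$, matching $\mathbf 1\in\rowspan(\Abar)$.)
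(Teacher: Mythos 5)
Your overall strategy is the same as the paper's: extend $\ubar$ by zeros, use Birch's theorem for the big model $\Mcal_A$, identify $\Psi_k$ with $\Psibar_k$ for $k\le s$ via the argument of Lemma \ref{lem:InVariety}, and use the slack-matrix rows of the facets cut out by the face together with positivity to kill the coordinates $k>s$. (Your device of summing the Birch equations over \emph{all} facets containing $P_{\Abar}$ is a pleasant alternative to the paper's reduction to the facet case via a saturated chain; both versions implicitly use that $\Abar$ contains every column of $A$ lying on the face.) The genuine gap is in the step you yourself flag as the only obstacle: you evaluate the rational identity $Au'=u'_+A\Psi(u')$ \emph{at} the boundary point $u$, which requires every coordinate $\Psi_k$, including those with $k>s$, to be regular at $u$, and your justification --- that $\Psi_k$ is bounded on the positive orthant, hence no denominator factor of $\Psi_k$ can vanish identically on the subspace $\{u_{s+1}=\dots=u_r=0\}$ --- is not a valid implication. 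A Horn coordinate can have a negative-exponent linear factor supported entirely on the coordinates $j>s$ that is compensated by positive-exponent factors also vanishing on that subspace: for instance $(u_2+u_3)^2/(u_2+2u_3)$ is bounded (indeed tends to $0$) as one approaches $\{u_2=u_3=0\}$ inside the positive orthant, yet it is not regular at such points and its formula evaluates to the undefined expression $0/0$ there. So boundedness gives no blow-up, but it does not give definedness at $u$, and without definedness the evaluation of the identity at $u$ --- and with it your steps deducing $\Psi_k(u)=0$ for $k>s$ --- breaks down.

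The repair is exactly the paper's move: never evaluate at $u$, but approach it along a sequence $u^{(i)}$ of generic strictly positive points (first $s$ coordinates equal to $\ubar$, the rest tending to $0$). At each $u^{(i)}$ the Birch identity holds and $\Psi_k(u^{(i)})>0$; your facet-sum relation then becomes $\sum_{k>s}\bigl(\sum_{i\in T}a_{ik}\bigr)\Psi_k(u^{(i)})\to 0$ with strictly positive coefficients, which squeezes each $\Psi_k(u^{(i)})\to 0$ for $k>s$ without ever needing $\Psi_k$ to be defined at $u$, while $\Psi_k(u^{(i)})\to\Psibar_k(\ubar)$ for $k\le s$ by the Lemma \ref{lem:InVariety} argument. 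Passing to the limit in $\bfc\cdot\Psi(u^{(i)})=\bfc\cdot u^{(i)}/u^{(i)}_+$ then yields the lemma. With that substitution your argument is correct and essentially coincides with the paper's proof.
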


\begin{proof}
Let $\bfc$ be the row of $A$ corresponding to $\cbar$.
Define a sequence $u^{(i)} \in \R^r_{\geq 0}$ by
\[
u_j^{(i)} = \begin{cases}
\ubar_j & \text{ if } j \leq s \\
\epsilon^{(i)}_j & \text{ if } j > s,
\end{cases}
\]
where $\lim_{i \rightarrow \infty} \epsilon^{(i)}_j = 0$ for each $j$.
We choose each $\epsilon^{(i)}_j > 0$ generically so that $\Psi(u^{(i)})$ is defined for all $i$.

Since $\ubar$ is generic, we have that $\lim_{i \rightarrow \infty} u_+^{(i)} = \ubar_+ \neq 0$.
Similarly, we have that $\lim_{i\rightarrow\infty} \bfc \cdot u^{(i)} = \cbar \cdot \ubar$.
So 
\[
\lim_{i\rightarrow\infty} \frac{\bfc \cdot u^{(i)}}{u^{(i)}_+} = \frac{\cbar \cdot \ubar}{\ubar_+}.
\]

Since $\Psi(u^{(i)})$ is the maximum likelihood estimate in $\Mcal_A$ for  each $u^{(i)}$,
by Birch's theorem we have that
\begin{align*}
\frac{\bfc \cdot u^{(i)}}{u^{(i)}_+} &= \bfc \cdot \Psi(u^{(i)}) \\
&= \sum_{i=1}^s c_j \Psi_j(u^{(i)}) +  \sum_{j = s+1}^r c_j \Psi_j(u^{(i)}).
\end{align*}
By the arguments in the proof of Lemma \ref{lem:InVariety}, when $k \leq s$,
no factor of $\Psi_k(u^{(i)})$ involves only summands $u_j^{(i)}$ for $j > s$.
So $\lim_{i\rightarrow\infty}\Psi_k(u^{(i)}) = \Psibar_k(\ubar)$.

Finally, we claim that for $k > s$, $\lim_{i \rightarrow \infty} \Psi_k(u^{(i)}) = 0$.
Without loss of generality, we may assume that $P_{\Abar}$ is a facet of $P_A$.
Indeed, if it were not, we could simply iterate these arguments over a
saturated chain of faces between $P_{\Abar}$ and $P_A$ in the face lattice of $P_A$.
Let $\boldsymbol\alpha = (a_1, \dots, a_r)$ be the row of $A$ corresponding to the facet $P_{\Abar}$ of $P_A$.
Then $a_j= 0$ if $j \leq s$ and $a_j \geq 1$ if $j > s$.
Since $\Psi(u^{(i)})$ is the maximum likelihood estimate in $\Mcal_A$ for $u^{(i)}$,
by Birch's theorem we have that
\begin{align*}
\boldsymbol\alpha \cdot \Psi(u^{(i)}) &= \frac{1}{u^{(i)}_+} (a_{s+1} u^{(i)}_{s+1} + \dots + a_r u^{(i)}_r) \\
&= \frac{1}{u^{(i)}_+} (a_{s+1} \epsilon^{(i)}_{s+1} + \dots + a_r \epsilon^{(i)}_r).
\end{align*}
Since $\ubar_+ \neq 0$, we also have that
\begin{align*}
\lim_{i \rightarrow \infty} \boldsymbol\alpha \cdot \Psi(u^{(i)}) 
& = \lim_{i \rightarrow \infty} \frac{1}{u^{(i)}_+} (a_{s+1} \epsilon^{(i)}_{s+1} + \dots + a_r \epsilon^{(i)}_r) \\
&= \frac{1}{\ubar_+} \lim_{i\rightarrow\infty}(a_{s+1} \epsilon^{(i)}_{s+1} + \dots + a_r \epsilon^{(i)}_r) \\
&= 0.
\end{align*}

Furthermore, for all $i$ and $k$, $\Psi_k(u^{(i)}) > 0$.
So $\lim_{i \rightarrow \infty}\Psi_k(u^{(i)}) \geq 0$. Since each $a_i > 0$ for $i > s$, 
this implies that $\lim_{i \rightarrow \infty} \Psi_k(u^{(i)}) = 0$ for all $k > s$.

So we have that
\begin{align*}
\frac{\cbar \cdot \ubar}{\ubar_+} &= \lim_{i \rightarrow \infty} \frac{\bfc \cdot u^{(i)}}{u^{(i)}_+} \\
&= \lim_{i \rightarrow\infty} \bfc \cdot \Psi(u^{(i)}) \\
&= \cbar \cdot \Psibar(\ubar) + \sum_{j=s+1}^r c_j \big( \lim_{i \rightarrow \infty} \Psi_j(u^{(i)}) \big) \\
&= \cbar \cdot \Psibar(\ubar),
\end{align*}
as needed.
\end{proof}

\begin{proof}[Proof of Theorem \ref{thm:FacialSubsetMLE}]
First, note that $\Psibar$ is still a rational function of degree zero since deleting columns of $B$ does not affect the remaining column sums.
So $(\Bbar, \overline{\bfh})$ is a Horn pair.

By Lemma \ref{lem:InVariety}, we have that $\Psibar(\ubar) \in \overline{\Mcal_{\Abar}}$.
Since $\mathbf{1} \in \mathrm{rowspan}(\Abar)$, it follows from Lemma \ref{lem:SuffStats}
that $\sum_{k=1}^s \Psibar_k(\ubar) = 1$.
Defining a sequence $\{u^{(i)} \}_{i=1}^{\infty}$ as in the proof of Lemma \ref{lem:SuffStats},
we have that $\Psibar_k(\ubar) = \lim_{i \rightarrow\infty} \Psi_k(u^{(i)})$.
So $\Psibar_k(\ubar) \geq 0$ since each $\Psi_k(u^{(i)}) > 0$.
Furthermore, for generic choices of $\ubar$, we cannot have $\Psibar_k(\ubar) = 0$.
Indeed, for $k \leq s$, the $i$th factor of $\Psibar_k(\ubar)$ has nonzero summand $b_{ik} u_k$.
So none of these factors is zero for generic choices of $u$ of the given form.
Therefore $\Psibar(\ubar) \in \Mcal_{\Abar} = \overline{\Mcal_{\Abar}} \cap \Delta_{s-1}$. 

By Lemma \ref{lem:SuffStats},
\[
\frac{\Abar \cdot \ubar}{\ubar_+} = \Abar \cdot \Psibar(\ubar).
\]
So by Birch's theorem, $\Psibar$ is the maximum likelihood estimator for $\Mcal_{\Abar}$.
\end{proof}

Note that $\Psibar$ is a dominant map. Indeed, for generic $p \in \Mcal_{\Abar}$,
$\Psibar(p)$ is defined. Since $p$ is a probability distribution, $p_{+} = 1$.
By Birch's Theorem, $p$ is the MLE for data vector $p$.
So $\Psibar(p) = p$.

We close this section by noting that we believe that a natural 
generalization of Theorem \ref{thm:FacialSubsetMLE} is also true.

\begin{conj} 
Let $A \in \Z^{n \times r}$ and $\Abar \in \Z^{n \times s}$ a facial submatrix of $A$.
Then the maximum likelihood degree of $\Mcal_A$ is greater than or equal to the
maximum likelihood degree of $\Mcal_\Abar$.
\end{conj}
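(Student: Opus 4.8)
We outline a strategy for proving this conjecture, extending the degeneration behind Lemma~\ref{lem:SuffStats} and Theorem~\ref{thm:FacialSubsetMLE} from the MLE to \emph{all} complex critical points of the log-likelihood. Write $\mld(\Mcal)$ for the maximum likelihood degree. First I would reduce to the case that $P_\Abar$ is a facet of $P_A$, iterating over a saturated chain of faces between $P_\Abar$ and $P_A$ exactly as in Lemma~\ref{lem:SuffStats}. Passing to the normalization of $\overline{\Mcal_A}$ changes neither side, since it is an isomorphism over the dense torus and $\mld(\Mcal_A)$ depends only on the open subvariety $\overline{\Mcal_A}\cap\{p_1\cdots p_r\ne 0,\ p_1+\dots+p_r\ne 0\}$ (cf.\ \cite{hs2014,huh2014}); so I may assume $\overline{\Mcal_A}$ is the normal projective toric variety of $P_A$. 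Such a variety is smooth along the codimension-one orbit $O_F$ attached to the facet $F$, and $\overline{\Mcal_\Abar}$ is the orbit closure $\overline{O_F}$, with dense torus $O_F$.

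Fix generic data $\ubar\in\R^s_{>0}$ for $\Mcal_\Abar$, so the log-likelihood $\ell(\,\cdot\mid\ubar)$ has exactly $N:=\mld(\Mcal_\Abar)$ critical points $q_1,\dots,q_N\in O_F$, all nondegenerate and off the arrangement $\{p_1\cdots p_s\ne0,\ p_1+\dots+p_s\ne0\}$. Near $q_k$ choose local analytic coordinates $(x,y)$ on $\overline{\Mcal_A}$ with $\overline{O_F}=\{y=0\}$; the slack-matrix hypothesis gives $p_j=U_j(x,y)$ with $U_j(x,0)\ne0$ for the columns $j\le s$ on $F$, and $p_j=y^{a_{F,j}}U_j(x,y)$ with $a_{F,j}\ge1$ and $U_j(x,0)\ne0$ for the columns $j>s$ off $F$. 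Extend $\ubar$ to the data line $u(\epsilon)=u_0+\epsilon v$ with $u_0=(\ubar,0)\in\R^r$ and $v$ generic, $v_j>0$ for $j>s$, and set $C:=\sum_{j>s}v_j\,a_{F,j}\ne0$. Substituting, the likelihood equations for $u(\epsilon)$ separate near $q_k$ into a tangential part $\partial_x\ell_0+\epsilon\,\partial_x g=0$ and a normal part $\partial_y\ell_0+\epsilon(\partial_y g+C/y)=0$, where $\ell_0(x,y)$ restricts on $\{y=0\}$ to the log-likelihood of $\Mcal_\Abar$ and $g$ is analytic. Now rescale $y=\epsilon\eta$: at $\epsilon=0$ the system in $(x,\eta)$ reads $\partial_x\ell_0(x,0)=0$ and $\partial_y\ell_0(x,0)+C/\eta=0$, whose solution $x=q_k$ (simple, as $q_k$ is nondegenerate) and $\eta_k=-C/\partial_y\ell_0(q_k,0)$ (nonzero for generic $\ubar$) has block-triangular Jacobian in $(x,\eta)$ with invertible blocks $\mathrm{Hess}_x\ell_0(q_k,0)$ and $-C/\eta_k^2$. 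By the implicit function theorem there is an analytic branch $\epsilon\mapsto p^{(k)}(\epsilon)$ of critical points of $\ell(\,\cdot\mid u(\epsilon))$ on $\overline{\Mcal_A}$, off the arrangement, with $p^{(k)}(\epsilon)\to q_k$ as $\epsilon\to0$.

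To conclude, let $U\subset\C$ be the dense open set of $\epsilon$ for which $u(\epsilon)$ is generic data for $\Mcal_A$ (such $\epsilon$ exist for generic $v$), and let $C^\circ\subset\overline{\Mcal_A}\times U$ consist of the pairs $(p,\epsilon)$ with $p$ a critical point of $\ell(\,\cdot\mid u(\epsilon))$ off the arrangement; then $C^\circ\to U$ is finite of degree $\mld(\Mcal_A)$. Its closure $\overline{C^\circ}$ in $\overline{\Mcal_A}\times\P^1$ is a reduced curve all of whose components dominate $\P^1$, so $\overline{C^\circ}\to\P^1$ is a finite morphism of degree $\mld(\Mcal_A)$; being a finite map from a Cohen--Macaulay curve to a regular curve it is flat, hence its fiber over $\epsilon=0$ has at most $\mld(\Mcal_A)$ points. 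By the previous paragraph that fiber contains $N$ distinct points lying over $q_1,\dots,q_N$, whence $\mld(\Mcal_\Abar)=N\le\mld(\Mcal_A)$.

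The delicate step is the transverse analysis at $q_k$. One must guarantee the local normal form for the $p_j$ — which is why one normalizes and works along the smooth codimension-one orbit $O_F$, though extra care is needed when $P_\Abar$ is a lower face and the saturated chain passes through singular strata — and one must control $\partial_y\ell_0(q_k,0)$, showing it is generically nonzero or, failing that, replacing $y=\epsilon\eta$ by the appropriate Puiseux substitution. One also needs the genericity of $\ubar$ for $\Mcal_\Abar$ and of $v$ for $\Mcal_A$ to be arranged compatibly. Granting these points, the same argument shows that every critical point of $\Mcal_\Abar$ is a genuine boundary limit of critical points of $\Mcal_A$.
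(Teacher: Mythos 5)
First, a point of record: the paper does not prove this statement — it is stated as a conjecture (the authors prove only the ML-degree-one case, Theorem \ref{thm:FacialSubsetMLE}, and explicitly leave the general inequality open). So there is no proof of yours to compare with theirs; I can only assess your argument on its own terms.

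Your second half is fine: once you know that each critical point $q_k$ of $\Mcal_{\Abar}$ is a limit of critical points of $\Mcal_A$ along the data line $u(\epsilon)$, the closure of the critical curve is reduced, finite and flat over $\P^1$ of degree $\mld(\Mcal_A)$, and the fiber over $\epsilon=0$ bounds $N$ from above. The genuine gap is the first half, which is exactly the heart of the conjecture. Your implicit-function-theorem construction of a branch through $q_k$ requires (i) nondegeneracy of $q_k$ as a critical point of $\ell_0(\cdot,0)$ for generic $\ubar$, which you assert but do not establish (it is plausible via generic reducedness of the likelihood correspondence, but it must be argued, especially after the birational identification of $\overline{O_F}$ with a model of $\overline{\Mcal_{\Abar}}$), and (ii) $\partial_y\ell_0(q_k,0)\neq 0$, which you concede may fail and propose to repair with ``the appropriate Puiseux substitution.'' That is precisely the hard case: if $\partial_y\ell_0(q_k,0)=0$, the weighted substitutions may only yield branches with $\eta\to 0$ or $\eta\to\infty$, i.e.\ branches that collapse onto the boundary divisor or onto the arrangement $\{p_1\cdots p_r\,p_+=0\}$, and nothing in your argument rules out that \emph{no} admissible branch of interior critical points converges to $q_k$. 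Since the inequality reduces to the statement that every boundary critical point is such a limit, deferring this step means the conjecture is not proved. Smaller unfinished points — arranging the genericity of $\ubar$ for $\Mcal_{\Abar}$ compatibly with genericity of $u(\epsilon)$ for $\Mcal_A$, the local normal form $p_j=y^{a_{F,j}}U_j$ after normalization at each step of the saturated chain, and the claim that $\ell_0(\cdot,0)$ really is the likelihood of $\Mcal_{\Abar}$ under the torus identification — are fixable but need to be written out before this can be considered more than a promising strategy.
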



\section{Quasi-independence Models  with Non-Rational MLE}\label{sec:FacialSZ}

In this section, we show that when $S$ is not doubly chordal bipartite,
the ML-degree of $\Mcal_S$ is strictly greater than one.
We can apply Theorem \ref{thm:FacialSubsetMLE} to quasi-independence models
whose associated bipartite graphs are not doubly chordal bipartite using cycles and the following ``double square" structure.

\begin{figure}
\begin{center}
	\caption{The double-square graph associated to the matrix in Example \ref{ex:DoubleSquare}}
	 \label{Fig:DoubleSquare}
	\begin{tikzpicture}
		\draw(1,1) -- (0,1)--(0,0)--(2,0)--(2,1)--(1,1)--(1,0);
		\draw [fill] plot [only marks, mark=square*] coordinates {(0,0) (1,1) (2,0)};
		\draw [fill = white] plot [only marks, mark size=2.5, mark=*] coordinates { (0,1) (1,0) (2,1)};
		\node[below] at (0,0) {1};
		\node[above] at (1,1) {2};
		\node[below] at (2,0) {3};
		\node[above] at (0,1) {1};
		\node[below] at (1,0) {2};
		\node[above] at(2,1){3};
	\end{tikzpicture}
	\end{center}
\end{figure}
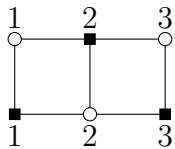
\begin{ex}\label{ex:DoubleSquare}
The minimal example of a chordal bipartite graph that is not doubly chordal bipartite is the \emph{double-square graph}.
The matrix of the double-square graph has the form
\[
\begin{bmatrix}
\star & \star & 0 \\
\star & \star & \star \\
0 & \star & \star
\end{bmatrix},
\]
or any permutation of the rows and columns of this matrix. The resulting graph, pictured in Figure \ref{Fig:DoubleSquare} is two squares joined along an edge.
This is a 6-cycle with exactly one chord and as such, is not doubly chordal bipartite.

\end{ex}

\begin{rmk}
A bipartite graph is doubly chordal bipartite if and only if it is chordal bipartite and does not have the double-square graph as an induced subgraph.
\end{rmk}

We now compute the maximum likelihood degree of models associated to the double square and to cycles of length greater than or equal to 6.

\begin{prop}\label{prop:DSMLdeg}
The maximum likelihood degree of the quasi-independence model 
whose associated graph is the double square is 2.
\end{prop}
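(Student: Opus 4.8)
The plan is to compute the maximum likelihood degree directly by counting the critical points of the log-likelihood function on the double-square model, using the toric parametrization $\phi^S$. Write $S = \{(1,1),(1,2),(2,1),(2,2),(2,3),(3,2),(3,3)\}$, so the model is parametrized by $p_{ij} = s_i t_j / Z$ where $Z = \sum_{(i,j)\in S} s_i t_j$ is the normalizing constant. The critical equations for $\ell(p\mid u) = \sum_{(i,j)\in S} u_{ij}\log p_{ij}$ on $\Mcal_S$, after Lagrange/Birch reduction, are equivalent to the system $A(S) u = u_{++} A(S) p$ together with $p \in \overline{\Mcal_S}$; equivalently, the row sums and column sums of the matrix $(p_{ij})$ indexed by $S$ must match the normalized row and column sums of $(u_{ij})$, and the entries must satisfy the toric (quasi-independence) binomial relations. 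For the double square there is a single nontrivial binomial: the $2\times 2$ minor on the rows $\{1,2\}$ and columns $\{1,2\}$, namely $p_{11}p_{22} = p_{12}p_{21}$, and similarly the $2\times 2$ minor on rows $\{2,3\}$, columns $\{2,3\}$, $p_{22}p_{33} = p_{23}p_{32}$ — but these two are not independent once the margin constraints are imposed, and one should check that the ideal $I_{A(S)}$ is generated by exactly these adjacent-minor binomials.

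First I would set up the margin equations explicitly. Normalizing so that $u_{++}=1$, the constraints are: $p_{11}+p_{12}=u_{1+}$ (row 1 sum, where only columns $1,2$ are present), $p_{21}+p_{22}+p_{23}=u_{2+}$, $p_{32}+p_{33}=u_{3+}$, and the three column equations $p_{11}+p_{21}=u_{+1}$, $p_{12}+p_{22}+p_{32}=u_{+2}$, $p_{23}+p_{33}=u_{+3}$, of which one is redundant. Together with the two quasi-independence binomials, this is a square system in the seven unknowns $p_{ij}$. I would eliminate variables: use the five independent linear margin equations to express five of the $p_{ij}$ in terms of, say, $p_{11}$ and $p_{33}$ (or one well-chosen pair), substitute into the two binomial equations, and count the solutions of the resulting polynomial system in two variables. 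I expect the first binomial $p_{11}p_{22}=p_{12}p_{21}$ to become, after substitution, a single quadratic in the two chosen variables, and the second binomial to become a linear (or another quadratic) relation; Bézout then gives the count, and I would need to verify that the relevant resultant has degree exactly $2$ and has no spurious solutions at infinity or on the boundary coordinate hyperplanes — the genuine critical points of $\ell$ all have $p_{ij}\neq 0$.

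An alternative, cleaner route I might use instead: it is already shown (Proposition~\ref{prop:DSMLdeg} is in the section applying Theorem~\ref{thm:FacialSubsetMLE}) that the double-square model is \emph{not} rational, hence by Huh's theorem (Theorem~\ref{thm:Huh}) its ML-degree is at least $2$. So the real content is the \emph{upper} bound: the ML-degree is \emph{at most} $2$. For this I would exhibit, for a specific generic integer data vector $u$ (or a symbolic one with a chosen specialization), that the reduced elimination ideal has exactly two complex solutions, and invoke that the ML-degree is the generic fiber cardinality (Huh--Sturmfels, cited in the excerpt) so that one explicit generic count suffices. Concretely: pick random integer margins, solve the $2$-variable system, observe two solutions.

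The main obstacle will be organizing the elimination so that the degree count is transparent and rigorously shows \emph{exactly} two solutions rather than ``at most a few'': one must be careful that after clearing denominators no extraneous factors (corresponding to $Z=0$ or to a vanishing margin) inflate the degree, and that the two solutions are distinct for generic $u$. I would handle this by checking the discriminant of the final quadratic is a nonzero polynomial in the $u_{ij}$, so it is nonzero for generic data, and by confirming via the structure of the margin equations that no solution with some $p_{ij}=0$ can satisfy all constraints for generic $u$. Combined with the lower bound from non-rationality, this pins the ML-degree at $2$.
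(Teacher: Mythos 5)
Your main route is essentially the paper's own proof: reduce via the marginal (Birch/score) equations to the two-parameter fiber of the sufficient-statistics map, substitute into the two fully observed $2\times 2$-minor binomials, and exploit the fact that both resulting quadratics share the same $\alpha\beta$-type leading term so that elimination yields a single degree-2 equation, then certify exactly two solutions off the coordinate and sum hyperplanes for generic data (the paper does this with an explicit integer specialization, you with a discriminant check). One caution: your ``alternative, cleaner route'' is circular in the context of the paper, since non-rationality of the double-square model is deduced \emph{from} this proposition (via Theorem \ref{thm:FacialSubsetMLE} and Theorem \ref{thm:NotDCB}), not the other way around; your primary computation, however, does not rely on it.
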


\begin{proof}
Without loss of generality, let
\[
S = \{11, 12, 21, 22, 23, 32,33\},
\]
so that $G_S$ is a double-square graph. Then the vanishing ideal of $\Mcal_S$ is the ideal
$I(\Mcal_S)\subset \C[p_{ij} \mid ij \in S]$ given by
\[
I(\Mcal_S) = \langle p_{11}p_{22} - p_{12}p_{21}, p_{22}p_{33} - p_{23}p_{32} \rangle.
\]
Define the hyperplane arrangement
\[
\Hcal := \{ p \in \C^S \mid p_{++} \prod_{ij \in S} p_{ij} = 0 \},
\]
where $p_{++}$ denotes the sum of all the coordinates of $p$. Then Proposition 7 of \cite{amendola2019}
implies that the ML-degree of $\Mcal_S$ is the number of solutions to the system
\[
I(\Mcal_S) + \langle A(S) u + u_+ A(S) p \rangle
\]
that lie outside of $\Hcal$ for generic $u$.
Since $A(S)$ encodes the row and column marginals of $u$, 
the MLE for $u$ can be written in matrix form as
\[
\begin{bmatrix} u_{11} + \alpha & u_{12} - \alpha & 0 \\
u_{21} - \alpha & u_{22} + \alpha + \beta & u_{23} - \beta \\
0 & u_{32} - \beta & u_{33} + \beta
\end{bmatrix}
\]
for some $\alpha$ and $\beta$.
So computing the MLE is equivalent 
to solving for $\alpha$ and $\beta$ in the system
\begin{align*}
(u_{11} + \alpha)(u_{22} + \alpha + \beta) - (u_{12} - \alpha) (u_{21} - \alpha) &= 0 \\
(u_{22} + \alpha + \beta)(u_{33} + \beta) - (u_{23} - \beta) (u_{32} - \beta) &= 0.
\end{align*}
Expanding gives two equations of the form
\begin{align}\label{eqn:DoubleSquareSystem}
\alpha \beta + c_1 \alpha + c_2 \beta + c_3 &= 0 \\
\alpha \beta + d_1 \alpha + d_2 \beta + d_3 &= 0, \nonumber
\end{align}
where each $c_i, d_i$ are polynomials in the entries of $u$.

Solving for $\alpha = -(c_2 \beta + c_3)/(\beta + c_1)$ in the first equation of (\ref{eqn:DoubleSquareSystem})
and substituting into the second gives a degree 2 function of $\beta$, which can have at most two solutions.
Indeed, for generic choices of $u$, this equation has exactly two solutions, neither of which lie on $\Hcal$.
For example, take $u_{11} = u_{12} = u_{21} = u_{22} = 1$ and $u_{23} = u_{32} = u_{33} = 2$.
By performing this substitution in (\ref{eqn:DoubleSquareSystem}) with these values for $u$,
we obtain the degree 2 equation
\begin{equation}\label{eqn:DoubleSquareFxn}
\frac{-\beta^2}{\beta+4} + \frac{7 \beta}{\beta + 4} + 2\beta - 2 = 0.
\end{equation}
After clearing denominators, we obtain that $\beta^2 + 13 \beta - 8 = 0$. 
This polynomial has two distinct roots neither of which lie on $\Hcal$,
and (\ref{eqn:DoubleSquareFxn}) is defined at both of these roots.
These are generic conditions on the data; so since there exists a $u$ for which
(\ref{eqn:DoubleSquareSystem}) has exactly two solutions,
the ML-degree of $\Mcal_S$ is 2.
\end{proof}

\begin{prop}\label{prop:CycleMLdeg}
Let $S_k \subset [k] \times [k]$ be a collection of indices such that $G_{S_k}$ is a cycle of length $2k$.
Then the ML-degree of $\Mcal_{S_k}$ is $k$ if $k$ is odd and $(k-1)$ if $k$ is even.
\end{prop}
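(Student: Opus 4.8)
The plan is to set up the MLE computation for the $2k$-cycle explicitly, reduce it to a polynomial system in a single variable, and then count solutions. Without loss of generality take $S_k = \{(i,i) : i \in [k]\} \cup \{(i, i+1) : i \in [k-1]\} \cup \{(k,1)\}$, so that $G_{S_k}$ is the $2k$-cycle. The vanishing ideal $I(\Mcal_{S_k})$ is generated by the $k$ ``square'' binomials $p_{i,i}p_{i+1,i+1} - p_{i,i+1}p_{i+1,i}$ (indices mod $k$). As in the proof of Proposition \ref{prop:DSMLdeg}, I would invoke Proposition 7 of \cite{amendola2019}: the ML-degree equals the number of solutions to $I(\Mcal_{S_k}) + \langle A(S_k) u - u_+ A(S_k) p \rangle$ lying off the arrangement $\Hcal = \{ p_{++} \prod p_{ij} = 0\}$, for generic $u$.

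Next I would parametrize the fiber of the marginal map. Since $A(S_k)$ records row and column sums, the space of matrices with structural-zero pattern $S_k$ and fixed marginals equal to those of $u$ is one-dimensional (the cycle has exactly one independent cycle, so the fiber has dimension $\#S_k - \mathrm{rank}\,A(S_k) = 2k - (2k-1) = 1$). Concretely, write $p_{i,i} = u_{i,i} + (-1)^i t$ and $p_{i,i+1} = u_{i,i+1} - (-1)^i t$ for a single scalar $t$ (with the sign pattern wrapping consistently around the even cycle). Substituting into the $k$ binomial generators turns the system into $k$ quadratic equations in the single unknown $t$, each of the form $q_\ell(t) = 0$ where $q_\ell$ has degree exactly $2$ and, crucially, the leading coefficient of each $q_\ell$ is $\pm 1$ (the coefficient of $t^2$ in $p_{i,i}p_{i+1,i+1} - p_{i,i+1}p_{i+1,i}$ is $(-1)^i(-1)^{i+1} - (-1)^{i+1}(-1)^i = -2$, up to the exact sign bookkeeping — the point is it is a nonzero constant independent of $u$). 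So the MLE is governed by the common roots of $k$ monic-up-to-scalar quadratics in $t$.

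The heart of the argument is the case split on parity, and this is where I expect the main obstacle. When $k$ is odd, the signs $(-1)^i$ fail to close up consistently around the cycle — the substitution above has a sign inconsistency at the wrap-around edge $(k,1)$ — so one of the $k$ equations is genuinely different in structure, and I would instead eliminate $t$ from consecutive pairs: each adjacent pair of quadratics shares the $t^2$ term, so subtracting gives a \emph{linear} equation in $t$, but the full cyclic system of these differences is not independent, and chasing the resultant around the odd cycle leaves a single polynomial in $t$ whose degree works out to $k$. When $k$ is even, the sign pattern does close up, the substitution is globally consistent, and subtracting consecutive quadratics kills $t^2$ cleanly; one obtains $k$ linear constraints $\alpha_\ell t + \beta_\ell = 0$ together with one residual quadratic, but the linear constraints are dependent (they sum telescopically to $0$), so one of them is redundant, and back-substituting the unique $t$ from the remaining linear part into one quadratic — or rather, tracking that the system collapses to degree $k-1$ — gives the count. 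In both cases I would finish, exactly as in Proposition \ref{prop:DSMLdeg}, by exhibiting one explicit numerical $u$ (e.g. small integer counts increasing around the cycle) for which the resulting univariate polynomial has the claimed number of distinct roots, all lying off $\Hcal$ and not at any pole introduced by elimination; since ``having the generic number of simple roots off $\Hcal$'' is an open condition on $u$, one witness suffices to pin the ML-degree at $k$ (resp. $k-1$).

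The most delicate point is getting the parity-dependent elimination to yield \emph{exactly} $k$ (resp. $k-1$) rather than an upper bound: one must check both that no roots are spurious (introduced by clearing denominators in the iterated elimination) and that no roots are lost (the leading coefficients in the elimination steps are generically nonzero). I would handle the ``no loss'' direction by the explicit-$u$ witness and the ``no spurious'' direction by noting that every genuine solution of the binomial system with off-$\Hcal$ coordinates corresponds to an actual critical point of the log-likelihood via \cite{amendola2019}, so the witness count is simultaneously an upper and a lower bound.
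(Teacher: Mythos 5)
Your proposal founders at its first step: the generators you write down for $I(\Mcal_{S_k})$ are not polynomials in the coordinate ring of this model. The binomials $p_{i,i}p_{i+1,i+1}-p_{i,i+1}p_{i+1,i}$ involve the cells $(i+1,i)$, which are structural zeros, i.e.\ $(i+1,i)\notin S_k$, so there is no variable $p_{i+1,i}$ in $\C[p_{ij}\mid ij\in S_k]$. (You cannot appeal to the $2\times 2$-minor description either, since a chordless $2k$-cycle with $k\geq 3$ is not chordal bipartite and has no fully observed $2\times 2$ minor.) The correct vanishing ideal is principal, generated by the single degree-$k$ cycle binomial $\prod_{i=1}^k p_{i,i}-\prod_{i=1}^k p_{i,i+1}$, and this changes the entire shape of the computation: instead of $k$ quadratics in one unknown, the marginal constraints plus one binomial give a single univariate equation.

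Your parametrization of the fiber is also incorrect, and this is what manufactures the spurious parity obstruction you then spend the bulk of the argument trying to resolve. With $p_{i,i}=u_{i,i}+(-1)^i t$ and $p_{i,i+1}=u_{i,i+1}-(-1)^i t$, the row sums are preserved but the column sums are not: column $j$ picks up $\pm 2t$. The correct one-parameter fiber uses a uniform sign, $p_{i,i}=(u_{i,i}+\alpha)/u_{++}$ and $p_{i,i+1}=(u_{i,i+1}-\alpha)/u_{++}$, which closes up around the cycle for every $k$ because the signs alternate with the diagonal/off-diagonal type of the cell, not with the row index; there is no wrap-around inconsistency for odd $k$. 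Substituting into the cycle binomial yields the single polynomial $\prod_{i=1}^k(u_{i,i}+\alpha)-\prod_{i=1}^k(u_{i,i+1}-\alpha)=0$, and the parity enters only here: the $\alpha^k$ terms cancel exactly when $k$ is even, giving degree $k$ for $k$ odd and $k-1$ for $k$ even. One then checks directly (not only via a numerical witness) that for generic $u$ no root lands on $\Hcal$: e.g.\ $\bar p_{1,1}=0$ forces $\alpha=-u_{1,1}$ and hence $u_{i,i+1}+u_{1,1}=0$ for some $i$, a non-generic condition. Your elimination scheme with telescoping quadratics cannot be repaired as stated, because the system it eliminates from is not the defining system of the model.
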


\begin{proof}
Without loss of generality, we may assume that $S_k = \{ (i,i) \mid i \in [k] \} \cup \{ (i, i+1) \mid i \in [k-1]\} \cup \{(k,1)\}.$
Since $G_{S_k}$ consists of a single cycle, the ideal $I(\Mcal_{S_k})$ is principal. Indeed, it is given by
\begin{equation}
I(\Mcal_{S_k}) = \langle \prod_{i=1}^k p_{i,i} -  \prod_{i=1}^{k}p_{i,i+1} \rangle,
\end{equation}
where we set $p_{k,k+1} = p_{k,1}$.
Let $\Hcal$ be the hyperplane arrangement,
\[
\Hcal = \{ p \mid p_{++} \prod_{ij \in S} p_{ij} = 0 \}.
\]
By Proposition 7 of \cite{amendola2019}, ML-degree of $\Mcal_{S_k}$ is the number of solutions to
\begin{equation}\label{eqn:CycleMLdeg}
I(\Mcal_{S_k}) + \langle A(S_k)u - u_+ A(S_k) p \rangle.
\end{equation}
that lie outside of $\Hcal$.

The sufficient statistics of $u$ are of the form $u_{i,i} + u_{i,i+1}$ and 
$u_{i-1,i} + u_{i,i}$ where we set $u_{0,1} = u_{k,1}$.
So computing solutions to Equation (\ref{eqn:CycleMLdeg}) is equivalent to
solving for $\alpha \in \C$ in the equation
\begin{equation}\label{eqn:CyclePolynomial}
\prod_{i=1}^k (u_{i,i} + \alpha) -  \prod_{i=1}^{k} (u_{i,i+1} - \alpha) = 0.
\end{equation}
The MLE is then of the form $p_{i,i} = (u_{i,i} + \alpha)/u_{++}$ and $p_{i,i+1} = (u_{i,i+1} - \alpha)/u_{++}$.
The degree of this polynomial is $k$ when $k$ is odd and $k-1$ when $k$ is even.

Furthermore, we claim that for generic $u$, none of these solutions lie in $\Hcal$.
Indeed, without loss of generality, suppose that $\bar{p}$ is a solution to (\ref{eqn:CycleMLdeg})
with $\bar{p}_{1,1} = 0$. Then we have that $\alpha = -u_{1,1}$.
So the first term of  (\ref{eqn:CyclePolynomial}) is 0.
But then there exists an $i$ such that
\[
u_{i,i+1} - \alpha = u_{i,i+1} + u_{1,1} = 0,
\]
which is a non-generic condition on $u$.
Similarly, since $u$ is generic, we may assume that $u_{++} \neq 0$.
But if $\bar{p}_{++} = 0$, then since each $\bar{p}_{i,i} = (u_{i,i} + \alpha) / u_{++}$ and
$\bar{p}_{i,i+1} = (u_{i,i+1} - \alpha) / u_{++}$, this implies that $u_{++} = 0$, which is a contradiction.
So for generic values of $u$, the roots of (\ref{eqn:CyclePolynomial}) give rise to exactly $k$, resp. $k-1$,
solutions to (\ref{eqn:CycleMLdeg}) 
that lie outside of $\Hcal$. So the ML-degree of $\Mcal_{S_k}$ is $k$ if $k$ is odd and $k-1$ if $k$ is even.
\end{proof}

\begin{thm}\label{thm:NotDCB}
Let $S$ be such that $G_S$ is not doubly chordal bipartite. 
Then $\Mcal_S$ does not have rational MLE.
\end{thm}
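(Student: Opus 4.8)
The plan is to reduce to the two cases already handled in Propositions~\ref{prop:DSMLdeg} and \ref{prop:CycleMLdeg} by exhibiting, inside $\Mcal_S$, a facial submodel whose associated bipartite graph is either the double square or an even cycle of length at least $6$, and then invoke Theorem~\ref{thm:FacialSubsetMLE} to transfer non-rationality of the MLE upward. The key point is that if $G_S$ is not doubly chordal bipartite, then either (i) $G_S$ is not chordal bipartite, so it contains an induced cycle of length $\geq 6$, or (ii) $G_S$ is chordal bipartite but contains an induced double-square subgraph (by the Remark following Example~\ref{ex:DoubleSquare}). In either case we obtain an induced subgraph $H$ of $G_S$ of one of the two problematic types; let $S' \subseteq S$ be the corresponding set of indices, so that $G_{S'} = H$.

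Next I would verify that $\Mcal_{S'}$ is a \emph{facial} submodel of $\Mcal_S$, i.e.\ that $\conv(A(S'))$ is a face of $\conv(A(S))$ where $A(S')$ is the submatrix of $A(S)$ on the columns indexed by $S'$. Writing $H$ on row set $R \subseteq [m]$ and column set $C \subseteq [n]$, the linear functional $\bfc = \sum_{i \notin R} \bfe_i + \sum_{j \notin C}\bfe_{m+j}$ vanishes on exactly those columns $a^{ij}$ with $i \in R$ and $j \in C$, and is strictly positive on all other columns; so $\bfc$ defines a supporting hyperplane cutting out precisely the face spanned by the columns of $A(S')$. (Here one uses that $S'$ consists of \emph{all} edges of $G_S$ between $R$ and $C$, which is exactly what ``induced subgraph'' gives us; this is the step where the induced-subgraph hypothesis is essential, and it is the main thing to get right.) Hence $A(S')$ is a facial submatrix of $A(S)$ and $\Mcal_{S'}$ is a facial submodel.

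Now suppose for contradiction that $\Mcal_S$ has rational MLE. By Theorem~\ref{thm:Huh} it has a Horn pair $(B,h)$, and by Theorem~\ref{thm:FacialSubsetMLE} the facial submodel $\Mcal_{S'}$ has rational MLE given by the restricted Horn pair $(\overline{B}, \overline{h})$; in particular $\Mcal_{S'}$ has ML-degree $1$. But if $H$ is the double square this contradicts Proposition~\ref{prop:DSMLdeg} (ML-degree $2$), and if $H$ is a $2k$-cycle with $k \geq 3$ this contradicts Proposition~\ref{prop:CycleMLdeg} (ML-degree $k \geq 3$ if $k$ odd, $k-1 \geq 3$ if $k \geq 4$ even; in every case at least $2$). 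This contradiction shows $\Mcal_S$ does not have rational MLE. The one bookkeeping subtlety is that $A(S)$ as defined need not literally be the slack matrix of $P_{A(S)}$ assumed in Section~\ref{sec:FacialSubsets}; but by \cite[Theorem~27]{rauh2011} (cited there) one may reparametrize to that form without changing the model or its ML-degree, and the face structure of the polytope—hence the class of facial submodels—is unchanged, so the argument goes through.
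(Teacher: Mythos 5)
Your proposal is correct and follows essentially the same route as the paper's proof: extract an induced double square or induced long cycle, show the corresponding columns of $A(S)$ form a facial submatrix, and combine Propositions~\ref{prop:DSMLdeg} and \ref{prop:CycleMLdeg} with Theorem~\ref{thm:FacialSubsetMLE}. The only cosmetic differences are that you exhibit a single supporting functional $\sum_{i\notin R}\bfe_i+\sum_{j\notin C}\bfe_{m+j}$ where the paper intersects the valid coordinate inequalities of the $0/1$ polytope, and you explicitly flag the slack-matrix reparametrization that the paper handles via its standing assumption in Section~\ref{sec:FacialSubsets}.
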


\begin{proof}
Suppose that $G_S$ is not doubly chordal bipartite.
Then it has an induced subgraph $H$ that is either a double square or a cycle of length greater than or equal to 6.
Without loss of generality, let the edge set $E(H)$ be a subset of $[k] \times [k]$.
Let $A = A(S)$ and let $\Abar$ be the submatrix of $A$ consisting of the columns indexed by elements of $E(H)$.

Let the coordinates of $P_A$ and $P_{\Abar}$ be indexed by $(x_1, \dots, x_m, y_1, \dots, y_n)$.
We claim that $\Abar$ is a facial submatrix of $A$. Indeed, $\Abar$ consists of exactly  the
 vertices of $P_A$ that satisfy $x_i = 0$ for $k < i \leq m$ and $y_j = 0$ for $k < j \leq n$.
 Since $P_A$ is a 0/1 polytope, the inequalities $x_i \geq 0$ and $y_j \geq 0$ are valid. So this constitutes a face of $P_A$.
 
 Therefore, by Propositions \ref{prop:DSMLdeg} and \ref{prop:CycleMLdeg}, 
 $A$ has a facial submatrix $\Abar$ such that $\Mcal_{\Abar}$ has ML-degree strictly greater than 1.
 So by Theorem \ref{thm:FacialSubsetMLE}, the ML-degree of $\Mcal_A = \Mcal_S$ is also strictly greater than 1,
 as needed.
\end{proof}


\section{The Clique Formula for the MLE}\label{sec:Cliques}

In this section we state the main result of the paper, which gives
the specific form of the rational maximum likelihood estimates for 
quasi-independence models when they exist.  These
are described in terms of the complete bipartite subgraphs of the associated graph $G_S$.
A complete bipartite subgraph of $G_S$ corresponds to an entirely nonzero submatrix of $S$. 
This motivates our use of the word ``clique" in the following definition.

\begin{defn}
A set of indices $C = \{i_1, \dots, i_r\} \times \{j_1, \dots, j_s\}$ is a \emph{clique} in $S$ if $(i_{\alpha},j_{\beta}) \in S$ for all $1 \leq \alpha \leq r$ and $1 \leq \beta \leq s$. A clique $C$ is maximal if it is not contained in any other clique in $S$.
\end{defn}

We now describe some important sets of cliques in $S$.

\begin{notation}\label{not:MaxInt}
For every pair of indices $(i,j) \in S$, we let $\Max(ij)$ be the set of all maximal cliques in $S$ that contain $(i,j)$. 
We let $\Int(ij)$ be the set of all containment-maximal pairwise intersections of elements of $\Max(ij)$. 
Similarly, we let $\Max(S)$ denote the set of all maximal cliques in $S$
and $\Int(S)$ denote the set of all maximal intersections of maximal cliques in $S$. 
\end{notation}

\begin{ex}\label{ex:RunningExample}
Let $m = 8$ and $n = 9$. Consider the set of indices
\[
S = \{11, 12, 21, 22, 23, 28, 31,32,33,34,41,45,51,56,57,65,76,86,87, 89 \},
\]
where we replace $(i,j)$ with $ij$ for the sake of brevity. The corresponding matrix with structural zeros is
\[
\begin{bmatrix}
\star & \star & 0 & 0 & 0 & 0 & 0 & 0 & 0 \\
\star & \star & \star & 0 & 0 & 0 & 0 & \star & 0 \\
\star & \star & \star & \star & 0 & 0 & 0 & 0 & 0\\
\star & 0 & 0 & 0 & \star & 0 & 0 & 0 & 0\\
\star & 0 & 0 & 0 & 0 & \star & \star & 0 & 0\\
0 & 0 & 0 & 0 & \star & 0 & 0 & 0 & 0\\
0 & 0 & 0 & 0 & 0 & \star & 0 & 0 & 0\\
0 & 0 & 0 & 0 & 0 & \star & \star & 0 & \star
\end{bmatrix}.
\]
We will use this as a running example. The bipartite graph $G_S$ associated to $S$ is pictured in Figure \ref{Fig:RunningGraph}. 
In this figure, we use white circles to denote vertices corresponding to rows in $S$ and black squares to denote vertices corresponding to columns in $S$. Note that $G_S$ is doubly chordal bipartite since its only cycle of length 6 has two chords.

In this case, the set of maximal cliques in $S$ is
\begin{align*}
\Max(S) =& \big\{ \{11, 21, 31, 41, 51\}, \{11, 12, 21, 22, 31, 32\}, \{21, 22, 23, 31, 32, 33\}, \{21, 22, 23, 28\}, \\
	& \quad \{31, 32, 33, 34\}, \{41, 45\}, \{51, 56, 57\}, \{45,65\}, \{56,76, 86\}, \{56,57,86,87\}, \{86, 87, 89\}  \big\}.
\end{align*}
The set of maximal intersections of maximal cliques in $S$ is
\begin{align*}
\Int(S) =& \big\{ \{11, 21, 31\}, \{21, 22, 31,32\}, \{21, 22, 23\}, \{31,32,33\}, \{41\}, \{51\}, \{45\}, \\
 & \quad \{56,57\}, \{56,86\}, \{86,87\} \big\}.
\end{align*}
Note, for example, that $\{31,32\}$ is the intersection of the two maximal cliques $\{11, 12, 21, 22, 31, 32\}$ and $\{31, 32, 33, 34\}$. 
However it is not in $\Int(S)$ because it is properly contained in the intersection of maximal cliques,
\[
\{11, 12, 21, 22, 31, 32\} \cap \{21, 22, 23, 31, 32, 33\} = \{21, 22, 31, 32\}.
\]

\begin{figure}
\begin{center}
	\begin{tikzpicture}
		\draw (5,0) -- (5,1)--(6.5,2) -- (6.5,3)--(5,4)--(3.5,3) -- (3.5,2) -- (5,1) -- (5,4)--(7,5)--(10,5);
		\draw(3.5,2)--(6.5,3);
		\draw(0,5)--(3,5)--(3,4)--(1.5,4)--(1.5,5);
		\draw(3,5)--(5,4);
		\draw (1.5,4)--(1.5,3);
		\draw(6.5,3)--(8,3);
		\draw [fill] plot [only marks, mark=square*] coordinates {(5,4) (3.5,2) (6.5,2) (5,0) (8.5,5) (1.5,5) (3,4) (1.5,3) (8,3)};
		\draw [fill = white] plot [only marks, mark size=2.5, mark=*] coordinates { (3.5,3) (6.5,3) (5,1) (7,5) (3,5) (10,5) (0,5) (1.5,4)};
		\node[left] at (3.5,3) {1};
		\node[above] at (6.5,3){2};
		\node[right] at (5,1){3};
		\node[above] at (7,5){4};
		\node[above] at (3,5){5};
		\node[above] at (10,5) {6};
		\node[above] at (0,5) {7};
		\node[left] at (1.5,4) {8};
		\node[above] at (5,4) {1};
		\node[left] at (3.5,2) {2};
		\node[right] at (6.5,2){3};
		\node[right] at (5,0){4};
		\node[above] at (8.5,5){5};
		\node[above] at (1.5,5){6};
		\node[below] at (3,4){7};
		\node[below] at(1.5,3){9};
		\node[above] at (8,3){8};
	\end{tikzpicture}
	\end{center}
	\caption{The bipartite graph associated the matrix $S$ in Example \ref{ex:RunningExample}}
	 \label{Fig:RunningGraph}	
\end{figure}
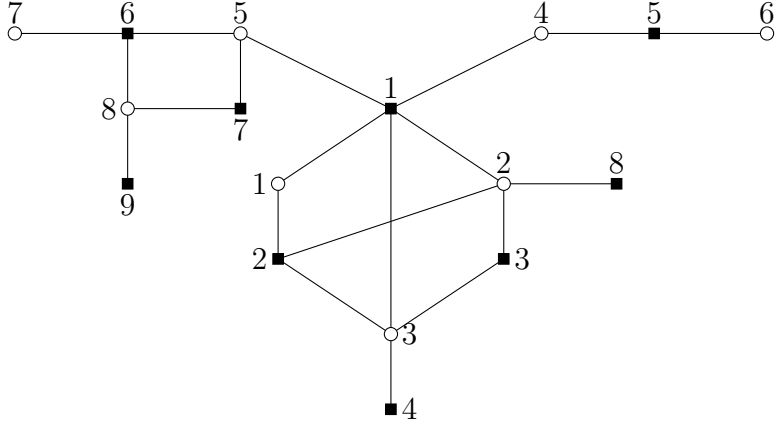
\end{ex}

Let $u = (u_{ij} \mid (i,j) \in S)$ be a matrix of counts. For any $C \subset S$, we let $C^+$ denote the sum of all the entries of $u$ whose indices are in $C$. That is,
\[
C^+ = \sum_{(i,j) \in C} u_{ij}.
\]
Similarly, we denote the row and column marginals $u_{i+} = \sum_{j: (i,j) \in S} u_{ij}$ and $u_{+j} = \sum_{i: (i,j) \in S} u_{ij}$.
The sum of all entries of $u$ is $u_{++} = \sum_{(i,j) \in S} u_{ij}$.

\begin{thm}\label{thm:Main}
Let $S \subset [m] \times [n]$ be a set of indices with associated 
bipartite graph $G_S$ and quasi-independence model  $\Mcal_S$. 
Then $\Mcal_S$ has rational maximum likelihood estimate if and 
only if $G_S$ is doubly chordal bipartite. In particular, if 
$u = (u_{ij} \mid (i,j) \in S)$ is a matrix of counts, 
the maximum likelihood estimate for $u$ has $ij$th entry
\[
\hat{p}_{ij} = \frac{u_{i+}u_{+j}\displaystyle{\prod_{C \in \Int(ij)} C^+}}{u_{++} \displaystyle{ \prod_{D \in \Max(ij)}D^+}}
\]
where the sets $\Max(ij)$ and $\Int(ij)$ are as in Notation \ref{not:MaxInt}.
\end{thm}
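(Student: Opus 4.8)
The plan is to prove the two directions of Theorem \ref{thm:Main} separately, leveraging what has already been set up. The ``only if'' direction is already complete: Theorem \ref{thm:NotDCB} shows that if $G_S$ is not doubly chordal bipartite then $\Mcal_S$ does not have rational MLE. So the remaining work is the ``if'' direction together with the explicit clique formula. My strategy is to verify that the proposed rational function $\hat p = (\hat p_{ij})$ satisfies the hypotheses of Birch's Theorem (Theorem \ref{thm:Birch}): namely that $\hat p \in \Mcal_S$ (equivalently, $\hat p \in \overline{\Mcal_S}$ with positive coordinates summing to one) and that $A(S)\,u = u_{++} A(S)\,\hat p$, i.e. the row and column marginals of $u_{++}\hat p$ agree with those of $u$. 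By uniqueness in Birch's Theorem, this forces $\hat p$ to be the MLE, and since $\hat p$ is manifestly a ratio of products of linear forms in the entries of $u$, it is rational, giving ML-degree $1$.

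First I would carry out the combinatorial groundwork of Section \ref{sec:Cliques} (which the paper defers to that section): establish that in a doubly chordal bipartite graph the sets $\Max(ij)$ and $\Int(ij)$ have a tree-like (in fact, a ``quasi-tree'' or clique-tree) structure, so that the inclusion-exclusion-type cancellations needed below actually go through. Concretely, I expect one needs a lemma saying that for each $(i,j)$, the maximal cliques through $(i,j)$ can be organized so that each pairwise intersection in $\Int(ij)$ is ``used'' exactly the right number of times — this is the doubly-chordal-bipartite analogue of the running intersection property, and it is what makes the product over $\Int(ij)$ divided by the product over $\Max(ij)$ collapse correctly. I would then prove that $\hat p$ lies in $\overline{\Mcal_S}$ by checking it is a monomial in the quantities $\{C^+ : C \in \Max(S)\cup\Int(S)\}$ and $u_{i+}, u_{+j}, u_{++}$ in such a way that $\log \hat p_{ij}$ decomposes as $(\text{function of } i) + (\text{function of } j)$ modulo the toric relations — equivalently, that $\hat p_{i_1 j_1}\hat p_{i_2 j_2} = \hat p_{i_1 j_2}\hat p_{i_2 j_1}$ whenever all four indices lie in $S$, and more generally that $\hat p$ kills $I_{A(S)}$. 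This is where the clique-tree structure is essential: the products $\prod_{C\in \Int(ij)}C^+ / \prod_{D\in\Max(ij)}D^+$ for the four corners of a $2\times 2$ box must telescope.

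The genuinely hard part — and the reason Sections \ref{sec:FixedColumn} and \ref{sec:BirchsThm} are ``the main bulk of the paper'' — is verifying the marginal conditions $\sum_{j:(i,j)\in S}\hat p_{ij} = u_{i+}/u_{++}$ and $\sum_{i:(i,j)\in S}\hat p_{ij} = u_{+j}/u_{++}$. My plan here mirrors the section titles: fix a column $j$ (Section \ref{sec:FixedColumn}) and compute $\sum_i \hat p_{ij}$ by inducting on the clique structure restricted to that column, repeatedly using the telescoping identity $\sum (\text{child cliques}) - \sum(\text{intersections}) = (\text{parent})$ that holds in a clique tree; the factor $u_{+j}$ pulls out and the remaining sum collapses to $u_{+j}/u_{++}$. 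The symmetric statement for rows follows by transposing $S$. The final obstacle — that $\sum_{ij\in S}\hat p_{ij} = 1$ and all $\hat p_{ij}>0$ for generic $u$ so that $\hat p$ genuinely lies in the open simplex rather than its boundary — then follows because $\mathbf 1 \in \rowspan(A(S))$ (summing the row-marginal identities gives total mass $1$) and because each $C^+$, $u_{i+}$, $u_{+j}$, $u_{++}$ is a nonzero linear form, hence generically nonzero. Once Birch's Theorem applies we are done; I anticipate that organizing the induction over the clique tree of a doubly chordal bipartite graph — and proving such a tree exists with the right running-intersection property — will consume most of the technical effort.
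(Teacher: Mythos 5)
Your plan is correct and follows essentially the same route as the paper: verify the hypotheses of Birch's Theorem by (i) using the tree structure on maximal cliques through a fixed column (the paper's poset $P(j_0)$, which exists by the DS-free condition) to perform the telescoping induction showing the column marginals of $u_{++}\hat p$ match those of $u$, transposing for rows, and (ii) checking the fully observed $2\times 2$ minors vanish on $\hat p$, which suffices because those minors generate the vanishing ideal for chordal bipartite $S$. The only point you elide is that last generation fact (the paper's Proposition \ref{prop:DefiningEquations}, via Markov bases), but your "equivalently \dots kills $I_{A(S)}$" is exactly what that proposition supplies, so the approaches coincide.
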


Over the course of the next two sections, we prove various lemmas 
that  ultimately allow us to prove Theorem \ref{thm:Main}.
\begin{ex}
	Consider the set of indices $S$ from Example \ref{ex:RunningExample}.
	Let $u$ be a matrix of counts. Consider the maximum likelihood estimate for the $(2,1)$ entry, $\hat{p}_{21}$.
	The maximal cliques that contain $21$ are $\{11, 21, 31, 41, 51\}, \{11, 12, 21, 22, 31, 32\}$,
	$\{21, 22, 23, 28\}$ and $\{21, 22, 23, 31, 32, 33\}$.
	The maximal intersections of maximal cliques that contain $21$ are $\{11, 21,31\}$, $\{21, 22, 23\}$ and $\{21, 22, 31, 32\}$.
	Since $S$ is doubly chordal bipartite, we apply Theorem \ref{thm:Main} to obtain that the numerator of $\hat{p}_{21}$ is
	\[
	(u_{21} + u_{22} + u_{23} + u_{28})(u_{11} + u_{21} + u_{31} + u_{41} + u_{51})(u_{11} + u_{21} + u_{31})(u_{21} + u_{22} + u_{23})(u_{21} + u_{22} + u_{31} + u_{32}).
	\]
	The denominator of $\hat{p}_{21}$ is {\footnotesize
	\[
	u_{++}(u_{11} + u_{21} + u_{31} + u_{41} + u_{51})(u_{11} +u_{12} + u_{21} + u_{22} + u_{31} + u_{32})(u_{21} + u_{22} + u_{23} + u_{28})(u_{21} + u_{22} + u_{23} + u_{31} + u_{32} + u_{33}).
	\]}
	We note that when a maximal clique is a single row or column, as is the case with  $\{21, 22, 23, 28\}$ and $\{11, 21, 31, 41, 51\}$, we have cancellation between the numerator and denominator.
\end{ex}

In order to prove Theorem \ref{thm:Main}, we show that $\hat{p}_{ij}$ satisfies the conditions of Birch's theorem.
First, we investigate the intersections of a fixed column of the matrix 
with structural zeros with maximal cliques and their intersections. 
We  prove useful lemmas about the form that these maximal cliques have 
that  allow us to show that the conditions of Birch's Theorem are satisfied.
In particular, we  use them to prove Corollary \ref{cor:SumEntireBlock}, which states
  that the column marginal of the formula in Theorem \ref{thm:Main}
given by the fixed column is equal to that of the normalized data.


\section{Intersections of Cliques with a Fixed Column}\label{sec:FixedColumn}

In this section we prove some results that will set the stage for the proof
of Theorem \ref{thm:Main} that appears in Section \ref{sec:BirchsThm}.
To prove that our formulas satisfy Birch's theorem, we need to understand
what happens to sums of these formulas over certain sets of indices.

Let $S \subset [m] \times [n]$ and let $j_0 \in [n]$.
 Without loss of generality, we assume that $(1,j_0),\dots,(r,j_0) \in S$, 
 and that the last $(i,j_0) \not\in S$ for all $i > r$.
Let 
\[
N_{j_0} := \{ (1, j_0), \dots, (r, j_0) \}.
\]
 We consider $j_0$ to be the index of a column in the matrix representation of $S$, and $1, \dots, r$ to be the indices of its nonzero rows.
Now let $T_0 \mid \dots \mid T_h$ be the coarsest partition of $[n]$ with the property that whenever $j,k \in T_{\ell}$,
\[
\{ i \in [r] \mid (i,j) \in S \} = \{ i \in [r] \mid (i,k) \in S\}.
\]
In the matrix representation of $S$, each $T_{\ell}$ corresponds to a set of columns whose first $r$ rows are identical. The fact that we take $T_0 \mid \dots \mid T_h$ to be the coarsest such partition ensures that the supports of the columns in distinct parts of the partition are distinct.

Define the partition $B_0 \mid \dots \mid B_h$ of $S \cap ([r] \times [n])$ by 
$B_{\ell} = \{(i,j) \mid j \in T_{\ell}\}$. Note that one of the $B_{\ell}$ may be empty, 
in which case we exclude it from the partition.
We call these $B_{\ell}$ the \emph{blocks} of $S$ corresponding to column $j_0$.
We fix $j_0$ and $B_0, \dots, B_h$ for the entirety of this section, 
and we assume without loss of generality that $j_0 \in T_0$.

Denote by $\rows(B_{\alpha})$ the set of all $i \in [r]$ such that $(i,j) \in B_{\alpha}$ for some column index $j$. 
Note that this is a subset of the first $1, \dots, r$ rows of $S$, 
and that in the matrix representation of $S$, the columns whose indices are in $B_{\alpha}$ may not have the same zero patterns in rows $r+1, \dots m$.
Similarly, for each $j \in [n]$, define $\rows(j)$ to be the set of all $i \in [r]$ such that $(i,j) \in S$; 
that is, the elements of $\rows(j)$ are the row indices of the nonzero entries of column $j$ in the
first $r$ rows of $S$.  Note that the dependence on $j_0$ in this notation stems from the
fact that the column $j_0$ is used to obtained the partition $B_0 \mid \dots \mid B_h$.

\begin{ex}\label{ex:BlockLabels}
Consider the running example $S$ from Example \ref{ex:RunningExample}, and let $j_0 = 1$ be the first column of $S$.
In this case, $r = 5$ since only the first 5 rows entries of column $j_0$ are nonzero.
Then the blocks associated to $j_0$ consist of the following columns.
\begin{align*}
T_0 & = \{j_0\} = \{1\} & \qquad & T_1 = \{2\} \\
T_2 &= \{3\} & \qquad & T_3 = \{4\} \\
T_4 &= \{5\} & \qquad & T_5 = \{6,7\} \\
T_6 &= \{8\} & \qquad &T_7 = \{9\}.
\end{align*}

We note that although columns 6 and 7 are not the same over the whole matrix, their first five rows are the same. 
Since these are the nonzero rows of column $j_0$, columns 6 and 7 belong to the same block.

The $B_i$ associated to each of these sets of column indices are
\begin{align*}
B_0 &= \{11, 21, 31, 41, 51\} & \qquad B_1 &= \{12, 22, 32\} \\
B_2 &= \{23,33\} & \qquad B_3 &= \{34\} \\
B_4 &= \{45\} & \qquad B_5 &= \{56, 57\} \\
B_6 &= \{28\} & \qquad B_7 &= \emptyset
\end{align*}

For instance, $\rows(B_1) = \{1,2,3\}$ and $\rows(B_5) = \{5\}$.
\end{ex}

The following proposition characterizes what configurations of the rows of the $B_{\alpha}$'s
 are allowable in order to avoid a cycle with exactly one chord. We call the condition outlined in Proposition \ref{prop:DSFree} the double-squarefree, 
or \emph{DS-free} condition.

\begin{prop}[DS-free condition]\label{prop:DSFree}
Let $S$ be doubly chordal bipartite. 
Let $\alpha, \beta \in [h]$. If $\rows(B_{\alpha}) \cap \rows(B_{\beta})$ is nonempty, 
then $\rows(B_{\alpha}) \subset \rows(B_{\beta})$ or $\rows(B_{\beta}) \subset \rows(B_{\alpha})$.
\end{prop}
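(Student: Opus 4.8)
The plan is to argue by contradiction: assume $\rows(B_\alpha)$ and $\rows(B_\beta)$ overlap but neither contains the other, and produce a $6$-cycle with exactly one chord in $G_S$, contradicting the double chordal bipartite hypothesis. So fix a row index $i_0 \in \rows(B_\alpha) \cap \rows(B_\beta)$, a row $i_1 \in \rows(B_\alpha) \setminus \rows(B_\beta)$, and a row $i_2 \in \rows(B_\beta) \setminus \rows(B_\alpha)$. Pick column indices $j_\alpha \in T_\alpha$ and $j_\beta \in T_\beta$ witnessing the blocks. By definition of $\rows(B_\alpha)$ as the set of row indices appearing in $B_\alpha$ together with the fact that the partition $T_0 \mid \cdots \mid T_h$ groups columns with identical first-$r$-row support, we get that $(i,j_\alpha)\in S$ iff $i \in \rows(B_\alpha)$ (for $i \le r$), and likewise for $\beta$. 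Hence the edges $i_0 j_\alpha, i_1 j_\alpha, i_0 j_\beta, i_2 j_\beta$ all lie in $G_S$, while $i_1 j_\beta \notin E(G_S)$ and $i_2 j_\alpha \notin E(G_S)$ since $i_1 \notin \rows(B_\beta)$ and $i_2 \notin \rows(B_\alpha)$.

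Next I would bring in the distinguished column $j_0$. Recall $j_0 \in T_0$ and $N_{j_0} = \{(1,j_0),\dots,(r,j_0)\}$, so every $i \le r$ satisfies $(i,j_0)\in S$; in particular the edges $i_0 j_0, i_1 j_0, i_2 j_0$ are all in $G_S$. Now consider the $6$-cycle
\[
i_1 \,-\, j_\alpha \,-\, i_0 \,-\, j_\beta \,-\, i_2 \,-\, j_0 \,-\, i_1.
\]
One must check this is a genuine $6$-cycle, i.e.\ that $i_0,i_1,i_2$ are pairwise distinct (true, since $i_1 \notin \rows(B_\beta) \ni i_0$ and $i_2 \notin \rows(B_\alpha) \ni i_0$, and $i_1 \ne i_2$ because $i_1 \in \rows(B_\alpha)$ while $i_2 \notin \rows(B_\alpha)$) and that $j_0, j_\alpha, j_\beta$ are pairwise distinct (true: $\alpha \ne \beta$ and $\alpha,\beta \in [h]$ so neither is $0$, using that distinct parts of the coarsest partition have distinct supports — $\rows(B_\alpha)\ne\rows(B_\beta)$ here since neither contains the other). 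The three possible chords of this hexagon are $i_1 j_\beta$, $i_0 j_0$, and $i_2 j_\alpha$. We have shown $i_1 j_\beta \notin E(G_S)$ and $i_2 j_\alpha \notin E(G_S)$, while $i_0 j_0 \in E(G_S)$ because $i_0 \le r$. So this is a $6$-cycle with \emph{exactly one} chord, i.e.\ an induced double-square, contradicting that $S$ is doubly chordal bipartite. This contradiction forces $\rows(B_\alpha) \subset \rows(B_\beta)$ or $\rows(B_\beta) \subset \rows(B_\alpha)$.

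The main obstacle I anticipate is the bookkeeping needed to confirm that all six vertices of the cycle are distinct and that the column vertices $j_\alpha, j_\beta$ can be chosen so that their full supports (not just the first $r$ rows) don't accidentally introduce extra chords — but in fact this is not an issue, since the only possible chords of a hexagon are the three "long diagonals" listed above, and those are determined entirely by membership of the pairs $i_1 j_\beta$, $i_0 j_0$, $i_2 j_\alpha$ in $S$, each of which we control. A secondary subtlety is making sure $\alpha,\beta \ne 0$ so that $j_0 \notin T_\alpha \cup T_\beta$; this is exactly why the statement restricts to $\alpha,\beta \in [h]$ rather than allowing the block $B_0$ containing $j_0$. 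Once these distinctness checks are dispatched, the argument is a direct application of the definition of "doubly chordal bipartite."
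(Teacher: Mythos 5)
Your proposal is correct and is essentially the paper's own argument: the paper likewise assumes neither row set contains the other, picks a common row, a row in each difference, and the columns $j_0, j_\alpha, j_\beta$, and observes that the resulting $3\times 3$ submatrix is the double-square matrix (a 6-cycle with exactly one chord), contradicting the doubly chordal bipartite hypothesis. Your version merely spells out explicitly the 6-cycle, the distinctness checks, and the fact that in a bipartite graph the only possible chords are the three long diagonals, which the paper leaves implicit.
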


\begin{proof}
For the sake of contradiction, suppose without loss of generality that $\rows(B_1) \cap \rows(B_2)$ is nonempty
but neither is contained in the other.
Then let $i_0, i_1 \in \rows(B_1)$ and $i_1, i_2 \in \rows(B_2)$
so that $i_0 \not\in \rows(B_2)$ and $i_2 \not\in \rows(B_1)$.
We have $i_0, i_1, i_2 \in \rows(j_0)$ by definition.
Let $j_1 \in T_1$ and $j_2 \in T_2$.
Then the $\{i_0, i_1, i_2\} \times \{j_0, j_1, j_2\}$ submatrix of $S$ is the matrix of a double-square,
which contradicts that $S$ is doubly chordal bipartite.
\end{proof}

Proposition \ref{prop:DSFree} implies that the sets $\rows(B_{\alpha})$ over 
all $\alpha$ have a tree structure ordered by containment.
In fact, we will see that this gives a tree structure on 
the maximal cliques in $S$ that intersect $N_{j_0}$.
(Recall that $N_{j_0} = \{ (i, j_0) \in S \} = [r] \times \{j_0\}$).

\begin{ex}
The matrix $S$ from Example \ref{ex:RunningExample} is doubly chordal bipartite, and as such, satisfies the DS-free condition.
If we append a tenth column, $(0, \star , \star ,\star, 0, 0, 0, 0)^T$ to obtain a matrix $S'$, this introduces a new block $B_8$ which just contains column 10. 
This matrix violates the DS-free condition
since $\rows(B_1) = \{1,2,3\}$ and $\rows(B_8) = \{2,3,4\}$. Their intersection is nonempty, but neither is contained in another.
Indeed, the $\{1,2,4\} \times \{1,2,10\}$ submatrix of $S'$ is the matrix of a double-square.
\end{ex}

For each pair of indices $ij$ such that $(i,j) \in S$, 
let $x_{ij}$ be the polynomial obtained from $\hat{p}_{ij}$ by simultaneously clearing the denominators of all $\hat{p}_{k\ell}$.
That is, to obtain $x_{ij}$, we multiply $\hat{p}_{ij}$ by $u_{++} \prod_{D \in \Max(S)} D^+$ so that
\[
x_{ij} = u_{i+}u_{+j}\displaystyle{\prod_{C \in \Int(ij)} C^+}\displaystyle{ \prod_{D \in \Max(S) \setminus \Max(ij)}D^+}.
\]
Our main goal in this section is to derive a formula for the sum,
\[
\sum_{i \in \rows(B_{\alpha})} x_{ij_0}.
\]
This is the content of 
Lemma \ref{lem:SumInBlock}. This formula allows us to verify
that the $j_0$ column marginal of $\hat{p}$ matches that of the normalized data.
In order to simplify this sum,
we must first understand how maximal cliques and their intersections intersect $N_{j_0}$. 
For each $B_{\alpha}$ with $\alpha \in [h]$ and $\rows(B_{\alpha}) \neq \emptyset$, we let $D_{\alpha}$ be the clique,
\[
D_{\alpha} = \{ (i,j) \mid i \in \rows(B_{\alpha}) \text{ and } \rows(B_{\alpha}) \subset \rows(j) \}.
\]
In other words, $D_{\alpha}$ is the largest clique that contains $B_{\alpha}$ and intersects $N_{j_0}$.
We call $D_{\alpha}$ the \emph{clique induced by} $B_{\alpha}$. 

\begin{ex}\label{ex:MaximalCliques}
Consider our running example $S$ with $j_0 = 1$ and blocks $B_0, \dots, B_7$ as described in Example \ref{ex:BlockLabels}.
Then $N_{j_0} = N_1 = \{11, 21, 31, 41, 51\}.$
The cliques induced by $B_0, \dots, B_7$ are
\begin{align*}
D_0 & = \{11, 21, 31, 41, 51\}, \\
D_1 &= \{11, 12, 21, 22, 31, 32\}, \\
D_2 &=  \{21, 22, 23, 31, 32, 33\}, \\
D_3 &=  \{31, 32, 33, 34\}, \\
D_4 &= \{41, 45\}, \\
D_5 &=  \{51, 56, 57\}, \text{ and }\\
D_6 &= \{21, 22, 23, 28\}.
\end{align*}
There is no $D_7$ since the block $B_7$ is empty. Note that these are exactly the maximal cliques in $S$ that intersect $N_1$. The next proposition proves that this is the case for all DS-free matrices with structural zeros.
\end{ex}

We note that when $D_{\alpha}$ is the clique induced by $B_{\alpha}$,
all of the nonzero rows of $D_{\alpha}$ lie in $[r]$ by definition of an induced clique.
We continue to use the notation $\rows(D_{\alpha})$ since the formation
of the set $B_{\alpha}$ depends on the specified column $j_0$.
For any clique $C$, let $\cols(C) = \{ j \mid (i,j) \in C \text{ for some } j\}$.
 
\begin{prop}\label{prop:MaxCliqueCharacterization}
For all $\alpha \in [h]$, $D_{\alpha}$ is a maximal clique. 
Furthermore, any maximal clique that has nonempty intersection with $N_{j_0}$ is induced by some $B_{\alpha}$.
\end{prop}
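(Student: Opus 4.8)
The plan is to argue both halves essentially by the DS-free structure established in Proposition \ref{prop:DSFree}. First, to show that $D_\alpha$ is a clique: by construction $D_\alpha = \{(i,j) \mid i \in \rows(B_\alpha),\ \rows(B_\alpha) \subset \rows(j)\}$, and for any $i \in \rows(B_\alpha)$ and any $j$ appearing in $D_\alpha$ we have $i \in \rows(B_\alpha) \subset \rows(j)$, so $(i,j) \in S$; hence $D_\alpha$ is genuinely a clique, and it contains $B_\alpha$ and meets $N_{j_0}$ (since $j_0 \in \cols(B_\alpha)$ forces $\rows(B_\alpha) \subseteq \rows(j_0)$, so the column $j_0$ is among the columns of $D_\alpha$, and every $(i,j_0)$ with $i \in \rows(B_\alpha)$ lies in $D_\alpha$).

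For maximality of $D_\alpha$: suppose $D_\alpha \subsetneq C$ for some clique $C$. I would split into two cases according to whether $C$ adds a new row or only new columns. If $C$ contains a column $j$ not in $\cols(D_\alpha)$ with $\rows(B_\alpha) \subseteq \rows(j)$, then $j$ would already have been included in $D_\alpha$ — contradiction; so any new column $j$ of $C$ has $\rows(B_\alpha) \not\subseteq \rows(j)$, meaning some $i^* \in \rows(B_\alpha)$ has $i^* \notin \rows(j)$, i.e.\ $(i^*, j) \notin S$, contradicting that $C$ is a clique containing both $i^*$ (inherited from $D_\alpha \ni (i^*, j_0)$) and $j$. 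If instead $C$ contains a new row $i' \notin \rows(B_\alpha)$, then $i'$ must be compatible with every column of $D_\alpha$; in particular $(i', j_0) \in S$, so $i' \in \rows(j_0) = [r]$, and $i'$ lies in $\rows(B_\beta)$ for the block $B_\beta$ containing the part of column $j_0$'s support — but more usefully, for each column $j \in \cols(B_\alpha)$ we get $(i',j) \in S$, so $i' \in \rows(j)$ for all such $j$; this says $i'$ together with $\rows(B_\alpha)$ forms a larger common-support set on the columns of $T_\alpha$. The coarseness of the partition $T_0 \mid \cdots \mid T_h$ then needs to be invoked to see that this forces the columns of $T_\alpha$ to actually have support $\supseteq \rows(B_\alpha) \cup \{i'\}$ in their first $r$ rows, contradicting $i' \notin \rows(B_\alpha)$. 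The DS-free condition enters to rule out the configuration where $i'$ is compatible with some but not all columns of the relevant blocks.

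For the second statement — every maximal clique $M$ meeting $N_{j_0}$ is some $D_\alpha$ — I would argue as follows. Since $M \cap N_{j_0} \neq \emptyset$, all rows of $M$ lie in $\rows(j_0) = [r]$, and $M$ contains some column $j \in \cols(M)$; pick any such $j$ and let $B_\alpha$ be the block with $j \in T_\alpha$. The rows of $M$ are a common subset of $\bigcap_{j \in \cols(M)} \rows(j)$, and because $M$ meets $N_{j_0}$ all these columns share at least the rows $\rows(M)$ in their first-$r$ portion; I claim $\rows(M) = \rows(B_\alpha)$ for a suitable $\alpha$, which will follow once one shows all columns of $M$ lie in a single block, using Proposition \ref{prop:DSFree} to prevent two blocks with incomparable row-sets from both meeting $\rows(M)$. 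Then $M \subseteq D_\alpha$ by definition of the induced clique, and maximality of $M$ together with $D_\alpha$ being a clique forces $M = D_\alpha$.

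\textbf{Main obstacle.} The delicate point is the maximality argument when a candidate enlargement $C$ adds a new \emph{row}: one must carefully use the coarseness of the partition $T_0 \mid \cdots \mid T_h$ (distinct blocks have distinct first-$r$ supports) in tandem with the DS-free condition from Proposition \ref{prop:DSFree} to conclude that no row outside $\rows(B_\alpha)$ can be compatible with \emph{all} columns of $B_\alpha$ while respecting the tree structure of the $\rows(B_\beta)$. I expect this bookkeeping — tracking which block a newly added column or row belongs to, and deriving a forbidden double-square submatrix otherwise — to be the crux; the clique and column-enlargement parts are comparatively routine.
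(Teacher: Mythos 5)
Your first half is essentially sound, but you have mislocated the difficulty. The "new row" case that you flag as the crux is actually immediate and needs neither the coarseness of the partition nor Proposition \ref{prop:DSFree}: if a clique $C$ properly containing $D_{\alpha}$ has a row $i' \notin \rows(B_{\alpha})$, then $(i',j_0) \in S$ forces $i' \in [r]$ (this part you have), and $(i',j) \in S$ for any single column $j \in T_{\alpha} \subseteq \cols(D_{\alpha})$ already gives $i' \in \rows(j) = \rows(B_{\alpha})$, since by the very definition of the block every column of $T_{\alpha}$ has support exactly $\rows(B_{\alpha})$ in the first $r$ rows -- a contradiction. This is exactly how the paper argues (rows in $[r]$ are blocked by the columns of $T_{\alpha}$, rows beyond $r$ are blocked by $j_0$), so leaving this step open, and proposing to route it through a double-square argument, is unnecessary but repairable.

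The genuine gap is in your second half. Your plan hinges on showing that all columns of a maximal clique $M$ meeting $N_{j_0}$ lie in a single block, and that claim is false: in Example \ref{ex:RunningExample} with $j_0 = 1$, the maximal clique $\{11,12,21,22,31,32\}$ meets $N_1$ but its columns $1$ and $2$ lie in the distinct blocks $T_0$ and $T_1$. What Proposition \ref{prop:DSFree} gives you is only that the row sets of the blocks contributing columns to $M$ are pairwise comparable (they all contain $\rows(M) \neq \emptyset$), not that there is only one such block, so $\rows(M) = \rows(B_{\alpha})$ does not follow along your route. The paper's argument instead picks a column $j_1$ of $M$ with $\rows(j_1)$ inclusion-minimal among the columns of $M$; if $M$ is not the clique induced by the block of $j_1$, then some $i_1 \in \rows(j_1)$ is not a row of $M$, and maximality of $M$ produces a column $j_2$ of $M$ with $i_1 \notin \rows(j_2)$; then $\rows(j_1)$ and $\rows(j_2)$ intersect (both contain $\rows(M)$) yet neither contains the other, contradicting Proposition \ref{prop:DSFree}. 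Replacing your "single block" step by this minimal-column argument (or by the equivalent observation that the minimal block among those meeting $\cols(M)$ has row set equal to $\rows(M)$, again via maximality of $M$) closes the proof.
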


\begin{proof}
We will show that $D_{\alpha}$ is maximal by showing that we cannot add any rows or columns to it.
We cannot add any columns to $D_{\alpha}$ by definition.
We cannot add any of rows $1, \dots, r$ to $D_{\alpha}$ since all nonzero rows of $B_{\alpha}$ are already contained in $D_{\alpha}$.
We cannot add any of rows $r+1, \dots, m$ to $D_{\alpha}$ since $j_0$ is a column of $D_{\alpha}$ whose entries in rows $r+1, \dots, m$ 
are zero.
Note that if we can add one element $(i,j)$ to $D_{\alpha}$, then by definition of a clique,
we must either be able to add all of $\{i\} \times \cols(D_{\alpha})$ or $\rows(D_{\alpha}) \times \{j\}$ to the clique.
Since we cannot add any rows or columns to $D_{\alpha}$, it is a maximal clique.

Now let $D$ be a maximal clique that intersects $N_{j_0}$.
For the sake of contradiction, suppose that $D \neq D_{\alpha}$ for each $\alpha \in [h]$.

Let $j_1$ be a column in $D$ such that $\rows(j_1)$ is minimal among all columns of $D$.
We must have that $(i,j_1) \in B_{\alpha}$ for some $i \in [r]$ and $\alpha \in [h]$.
Since $D \neq D_{\alpha}$, it must be the case that column $j_1$ has a nonzero row $i_1 \in [r]$ that is not in $D$.
Since $D$ is maximal, there must exist another column $j_2$ in $D$ that has a zero in row $i_1$.
Therefore, we have that $\rows(j_1) \not\subset \rows(j_2)$.
Furthermore, $\rows(j_2) \not\subset \rows(j_1)$ by the minimality of $j_1$.
But since $D$ is nonempty, the intersection of $\rows(j_1)$ and $\rows(j_2)$ must be nonempty.
This contradicts Proposition \ref{prop:DSFree}, as needed.
\end{proof}

Proposition \ref{prop:MaxCliqueCharacterization} shows that the maximal cliques
 that intersect $N_{j_0}$ are exactly the cliques that are induced by some $B_{\alpha}$. 
 The DS-free condition gives a poset structure on the set of these maximal cliques 
 $D_0, \dots, D_h$ that intersect $N_{j_0}$ nontrivially. 
 
 \begin{defn}
 Let $P(j_0)$ denote the poset with ground set $\{D_0, \dots D_h\}$ 
 and $D_{\alpha} \leq D_{\beta}$ if and only if $\rows(D_{\alpha}) \subset \rows(D_{\beta})$.
 \end{defn}
 
 Recall that for a poset $P$ and two elements of its ground set, $p, q \in P$,
 we say that $q$ \emph{covers} $p$ if $p < q$ and for any $r \in P$,
 if $p \leq r \leq q$, then $r = p$ or $r = q$. We denote such a cover relation by $p \lessdot q$.
 The \emph{Hasse diagram} of a poset is a directed acyclic graph on $P$ with an edge
 from $p$ to $q$ whenever $p \lessdot q$.
 In the case of $P(j_0)$, the Hasse diagram of this poset is a tree since the DS-free condition implies
 that any $D_{\alpha}$ is covered by at most one maximal clique. 
 
 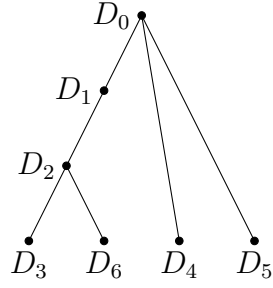
\begin{figure}
 \begin{center}
	\begin{tikzpicture}
	\draw (0,0) -- (1.5,3) -- (3,0);
	\draw(.5,1)--(1,0);
	\draw (1.5,3)--(2,0);
	\draw[fill] (0,0) circle [radius = .05];
	\node[below] at (0,0) {$D_3$};
	\draw[fill] (1,0) circle [radius = .05];
	\node[below] at (1,0) {$D_6$};
	\draw[fill] (2,0) circle [radius = .05];
	\node[below] at (2,0) {$D_4$};
	\draw[fill] (3,0) circle [radius = .05];
	\node[below] at (3,0) {$D_5$};
	\draw[fill] (.5,1) circle [radius = .05];
	\node[left] at (.5,1) {$D_2$};
	\draw[fill] (1,2) circle [radius = .05];
	\node[left] at (1,2) {$D_1$};
	\draw [fill] (1.5,3) circle[radius = .05];
	\node[left] at (1.5,3) {$D_0$};
	\end{tikzpicture}
	\end{center}
	 	\caption{The poset $P(j_0)$ for Example \ref{ex:Poset}}
	 \label{Fig:Poset}
 \end{figure}
 
 \begin{ex}\label{ex:Poset}
 In our running example $S$ with $j_0 = 1$ and blocks $B_0, \dots B_6$ and associated cliques $D_0, \dots, D_6$, the Hasse diagram of the poset $P(j_0)$ is pictured in Figure \ref{Fig:Poset}.
 \end{ex}

The next proposition shows that the cover relations in this poset, denoted $D_{\alpha} \lessdot D_{\beta}$
 correspond to maximal intersections of maximal cliques that intersect $N_{j_0}$ nontrivially.
Denote by $\cols(D_{\alpha})$ the nonzero columns of the clique $D_{\alpha}$.
We note that if $D_{\alpha} \lessdot D_{\beta}$, then $\cols(D_{\beta}) \subset \cols(D_{\alpha})$.
In particular, this means that if $C = D_{\alpha} \cap D_{\beta}$, then
$C = \rows(D_{\alpha}) \times \cols(D_{\beta})$.

\begin{prop}\label{prop:MaxIntCharacterization}
Let $C = D_{\alpha} \cap D_{\beta}$. Then $C$ is maximal among all pairwise intersections of maximal cliques if and only if $D_{\alpha} \lessdot D_{\beta}$ or $D_{\beta} \lessdot D_{\alpha}$ in $P(j_0)$.
\end{prop}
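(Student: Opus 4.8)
The plan is to reduce the statement to a rows–columns duality among the induced cliques $D_0,\dots,D_h$ and then argue inside the poset $P(j_0)$. First I would record the duality. Because each $D_\alpha$ is the clique induced by $B_\alpha$, it is a maximal clique with $\rows(D_\alpha)\subseteq[r]$, and
\[
\cols(D_\alpha)=\{\,j\in[n]\mid\rows(D_\alpha)\subseteq\rows(j)\,\},\qquad
\rows(D_\alpha)=\{\,i\in[r]\mid i\in\rows(j)\ \text{for all}\ j\in\cols(D_\alpha)\,\};
\]
the first identity is the definition of the induced clique (using $\rows(D_\alpha)=\rows(B_\alpha)$), and the second follows from the maximality of $D_\alpha$. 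These give the antitone equivalence
\[
D_\alpha\le D_\beta\iff\rows(D_\alpha)\subseteq\rows(D_\beta)\iff\cols(D_\beta)\subseteq\cols(D_\alpha),
\]
with the inclusions strict together, and --- since a clique is the rectangle $\rows\times\cols$ --- the formula $D_\alpha\cap D_\beta=\rows(D_\alpha)\times\cols(D_\beta)$ whenever $D_\alpha\le D_\beta$. This upgrades the remark before the proposition from cover relations to arbitrary comparable pairs and is the technical core of the proof.

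Next I would clear away degeneracies. We may assume $\alpha\neq\beta$, since otherwise both sides of the equivalence fail. The clique $D_0$ induced by the block containing $j_0$ has $\rows(D_0)=[r]$, hence is the maximum of $P(j_0)$. If $\rows(D_\alpha)$ and $\rows(D_\beta)$ are incomparable, then by Proposition~\ref{prop:DSFree} they are disjoint, so $C=\emptyset$; but then $\alpha,\beta\neq0$ and $C$ is properly contained in the nonempty pairwise intersection $D_\alpha\cap D_0=\rows(D_\alpha)\times\cols(D_0)$, so $C$ is not containment-maximal, while $D_\alpha,D_\beta$ are not in a cover relation --- both sides fail. (The case $|P(j_0)|\le1$ is vacuous.) So assume $D_\alpha,D_\beta$ are comparable; as distinct induced cliques have distinct row sets, we may take $D_\alpha<D_\beta$, so $C=\rows(D_\alpha)\times\cols(D_\beta)$ is nonempty and contains some $(i,j_0)$.

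For the forward implication I would prove the contrapositive: if $D_\alpha<D_\beta$ is not a cover, pick $D_\gamma$ with $D_\alpha<D_\gamma<D_\beta$; then $\cols(D_\beta)\subsetneq\cols(D_\gamma)$, so $C=\rows(D_\alpha)\times\cols(D_\beta)\subsetneq\rows(D_\alpha)\times\cols(D_\gamma)=D_\alpha\cap D_\gamma$, and $C$ fails to be maximal. For the reverse implication, assume $D_\alpha\lessdot D_\beta$ and suppose $C\subsetneq C'$ for some pairwise intersection $C'=D_\gamma\cap D_\delta$ of distinct maximal cliques of $S$. Since $C'$ contains the point $(i,j_0)\in C$, both $D_\gamma$ and $D_\delta$ meet $N_{j_0}$, hence lie in $P(j_0)$ by Proposition~\ref{prop:MaxCliqueCharacterization}; their row sets share $i$, so by Proposition~\ref{prop:DSFree} they are comparable, say $D_\gamma\le D_\delta$, and $C'=\rows(D_\gamma)\times\cols(D_\delta)$. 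Comparing the two nonempty rectangles, $C\subseteq C'$ forces $\rows(D_\alpha)\subseteq\rows(D_\gamma)$ and $\cols(D_\beta)\subseteq\cols(D_\delta)$, that is, $D_\alpha\le D_\gamma$ and, by the antitone equivalence, $D_\delta\le D_\beta$. Thus $D_\alpha\le D_\gamma\le D_\delta\le D_\beta$, and since $D_\alpha\lessdot D_\beta$ both $D_\gamma,D_\delta$ lie in $\{D_\alpha,D_\beta\}$; as $\gamma\neq\delta$ and $D_\gamma\le D_\delta$ this gives $D_\gamma=D_\alpha$, $D_\delta=D_\beta$, whence $C'=C$, contradicting $C\subsetneq C'$. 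So $C$ is containment-maximal.

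The part I expect to require the most care is setting up and correctly using the rows–columns duality --- especially justifying that an inclusion $C\subseteq C'$ of nonempty rectangles yields the componentwise inclusions, and invoking Proposition~\ref{prop:MaxCliqueCharacterization} to keep the analysis inside $P(j_0)$. The degenerate cases ($C=\emptyset$, $\alpha=\beta$, $|P(j_0)|\le1$) are routine but worth spelling out so that ``maximal among all pairwise intersections of maximal cliques'' is unambiguous.
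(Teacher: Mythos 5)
Your proof is correct and takes essentially the same route as the paper's: both directions rest on Proposition \ref{prop:MaxCliqueCharacterization} to force any competing maximal clique into $P(j_0)$, on the DS-free comparability of row sets, and on the rectangle description $D_{\alpha}\cap D_{\beta}=\rows(D_{\alpha})\times\cols(D_{\beta})$, with an intermediate element $D_{\gamma}$ supplying the contradiction in each direction. Your explicit antitone rows--columns duality and the handling of the degenerate cases ($C=\emptyset$, $\alpha=\beta$) are a more careful packaging of steps the paper carries out implicitly, not a different argument.
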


\begin{proof}
Suppose without loss of generality that $D_{\alpha} \lessdot D_{\beta}$ in $P(j_0)$. 
For the sake of contradiction, suppose that $D_{\alpha} \cap D_{\beta} \not\in \Int(S)$.
Then there exists another maximal clique that contains $D_{\alpha} \cap D_{\beta}$.
By Proposition \ref{prop:MaxCliqueCharacterization} and the fact that $D_{\alpha} \cap D_{\beta}$
intersects $N_{j_0}$ nontrivially, we can write this maximal clique as $D_{\gamma}$ for some $\gamma \in [h]$.

Note that we have $\rows(C) = \rows(D_{\alpha})$ and $\cols(C) = \cols(D_{\beta})$.
Therefore $C = \rows(D_{\alpha}) \times \cols(D_{\beta})$.
So $\rows(D_{\alpha}) \subsetneq \rows(D_{\gamma})$ and $\cols(D_{\beta}) \subsetneq \cols(D_{\gamma})$.
In particular, this second inclusion implies that $\rows(D_{\gamma}) \subsetneq \rows(D_{\beta})$.
Indeed, suppose that $i \leq r$ is a row of $D_{\gamma}$ that is not a row of $D_{\beta}$.
Then there exists a column $j$ of $D_{\beta}$ for which $(i,j) \not\in S$.
But since $j$ is also a column of $D_{\gamma}$, this contradicts that $D_{\gamma}$ is a clique.
So we have the proper containments
\[
\rows(D_{\alpha}) \subsetneq \rows(D_{\gamma}) \subsetneq \rows(D_{\beta}),
\]
which contradicts that $D_{\alpha} \lessdot D_{\beta}$ in $P(j_0)$.
So $D_{\alpha} \cap D_{\beta}$ must be maximal.
%
%
%

Now let $C = D_{\alpha} \cap D_{\beta} \in \Int(S)$.
For the sake of contradiction, suppose that $D_{\alpha}$ does not cover $ D_{\beta}$ or vice versa.
Since $C$ is nonempty, without loss of generality we must have 
$\rows(D_{\alpha}) \subset \rows(D_{\beta})$ by the DS-free condition.
So there exists a $D_{\gamma}$ such that $D_{\alpha} < D_{\gamma} < D_{\beta}$ in $P(j_0)$.
Therefore we have that
\[
\rows(D_{\alpha}) \subsetneq \rows(D_{\gamma}) \subsetneq \rows(D_{\beta}).
\]

Let $(i,j) \in C$. Then $i$ is a row of $D_{\alpha}$, so it is a row of $D_{\gamma}$.
Furthermore, since $j$ is a column of $D_{\beta}$, $\rows(D_{\beta}) \subset \rows(j)$.
So $\rows(D_{\gamma}) \subset \rows(j)$ and $j$ is a column of $D_{\gamma}$.
Therefore, $C \subsetneq D_{\gamma} \cap D_{\beta}$.
This containment is proper since $\rows(D_{\alpha}) \subsetneq \rows(D_{\gamma})$.
So we have contradicted that $C$ is maximal.
\end{proof}

We can now state the key lemma regarding the sum of the $x_{ij}$s over 
$\{ i:  (i,j_0)  \in D_{\alpha} \}$ for any $\alpha \in [h]$.

\begin{lemma}\label{lem:SumInBlock}
Let $S$ be DS-free and let $D_{\alpha}$ be a maximal clique that intersects $N_{j_0}$. Then
\begin{equation}\label{eqn:SumInBlock}
\sum_{i \in \rows(D_{\alpha})} x_{ij_0} = u_{+j_0} 
\Big( \prod_{\substack{C \in \Int(S) \\ C \cap N_{j_0} \neq \emptyset \\ \rows(D_{\alpha}) \subset \rows(C) }} C^+ \Big)
\Big( \prod_{\substack{D \in \Max(S) \\ D \cap N_{j_0} \neq \emptyset \\ \rows(D) \subset \rows(D_{\alpha})}} D^+ \Big)
\Big( \prod_{\substack{E \in \Max(S) \\ N_{j_0} \cap D_{\alpha} \cap E = \emptyset}} E^+ \Big)
\end{equation}
\end{lemma}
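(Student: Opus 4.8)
The plan is to transform both sides of \eqref{eqn:SumInBlock} until everything that does not depend on the summation index $i$ cancels, and to reduce what is left to a local identity at the clique $D_\alpha$ in the tree $P(j_0)$. Write $\le$ for the order on $P(j_0)$. Since the DS-free condition makes the Hasse diagram of $P(j_0)$ a tree, the cliques $\ge D_\alpha$ form a chain $D_\alpha = E_0 \lessdot E_1 \lessdot \dots \lessdot E_c = D_0$, and for each $i \in \rows(D_\alpha)$ the cliques $D_\beta \le D_\alpha$ with $i \in \rows(D_\beta)$ form a saturated chain $\mathrm{ch}_\alpha(i)$ in $P(j_0)$ with top $D_\alpha$. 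First I would use Proposition \ref{prop:MaxCliqueCharacterization} to rewrite the second product of \eqref{eqn:SumInBlock} as $\prod_{D_\beta \le D_\alpha} D_\beta^+$ and the third as the product of $E^+$ over all maximal cliques that either miss $N_{j_0}$ or are incomparable to $D_\alpha$, and Proposition \ref{prop:MaxIntCharacterization} to rewrite the first product as $\prod_{l=0}^{c-1}(E_l \cap E_{l+1})^+$. On the left, for $i \in \rows(D_\alpha)$ one has $\Max(ij_0) = \mathrm{ch}_\alpha(i) \cup \{E_1,\dots,E_c\}$, while $\Int(ij_0)$ is the set of intersections of consecutive members of this chain (namely the $(E_l\cap E_{l+1})$ together with the consecutive intersections inside $\mathrm{ch}_\alpha(i)$). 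Substituting these into the definition of $x_{ij_0}$, summing over $i$, and cancelling $u_{+j_0}$, $\prod_l(E_l\cap E_{l+1})^+$, and the ``incomparable-or-missing'' product — none of which involve $i$ — would reduce \eqref{eqn:SumInBlock} to the identity
\[
\sum_{i \in \rows(D_\alpha)} u_{i+} \Bigl( \prod_{D_\gamma \lessdot D_\delta \text{ in } \mathrm{ch}_\alpha(i)} (D_\gamma \cap D_\delta)^+ \Bigr) \Bigl( \prod_{\substack{D_\beta \le D_\alpha \\ i \notin \rows(D_\beta)}} D_\beta^+ \Bigr) = \prod_{D_\beta \le D_\alpha} D_\beta^+ .
\]

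I would prove this by induction on the number of cliques $\le D_\alpha$. Let $D_{\gamma_1},\dots,D_{\gamma_t}$ be the children of $D_\alpha$; the tree structure gives a disjoint decomposition $\rows(D_\alpha) = R_\alpha \sqcup \rows(D_{\gamma_1}) \sqcup \dots \sqcup \rows(D_{\gamma_t})$ with $R_\alpha := \rows(D_\alpha) \setminus \bigcup_s \rows(D_{\gamma_s})$. For $i \in R_\alpha$ one has $\mathrm{ch}_\alpha(i) = \{D_\alpha\}$, while for $i \in \rows(D_{\gamma_s})$ the chain $\mathrm{ch}_\alpha(i)$ is $\{D_\alpha\}$ together with the analogous chain inside the subtree below $D_{\gamma_s}$, contributing one extra factor $(D_{\gamma_s} \cap D_\alpha)^+$. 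Splitting the sum along this decomposition, pulling out of the $D_{\gamma_s}$-block the factor $(D_{\gamma_s} \cap D_\alpha)^+$ together with the products of $D_\beta^+$ over cliques below the other children, and applying the inductive hypothesis to each $D_{\gamma_s}$, the left side would collapse to $\bigl(\sum_{i \in R_\alpha} u_{i+} + \sum_s (D_{\gamma_s}\cap D_\alpha)^+\bigr)\prod_s\prod_{D_\beta \le D_{\gamma_s}} D_\beta^+$; comparing with the right side $D_\alpha^+\prod_s\prod_{D_\beta\le D_{\gamma_s}}D_\beta^+$, the induction would come down to the local identity
\[
\sum_{i \in R_\alpha} u_{i+} + \sum_{s=1}^{t}(D_{\gamma_s}\cap D_\alpha)^+ = D_\alpha^+ ,
\]
whose $t=0$ case is the base of the induction.

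To prove this local identity I would write $D_\alpha = \rows(D_\alpha)\times\cols(D_\alpha)$, expand $D_\alpha^+ = \sum_{i\in\rows(D_\alpha)}\sum_{j\in\cols(D_\alpha)}u_{ij}$, and split the outer sum along $\rows(D_\alpha)=R_\alpha\sqcup\bigsqcup_s\rows(D_{\gamma_s})$. For a child $D_{\gamma_s}\lessdot D_\alpha$ one has $\cols(D_\alpha)\subseteq\cols(D_{\gamma_s})$, so $D_{\gamma_s}\cap D_\alpha=\rows(D_{\gamma_s})\times\cols(D_\alpha)$ and the $\rows(D_{\gamma_s})$-pieces match the terms $(D_{\gamma_s}\cap D_\alpha)^+$ exactly. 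What remains is to show that for every $i\in R_\alpha$ the whole $i$-th row of $S$ is supported on $\cols(D_\alpha)$, i.e. $u_{i+}=\sum_{j\in\cols(D_\alpha)}u_{ij}$. If instead some $(i,j^\ast)\in S$ had $\rows(D_\alpha)\not\subseteq\rows(j^\ast)$, then the block of $j^\ast$ would induce a maximal clique $D_\gamma$ with $\rows(D_\gamma)=\rows(j^\ast)$ containing $i$; since $\rows(D_\gamma)\cap\rows(D_\alpha)\ni i$ with $\rows(D_\alpha)\not\subseteq\rows(D_\gamma)$, Proposition \ref{prop:DSFree} would force $\rows(D_\gamma)\subseteq\rows(D_\alpha)$, and then a proper containment places $D_\gamma$ below some child $D_{\gamma_s}$, contradicting $i\in R_\alpha$, while equality gives $\rows(j^\ast)=\rows(D_\alpha)$, contradicting the choice of $j^\ast$.

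The hard part is really the first step: matching up the three products on the right of \eqref{eqn:SumInBlock} with the pieces of $\sum_i x_{ij_0}$ so that exactly the $i$-independent factors cancel is pure bookkeeping but is easy to botch. The only genuinely non-formal input is the support claim in the last paragraph — that each row indexed by $R_\alpha$ lies entirely in $\cols(D_\alpha)$ — and this is precisely where the doubly chordal bipartite hypothesis, through the DS-free condition of Proposition \ref{prop:DSFree}, is indispensable.
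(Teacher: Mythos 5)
Your proposal is correct and follows essentially the same route as the paper: a bottom-up induction over the part of $P(j_0)$ below $D_{\alpha}$, using the partition of $\rows(D_{\alpha})$ into the children's rows plus the leftover rows (Remark \ref{rmk:InductionIdea}) and the same local identity $D_{\alpha}^+=\sum_{i\in R_{\alpha}}u_{i+}+\sum_{s}(D_{\gamma_s}\cap D_{\alpha})^+$ (Proposition \ref{prop:SumOfUnfactoredTerms}), proved in the same way from the DS-free condition. The only difference is organizational: you cancel the $i$-independent factors once at the outset via the saturated-chain description of $\Max(ij_0)$ and $\Int(ij_0)$, whereas the paper carries the full products through the induction and matches them stepwise via Propositions \ref{prop:IntersectionsFactorOut}, \ref{prop:MaxCliquesFactorOut} and \ref{prop:SumExtraEntries}.
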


In order to prove this, we will sum the entries $x_{ij_0}$ over all $i \in \rows( D_{\beta})$ for each $\beta$. We will do this inductively from the bottom of $P(j_0)$.
The key idea of this induction is as follows. 

\begin{rmk}\label{rmk:InductionIdea}
If $D_{\alpha_1}, \dots, D_{\alpha_{\ell}}$ are covered by $D_{\beta}$ in $P(j_0)$, 
then the rows of $N_{j_0} \cap D_{\beta}$ are partitioned by each $N_{j_0} \cap D_{\alpha_k}$
along with the set of rows that are in $D_{\beta}$ and not in any $D_{\alpha_k}$. The fact that this is a partition follows from the DS-free condition. 
Therefore, summing the $x_{ij_0}$ that belong to each clique covered by $D_{\beta}$ and adding in the $x_{ij_0}$s for rows $i$ that are not in any clique covered by $D_{\beta}$
will give us the sum of $x_{ij_0}$ over all $i \in \rows(D_{\beta})$.
\end{rmk}

The next proposition focuses on the factors of the right-hand side of Equation (\ref{eqn:SumInBlock}) that correspond to elements of $\Int(S)$.
It will be used to show that when we perform the induction and move upwards by one cover relation from $D_{\alpha}$ to $D_{\beta}$ in the poset $P(j_0)$,
all but one of these factors stays the same. The only one that no longer appears in the product corresponds to the maximal intersection $D_{\alpha} \cap D_{\beta}$.

\begin{prop}\label{prop:IntersectionsFactorOut}
Let $D_{\alpha} \lessdot D_{\beta}$ in $P(j_0)$. 
Let $C \in \Int(S)$ intersect $N_{j_0}$ nontrivially so that $\rows(D_{\alpha}) \subset \rows(C)$. 
Then either $C = D_{\alpha} \cap D_{\beta}$ or $\rows(D_{\beta}) \subset \rows(C)$.
\end{prop}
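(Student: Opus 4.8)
The plan is to deduce the statement from Proposition~\ref{prop:MaxIntCharacterization} together with the fact (noted when $P(j_0)$ was introduced) that the Hasse diagram of $P(j_0)$ is a tree, so that each $D_\alpha$ is covered by at most one element. Since $C \in \Int(S)$ meets $N_{j_0}$ nontrivially, every maximal clique containing $C$ also meets $N_{j_0}$ and is therefore one of $D_0,\dots,D_h$ by Proposition~\ref{prop:MaxCliqueCharacterization}; hence Proposition~\ref{prop:MaxIntCharacterization} lets me write $C = D_\gamma \cap D_\delta$ for a cover relation $D_\gamma \lessdot D_\delta$ in $P(j_0)$, after relabeling so that $D_\gamma$ is the smaller element. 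By the observation immediately preceding Proposition~\ref{prop:MaxIntCharacterization}, $C = \rows(D_\gamma) \times \cols(D_\delta)$, so $\rows(C) = \rows(D_\gamma)$, and the hypothesis $\rows(D_\alpha) \subset \rows(C)$ becomes $D_\alpha \le D_\gamma$ in $P(j_0)$. I will then split on whether this relation is an equality.

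If $D_\alpha = D_\gamma$, then $D_\alpha \lessdot D_\delta$; since $D_\alpha$ is covered by at most one element of $P(j_0)$ and also $D_\alpha \lessdot D_\beta$ by hypothesis, this forces $D_\delta = D_\beta$, whence $C = D_\alpha \cap D_\beta$, the first alternative. If instead $D_\alpha < D_\gamma$ strictly, then $\rows(D_\alpha) \subsetneq \rows(D_\gamma)$, and also $\rows(D_\alpha) \subsetneq \rows(D_\beta)$ because $D_\alpha \lessdot D_\beta$. As $D_\alpha$ meets $N_{j_0}$, the set $\rows(D_\alpha)$ is nonempty, so $\rows(D_\gamma) \cap \rows(D_\beta) \ne \emptyset$, and the DS-free condition (Proposition~\ref{prop:DSFree}) forces one of $\rows(D_\gamma), \rows(D_\beta)$ to contain the other. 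If $\rows(D_\gamma) \subsetneq \rows(D_\beta)$ we would get the chain $D_\alpha < D_\gamma < D_\beta$ in $P(j_0)$, contradicting the cover relation $D_\alpha \lessdot D_\beta$; hence $\rows(D_\beta) \subseteq \rows(D_\gamma) = \rows(C)$, the second alternative.

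The only delicate points are the bookkeeping of the labeling of the cover relation that produces $C$ and keeping track of which inclusions are strict; the substance is carried entirely by the tree structure of $P(j_0)$ and the DS-free dichotomy, so I do not expect a genuine obstacle. The one item worth stating explicitly is that a $C \in \Int(S)$ meeting $N_{j_0}$ is indeed a \emph{pairwise} intersection $D_\gamma \cap D_\delta$ of cliques from the list $D_0,\dots,D_h$ rather than an intersection of more of them: this holds because all maximal cliques through $C$ meet $N_{j_0}$, and Proposition~\ref{prop:MaxIntCharacterization} matches the containment-maximal pairwise intersections among $D_0,\dots,D_h$ with the cover relations of $P(j_0)$, so the maximal intersection $C$ must be realized by such a cover.
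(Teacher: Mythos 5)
Your proof is correct and uses essentially the same ingredients as the paper's: writing $C$ as an intersection over a cover relation in $P(j_0)$ via Propositions \ref{prop:MaxCliqueCharacterization} and \ref{prop:MaxIntCharacterization}, then combining the DS-free dichotomy with the fact that each element of $P(j_0)$ is covered by at most one element. The only difference is organizational (you case-split on whether $\rows(D_\alpha)=\rows(C)$, while the paper argues contrapositively by assuming $\rows(D_\beta)\not\subset\rows(C)$), so the two arguments are essentially identical.
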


\begin{proof}
Without loss of generality, let $C = D_1 \cap D_2$. 
Proposition \ref{prop:MaxIntCharacterization} tells us that $C$ must be of this form. 
By the same proposition, we may assume without loss of generality that $D_2 \lessdot D_1$,
so $\rows(C) = \rows(D_2)$.
Suppose that $\rows(D_{\beta}) \not\subset \rows(D_2)$. 
Since $\rows(D_{\alpha}) \subset \rows(D_2)$ and $\rows(D_{\alpha}) \subset \rows(D_{\beta})$,
we must have that $\rows(D_2) \cap \rows(D_{\beta})$ is nonempty.
So by the DS-free condition, $\rows(D_2) \subsetneq \rows(D_{\beta})$.
So we have the chain of inclusions,
\[
\rows(D_{\alpha}) \subset \rows(D_2) \subsetneq \rows(D_{\beta}).
\]
But since $D_{\beta}$ covers $D_{\alpha}$ in $P(j_0)$,
and every element of $P(j_0)$ is covered by at most one element,
this implies that $\alpha = 1$ and $\beta = 2$, so $C = D_{\alpha} \cap D_{\beta}$, as needed.
\end{proof}

The following proposition focuses on the factors of the right-hand side of Equation (\ref{eqn:SumInBlock}) that correspond to elements of $\Max(S)$.
It gives a correspondence between the factors of this product for $D_{\alpha}$ and all but one of the factors of this product for $D_{\beta}$ when we have the cover relation $D_{\alpha} \lessdot D_{\beta}$ in $P(j_0)$.

\begin{prop}\label{prop:MaxCliquesFactorOut}
For any $D_{\alpha}, D_{\beta} \in P(j_0)$, define the following sets:
\begin{align*}
R_{\alpha} &= \{D\in \Max(S) \mid D \cap N_{j_0} \neq \emptyset, \rows(D) \subset \rows(D_{\alpha}) \}  \cup \{E \in \Max(S) \mid N_{j_0} \cap D_{\alpha} \cap E = \emptyset \} \\
\overline{R}_{\beta} &= \{D \in \Max(S) \mid D \cap N_{j_0} \neq \emptyset, \rows(D) \subsetneq \rows(D_{\beta}) \} \cup \{E \in \Max(S) \mid N_{j_0} \cap D_{\beta} \cap E = \emptyset \}.
\end{align*}
If $D_{\alpha} \lessdot D_{\beta}$ in $P(j_0)$, then $R_{\alpha} = \overline{R}_{\beta}$.
\end{prop}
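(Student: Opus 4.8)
The plan is to show the two sets $R_\alpha$ and $\overline R_\beta$ coincide by proving each of the two defining ``clauses'' on each side matches up with a clause on the other side, using the cover relation $D_\alpha \lessdot D_\beta$ together with the DS-free condition (Proposition~\ref{prop:DSFree}) and the characterization of maximal cliques meeting $N_{j_0}$ (Proposition~\ref{prop:MaxCliqueCharacterization}). The key structural facts I would isolate first are: (i) since $D_\alpha \lessdot D_\beta$, we have $\rows(D_\alpha) \subsetneq \rows(D_\beta)$ with nothing strictly in between \emph{among the} $D_\gamma$, and $\cols(D_\beta) \subseteq \cols(D_\alpha)$; and (ii) by Proposition~\ref{prop:MaxCliqueCharacterization}, every maximal clique $E$ meeting $N_{j_0}$ equals some $D_\gamma$, and for such $E = D_\gamma$ the set $\rows(E)$ is linearly ordered relative to both $\rows(D_\alpha)$ and $\rows(D_\beta)$ whenever the intersections are nonempty (DS-free). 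So a maximal clique $E$ falls into exactly one of the two clauses on each side, and the dichotomy ``$E$ meets $N_{j_0}$ or not'' lets me split into cases cleanly.

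First I would handle maximal cliques $E$ that do \emph{not} meet $N_{j_0}$. For such $E$, both $N_{j_0}\cap D_\alpha\cap E$ and $N_{j_0}\cap D_\beta\cap E$ are empty (they are contained in $N_{j_0}\cap E=\emptyset$), so $E$ lies in the second clause of $R_\alpha$ and in the second clause of $\overline R_\beta$ simultaneously; these contribute identically to both sides. Next, for $E$ that \emph{does} meet $N_{j_0}$, write $E = D_\gamma$. I need: $D_\gamma \in R_\alpha$ iff $D_\gamma \in \overline R_\beta$. The forward direction splits into the two clauses. If $\rows(D_\gamma)\subset\rows(D_\alpha)$, then $\rows(D_\gamma)\subset\rows(D_\alpha)\subsetneq\rows(D_\beta)$, so $D_\gamma$ is in the first clause of $\overline R_\beta$. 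If instead $N_{j_0}\cap D_\alpha\cap D_\gamma=\emptyset$, I claim $N_{j_0}\cap D_\beta\cap D_\gamma=\emptyset$ as well: since $D_\gamma$ meets $N_{j_0}$, the intersection $\rows(D_\alpha)\cap\rows(D_\gamma)$ being empty forces, by DS-free applied inside $\rows(j_0)$, that $\rows(D_\alpha)$ and $\rows(D_\gamma)$ are disjoint sets of rows of column $j_0$; and $N_{j_0}\cap D_\gamma\cap D_\beta$ has row-set $\rows(D_\gamma)\cap\rows(D_\beta)$, so I must rule out this being nonempty. Here I would use that $\rows(D_\alpha)\subsetneq\rows(D_\beta)$ and that, were $\rows(D_\gamma)\cap\rows(D_\beta)$ nonempty, DS-free would nest $\rows(D_\gamma)$ and $\rows(D_\beta)$; combined with $\rows(D_\gamma)\cap\rows(D_\alpha)=\emptyset$ and the cover relation (no clique strictly between $D_\alpha$ and $D_\beta$), this yields a contradiction unless $\rows(D_\gamma)\cap\rows(D_\beta)=\emptyset$, placing $D_\gamma$ in the second clause of $\overline R_\beta$.

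The reverse direction is symmetric in flavor but is where the cover hypothesis really bites: given $D_\gamma\in\overline R_\beta$ I must push it into $R_\alpha$. If $\rows(D_\gamma)\subsetneq\rows(D_\beta)$ and $D_\gamma$ meets $N_{j_0}$, then by DS-free $\rows(D_\gamma)$ is comparable to $\rows(D_\alpha)$ (their intersection contains a row of $j_0$ unless disjoint); if $\rows(D_\gamma)\subseteq\rows(D_\alpha)$ we land in the first clause of $R_\alpha$, and if $\rows(D_\alpha)\subsetneq\rows(D_\gamma)$ then $\rows(D_\alpha)\subsetneq\rows(D_\gamma)\subsetneq\rows(D_\beta)$ contradicts $D_\alpha\lessdot D_\beta$ — so the only remaining possibility is $\rows(D_\gamma)\cap\rows(D_\alpha)=\emptyset$, giving the second clause of $R_\alpha$. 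The case $N_{j_0}\cap D_\beta\cap D_\gamma=\emptyset$ is handled by the same disjointness argument run in reverse. I would package these case analyses so that the ``missing'' element — the single maximal clique in $\overline R_\beta$ but not accounted for, or vice versa — is exactly $D_\beta$ itself, i.e.\ $\overline R_\beta$ excludes $D_\beta$ (strict containment $\rows(D)\subsetneq\rows(D_\beta)$) while $R_\alpha$ need not, and checking $D_\beta\notin R_\alpha$ (since $\rows(D_\beta)\not\subset\rows(D_\alpha)$ and $N_{j_0}\cap D_\alpha\cap D_\beta\neq\emptyset$ because $D_\alpha\lessdot D_\beta$ means $\rows(D_\alpha)\subset\rows(D_\beta)$, both meeting $N_{j_0}$).

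The main obstacle I anticipate is the bookkeeping in the second clause — controlling when $N_{j_0}\cap D_\alpha\cap E=\emptyset$ transfers to $N_{j_0}\cap D_\beta\cap E=\emptyset$ and back — because this is precisely where one must use that $D_\beta$ \emph{covers} $D_\alpha$ (not merely $D_\alpha<D_\beta$) to exclude an intermediate clique $D_\gamma$. Everything else is a careful but routine translation between cliques and their row-sets via Propositions~\ref{prop:DSFree} and~\ref{prop:MaxCliqueCharacterization}. I would present the argument by fixing an arbitrary $E\in\Max(S)$, splitting on whether $E\cap N_{j_0}=\emptyset$, and in the nonempty case writing $E=D_\gamma$ and exhausting the (few) possibilities for how $\rows(D_\gamma)$ sits relative to the nested pair $\rows(D_\alpha)\subsetneq\rows(D_\beta)$.
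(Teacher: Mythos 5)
Your overall strategy is the same as the paper's: fix a maximal clique $E$, split on whether $E\cap N_{j_0}=\emptyset$, and when it is nonempty write $E=D_\gamma$ and analyze how $\rows(D_\gamma)$ sits relative to the nested pair $\rows(D_\alpha)\subsetneq\rows(D_\beta)$ via the DS-free condition; your reverse direction (including using the cover relation to exclude $\rows(D_\alpha)\subsetneq\rows(D_\gamma)\subsetneq\rows(D_\beta)$) matches the paper's argument exactly. However, one step of your forward direction is wrong as stated. When $E$ meets $N_{j_0}$ and $N_{j_0}\cap D_\alpha\cap E=\emptyset$, you claim that necessarily $N_{j_0}\cap D_\beta\cap E=\emptyset$, arguing that $\rows(E)\cap\rows(D_\beta)\neq\emptyset$ would force a nesting contradicting $D_\alpha\lessdot D_\beta$. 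There is no such contradiction: the cover relation only forbids cliques whose row set lies strictly between $\rows(D_\alpha)$ and $\rows(D_\beta)$, whereas here $\rows(E)$ is \emph{disjoint} from $\rows(D_\alpha)$, so $\rows(E)\subsetneq\rows(D_\beta)$ is entirely possible ($E$ hangs on a different branch below $D_\beta$ in $P(j_0)$). Concretely, in the paper's running example with $j_0=1$, take $D_\alpha=D_3$, $D_\beta=D_2$, $E=D_6=\{21,22,23,28\}$: then $N_{j_0}\cap D_3\cap D_6=\emptyset$, yet $N_{j_0}\cap D_2\cap D_6=\{21\}\neq\emptyset$, so the intermediate claim you make is false.

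The gap is easily repaired, and the paper's proof shows how: in this sub-case one should not force $E$ into the second clause of $\overline{R}_\beta$. If $D_\beta\cap E=\emptyset$, the second clause applies; otherwise DS-free together with $\rows(D_\beta)\not\subset\rows(E)$ (which holds because $\rows(D_\beta)\supset\rows(D_\alpha)$ while $\rows(E)\cap\rows(D_\alpha)=\emptyset$) gives $\rows(E)\subsetneq\rows(D_\beta)$, so $E$ lands in the \emph{first} clause of $\overline{R}_\beta$. With that single case corrected, your proof coincides with the paper's.
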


\begin{proof}
First let $D \in R_{\alpha}$.
If $\rows(D) \subset \rows(D_{\alpha})$ and $D \cap N_{j_0} \neq \emptyset$, then since $\rows(D_{\alpha}) \subsetneq \rows(D_{\beta})$,
we have that $\rows(D) \subsetneq \rows(D_{\beta})$.
So $D \in \overline{R}_{\beta}$.

Otherwise, we have $N_{j_0} \cap D_{\alpha} \cap D = \emptyset$. There are now two cases.

\emph{Case 1:} If $N_{j_0} \cap D = \emptyset$, then $N_{j_0} \cap D_{\beta} \cap D = \emptyset$ as well.
So $D \in \overline{R}_{\beta}$.

\emph{Case 2:} Suppose that $N_{j_0} \cap D \neq \emptyset$ and $D_{\alpha} \cap D = \emptyset$.
If $D_{\beta} \cap D$ is empty as well, then $D \in \overline{R}_{\beta}$.

Otherwise, suppose $D_{\beta} \cap D \neq \emptyset$.
Then we must have that $\rows(D) \subsetneq \rows(D_{\beta})$
by the fact that $\rows(D_{\beta}) \not\subset \rows(D)$ and the DS-free condition.
So $D \in \overline{R}_{\beta}$ in this case as well.
Note that it is never the case that $N_{j_0} \cap D \neq \emptyset$ and
 $D_{\alpha} \cap D \neq \emptyset$ but $N_{j_0} \cap D \cap D_{\alpha} = \emptyset$
since $j_0$ is a column of $D_{\alpha}$.
So we have shown that $R_{\alpha} \subset \overline{R}_{\beta}$.

Now let $D \in \overline{R}_{\beta}$. We have two cases again.

\emph{Case 1:} First, consider the case in which $\rows(D) \subsetneq \rows(D_{\beta})$ and $D \cap N_{j_0} \neq \emptyset$.
If $\rows(D) \subset \rows(D_{\alpha})$, then $D \in R_{\alpha}$, as needed.
Otherwise, by the DS-free condition, there are two cases.

\emph{Case 1a:} If $\rows(D_{\alpha}) \subsetneq \rows(D)$, then we have the chain of containments,
\[
\rows(D_{\alpha}) \subsetneq \rows(D) \subsetneq \rows(D_{\beta}),
\]
which contradicts that $D_{\alpha} \lessdot D_{\beta}$ in $P(j_0)$. So this case cannot actually occur.

\emph{Case 1b:} If $\rows(D_{\alpha}) \cap \rows(D) = \emptyset $,
then we have that $N_{j_0} \cap D_{\alpha} \cap D = \emptyset $.
Therefore, $D \in R_{\alpha}$, as needed.

\emph{Case 2:} The final case is when $N_{j_0} \cap D_{\beta} \cap D = \emptyset $.
In this case, since $\rows(D_{\alpha}) \subset \rows(D_{\beta})$,
we have that $N_{j_0} \cap D_{\alpha} \cap D = \emptyset $ as well.
So $D \in R_{\alpha}$.
So we have shown that $\overline{R}_{\beta} \subset R_{\alpha}$, as needed.
\end{proof}

\begin{rmk}\label{rmk:MaxCliquesFactorOut}
Note that Proposition \ref{prop:MaxCliquesFactorOut} implies that whenever $D_{\alpha}$ and $D_{\beta}$
are covered by the same element of $P(j_0)$,
we have that $R_{\alpha} = R_{\beta}$.
This shows that the left-hand side of Equation (\ref{eqn:SumInBlock}) for $D_{\alpha}$ and $D_{\beta}$ consist of the same terms that come from cliques in $\Max(S)$.
\end{rmk}

Let $D_{\alpha_1}, \dots, D_{\alpha_{\ell}} \lessdot D_{\beta}$. As we discussed in Remark \ref{rmk:InductionIdea}, in order to sum the values of $x_{ij_0}$ over $N_{j_0} \cap D_{\beta}$, we must understand the sum over $x_{ij_0}$ for those rows $i$ such that $i \in \rows(D_{\beta})$ but $i \not\in \rows(D_{\alpha_k})$ for all $k$. The following proposition concerns the sum of the $x_{ij_0}$ over these values of $i$. 

\begin{prop}\label{prop:SumExtraEntries}
Let $D_{\alpha_1}, \dots, D_{\alpha_{\ell}} \lessdot D_{\beta}$. Let $r_1, \dots, r_a$ be the rows of $D_{\beta}$ that are not in any $D_{\alpha_k}$ for $k=1, \dots, \ell$. Then
\begin{equation}\label{eqn:SumExtraEntries}
\sum_{i=1}^a  x_{r_i j_0} = 
 u_{+j_0} \Big( \prod_{\substack{C \in \Int(S) \\ C \cap N_{j_0} \neq \emptyset \\ \rows(D_{\beta}) \subset \rows(C)}}C^+ \Big)
\Big(\prod_{\substack{D \in \Max(S) \\  D \cap N_{j_0} \neq \emptyset \\ \rows(D) \subsetneq \rows(D_{\beta})}} D^+ \Big)
\Big(\prod_{\substack{ E \in \Max(S) \\ N_{j_0} \cap D_{\beta} \cap E = \emptyset}} E^+\Big)
\Big(\sum_{i=1}^a u_{r_i +}\Big)
\end{equation}
\end{prop}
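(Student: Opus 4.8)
The plan is to fix a single extra row, show that $x_{r_i j_0}$ factors as $u_{r_i+}$ times a quantity that does not depend on $i$, and then sum over $i$. Recall that
\[
x_{r_i j_0} = u_{r_i+}\, u_{+ j_0}\, \prod_{C \in \Int(r_i j_0)} C^+ \prod_{D \in \Max(S)\setminus\Max(r_i j_0)} D^+,
\]
so it suffices to show that the first product on the right equals the $\Int(S)$-product in (\ref{eqn:SumExtraEntries}) and that the second product equals the product of the two $\Max(S)$-indexed factors in (\ref{eqn:SumExtraEntries}) --- and in particular that both are independent of $i$.

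The first step is to describe $\Max(r j_0)$ for an extra row $r$ of $D_\beta$ (so $r \in \rows(D_\beta)$ but $r \notin \rows(D_{\alpha_k})$ for every $k$). By Proposition \ref{prop:MaxCliqueCharacterization} every element of $\Max(r j_0)$ is some induced clique $D_\gamma$, and I claim $r \in \rows(D_\gamma)$ if and only if $D_\gamma \geq D_\beta$ in $P(j_0)$. The backward direction is immediate. For the forward direction, $r \in \rows(D_\gamma) \cap \rows(D_\beta) \neq \emptyset$ forces $\rows(D_\gamma) \subseteq \rows(D_\beta)$ or $\rows(D_\beta) \subseteq \rows(D_\gamma)$ by Proposition \ref{prop:DSFree}; in the first case $D_\gamma \leq D_\beta$, and if this were strict then a saturated chain from $D_\gamma$ up to $D_\beta$ would have as its penultimate element some $D_{\alpha_k}$, giving $r \in \rows(D_\gamma) \subseteq \rows(D_{\alpha_k})$ and contradicting that $r$ is extra. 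Hence $\Max(r j_0) = \{D_\gamma : D_\gamma \geq D_\beta\}$, which by the tree structure of $P(j_0)$ is a single saturated chain $D_\beta = D_{\gamma_0} \lessdot D_{\gamma_1} \lessdot \cdots \lessdot D_{\gamma_t}$ running up to a maximal element of $P(j_0)$; this chain depends only on $D_\beta$.

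Next I would identify the two products. Along the chain $D_{\gamma_0}, \dots, D_{\gamma_t}$ the row sets strictly increase and (by the remark preceding Proposition \ref{prop:MaxIntCharacterization}) the column sets decrease, so $D_{\gamma_s} \cap D_{\gamma_{s'}} = \rows(D_{\gamma_s}) \times \cols(D_{\gamma_{s'}})$ for $s < s'$, and the containment-maximal pairwise intersections are the consecutive ones $D_{\gamma_s} \cap D_{\gamma_{s+1}}$, $s = 0,\dots,t-1$; these meet $N_{j_0}$, hence lie in $\Int(S)$ by Proposition \ref{prop:MaxIntCharacterization}. Conversely, by that proposition any element of $\Int(S)$ that meets $N_{j_0}$ and has $\rows(D_\beta) \subseteq \rows(C)$ is of the form $D_\delta \cap D_{\delta'}$ with $D_\delta \lessdot D_{\delta'}$ and $D_\beta \leq D_\delta$, hence equals some $D_{\gamma_s} \cap D_{\gamma_{s+1}}$; so $\Int(r j_0) = \{C \in \Int(S) : C \cap N_{j_0} \neq \emptyset,\ \rows(D_\beta)\subseteq\rows(C)\}$, giving the first product in (\ref{eqn:SumExtraEntries}). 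For the second, I would sort $\Max(S) \setminus \Max(r j_0)$ into (a) the $D_\gamma$ with $D_\gamma < D_\beta$, which are exactly the maximal cliques meeting $N_{j_0}$ with $\rows(D_\gamma) \subsetneq \rows(D_\beta)$; (b) the $D_\gamma$ incomparable to $D_\beta$, which by Proposition \ref{prop:DSFree} have $\rows(D_\gamma) \cap \rows(D_\beta) = \emptyset$ and hence $N_{j_0} \cap D_\beta \cap D_\gamma = \emptyset$; and (c) the maximal cliques disjoint from $N_{j_0}$, which trivially satisfy $N_{j_0} \cap D_\beta \cap E = \emptyset$. Using that $j_0$ is a column of $D_\beta$ (and of every induced clique) one checks that (b) and (c) together form precisely $\{E \in \Max(S) : N_{j_0} \cap D_\beta \cap E = \emptyset\}$, that (a) is disjoint from it, and therefore that $\Max(S)\setminus\Max(r j_0)$ is exactly the set $\overline{R}_\beta$ of Proposition \ref{prop:MaxCliquesFactorOut}; this gives the product of the two $\Max(S)$-indexed factors in (\ref{eqn:SumExtraEntries}).

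Combining, for each $i$ we obtain $x_{r_i j_0} = u_{r_i+}\, Q$, where $Q$ is the right-hand side of (\ref{eqn:SumExtraEntries}) with the factor $\sum_{i} u_{r_i+}$ deleted, and $Q$ is independent of $i$; summing over $i = 1, \dots, a$ gives the claim, with both sides vanishing when $a = 0$. I expect the main obstacle to be the bookkeeping of the previous paragraph --- pinning down $\Max(r j_0)$ exactly for an extra row using the DS-free condition, and then matching $\Max(S) \setminus \Max(r j_0)$ against the union of the two index sets in the target formula while verifying independence of $i$ at each step.
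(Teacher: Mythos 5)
Your proposal is correct and follows essentially the same route as the paper's proof: for each extra row you show that the factors of $x_{r_i j_0}$ coming from $\Int(r_i j_0)$ are exactly the $C \in \Int(S)$ meeting $N_{j_0}$ with $\rows(D_\beta) \subseteq \rows(C)$, and those from $\Max(S)\setminus\Max(r_i j_0)$ are exactly the cliques in the two $\Max$-indexed sets, so the common factor pulls out of the sum --- precisely the paper's argument via Propositions \ref{prop:DSFree}, \ref{prop:MaxCliqueCharacterization}, and \ref{prop:MaxIntCharacterization}. Your only organizational difference is to first identify $\Max(r_i j_0)$ as the chain above $D_\beta$ in $P(j_0)$ and to recognize the complement as $\overline{R}_\beta$, which is a clean but equivalent bookkeeping of the same steps.
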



\begin{proof}
Without loss of generality, we will let $D_2, \dots, D_{\ell} \lessdot D_{1}$, 
and let rows $1, \dots, a$ be the rows of $D_{1}$ that are not rows of any $D_\alpha$ for $\alpha = 2,\dots,\ell$. 
Let $i  \in [a]$.
Recall that
\[
x_{ij_0} = u_{i+}u_{+j_0}\displaystyle{\prod_{C \in \Int(ij_0)} C^+}\displaystyle{ \prod_{D \in \Max(S) \setminus \Max(ij_0)}D^+}.
\] 
We first consider the maximum cliques $D$ with $(i,j_0) \not\in D$. 
If $N_{j_0} \cap D_{1} \cap D = \emptyset$, then $D^+$ is a term of $x_{ij_0}$ for all $i = 1,\dots,a$.
So $D^+$ is a factor of both the left-hand and right-hand sides of Equation (\ref{eqn:SumExtraEntries}).

Otherwise, we have $N_{j_0} \cap D_1 \cap D \neq \emptyset$.
In particular, this means that $N_{j_0} \cap D \neq \emptyset$.
Since $\rows(D_1) \cap \rows(D) \neq \emptyset$, and $(i,j_0) \not\in D$,
we must have $\rows(D) \subsetneq \rows(D_1)$ by the DS-free condition.
Furthermore, for all maximal cliques $D$ with $\rows(D) \subsetneq \rows(D_1)$,
 we have $(i,j_0) \not\in D$.
Indeed, $D \cap N_{j_0} \neq \emptyset$, so by Proposition \ref{prop:MaxCliqueCharacterization}, 
we have $D = D_{\gamma}$ for some $\gamma$
with $D_{\gamma} < D_1$ in $P(j_0)$.
So $\rows(D_{\gamma}) \subset \rows(D_{\alpha})$ for some $\alpha \in \{2,\dots,\ell\}$.
Since $(i,j_0) \not\in D_{\alpha}$, we have $(i,j_0) \not\in D_{\gamma}$ as well.

Therefore, the factors $D^+$ corresponding to maximal cliques in each $x_{ij_0}$ are the same for all $i \in [a]$,
and are exactly those with $\rows(D) \subsetneq \rows(D_1)$ or $N_{j_0} \cap D_1 \cap D = \emptyset$.

Now let $(i,j_0) \in C$ where $C \in \Int(S)$ and $C \cap N_{j_0} \neq \emptyset$. 
By Proposition \ref{prop:MaxIntCharacterization}, we have $C = D_{\gamma} \cap D_{\delta}$ 
where $D_{\gamma} \lessdot D_{\delta}$ in $P(j_0)$. 
Since $C \cap D_1$ is nonempty, we must have that
$\rows(D_{\gamma}) \subset \rows(D_1)$ or $\rows(D_1) \subset \rows(D_{\gamma})$,
and similarly for $D_{\delta}$.
But since $i \in [a]$, $(i,j_0) \not\in D_{\alpha}$ for any $D_{\alpha} < D_1$.
So we must have $\rows(D_1) \subset \rows(D_{\gamma}) \subset \rows(D_{\delta}).$
Therefore, $\rows(D_1) \subset \rows(C)$.

Furthermore, we have that $(i,j_0) \in C$ for all $C \in \Int(S)$ with $\rows(D_1) \subset \rows(C)$.
So the factors $C^+$ corresponding to maximal intersections of maximal cliques are exactly
those with $\rows(D_1) \subset \rows(C)$ in each $x_{ij_0}$.

Therefore, we have that
\begin{align*}
\sum_{i=1}^a x_{ij_0} & =  \sum_{i=1}^a u_{i+}u_{+j_0}\Big(\prod_{C \in \Int(ij_0)} C^+ \Big) \Big(\prod_{D \in \Max(S) \setminus \Max(ij_0)}D^+\Big) \\
&= \Big( \prod_{\substack{C \in \Int(S)  \\ C \cap N_{j_0} \neq \emptyset \\ \rows(D_1) \subset \rows(C)}}C^+ \Big)
\Big(\prod_{\substack{D \in \Max(S) \\  D \cap N_{j_0} \neq \emptyset \\ \rows(D) \subsetneq \rows(D_1)}} D^+ \Big)
\Big(\prod_{\substack{ E \in \Max(S) \\ N_{j_0} \cap D_1 \cap E = \emptyset}} E^+\Big)
\Big(\sum_{i=1}^a u_{i +} u_{+j_0}\Big) \\
&= u_{+j_0}\Big( \prod_{\substack{C \in \Int(S)  \\ C \cap N_{j_0} \neq \emptyset \\ \rows(D_1) \subset \rows(C)}}C^+ \Big)
\Big(\prod_{\substack{D \in \Max(S) \\  D \cap N_{j_0} \neq \emptyset \\ \rows(D) \subsetneq \rows(D_1)}} D^+ \Big)
\Big(\prod_{\substack{ E \in \Max(S) \\ N_{j_0} \cap D_1 \cap E = \emptyset}} E^+\Big)
\Big(\sum_{i=1}^a u_{i+} \Big),
\end{align*}
as needed.
\end{proof}

Finally, the following proposition gives a way to write $D_{\beta}^+$ as a sum over its intersections with the elements of $P(j_0)$ that it covers, along with the rows of $D_{\beta}$ that are not rows of any clique that it covers.

\begin{prop}\label{prop:SumOfUnfactoredTerms}
Let $D_{\alpha_1}, \dots, D_{\alpha_{\ell}} \lessdot D_{\beta}$. Let $r_1, \dots, r_a$ be the rows of $D_{\beta}$ that are not in any $D_{\alpha_i}$ for $i=1, \dots, \ell$. Then
\begin{equation}\label{eqn:SumOfUnfactoredTerms}
D_{\beta}^+ = \sum_{i=1}^a u_{r_i +} + \sum_{i=1}^{\ell} (D_{\alpha_i} \cap D_{\beta})^+.
\end{equation}
\end{prop}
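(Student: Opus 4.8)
The plan is to expand $D_\beta^+$ directly as a sum of entries of $u$ and to match the terms against the two groups on the right of (\ref{eqn:SumOfUnfactoredTerms}). Since $D_{\alpha_1},\dots,D_{\alpha_\ell}\lessdot D_\beta$ in $P(j_0)$, the clique $D_\beta$ is one of the induced cliques $D_0,\dots,D_h$; by Proposition \ref{prop:MaxCliqueCharacterization} it is induced by a block, so all of its nonzero rows lie in $[r]$, it factors as $D_\beta=\rows(D_\beta)\times\cols(D_\beta)$, and $j_0\in\cols(D_\beta)$. The first step is to check that $\{r_1,\dots,r_a\}$ together with $\rows(D_{\alpha_1}),\dots,\rows(D_{\alpha_\ell})$ is a partition of $\rows(D_\beta)$: each $\rows(D_{\alpha_k})\subseteq\rows(D_\beta)$ because $D_{\alpha_k}\lessdot D_\beta$, and two of them cannot meet, since the DS-free condition (Proposition \ref{prop:DSFree}) would then nest them, and a proper nesting $\rows(D_{\alpha_k})\subsetneq\rows(D_{\alpha_{k'}})$ would force $D_{\alpha_k}<D_{\alpha_{k'}}<D_\beta$ (distinct blocks have distinct row sets because $T_0\mid\dots\mid T_h$ is the coarsest such partition), contradicting $D_{\alpha_k}\lessdot D_\beta$. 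Hence $D_\beta^+=\sum_{i\in\rows(D_\beta)}\sum_{j\in\cols(D_\beta)}u_{ij}$ breaks into the sum over the rows $r_i$ and the sums over each $\rows(D_{\alpha_k})$.

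For the contribution of a fixed $D_{\alpha_k}$, I would invoke the observation recorded just before Proposition \ref{prop:MaxIntCharacterization}: since $D_{\alpha_k}\lessdot D_\beta$ we have $\cols(D_\beta)\subseteq\cols(D_{\alpha_k})$ and $D_{\alpha_k}\cap D_\beta=\rows(D_{\alpha_k})\times\cols(D_\beta)$. Therefore $\sum_{i\in\rows(D_{\alpha_k})}\sum_{j\in\cols(D_\beta)}u_{ij}=(D_{\alpha_k}\cap D_\beta)^+$, which is exactly the $k$th term of the last sum in (\ref{eqn:SumOfUnfactoredTerms}).

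The crux, which I expect to be the main obstacle, is to show that for each ``new'' row $r_i$ the partial row sum over $\cols(D_\beta)$ equals the full marginal $u_{r_i+}$, i.e.\ that $\{\,j:(r_i,j)\in S\,\}=\cols(D_\beta)$. The inclusion $\cols(D_\beta)\subseteq\{\,j:(r_i,j)\in S\,\}$ is immediate because $D_\beta$ is a clique containing row $r_i$. For the reverse, take $(r_i,j)\in S$ and let $D_\gamma$ be the clique induced by the block containing column $j$, so that $\rows(j)=\rows(D_\gamma)$ and $r_i\in\rows(D_\gamma)\cap\rows(D_\beta)\neq\emptyset$; note $D_\gamma\in P(j_0)$ since $\rows(D_\gamma)\subseteq[r]=\rows(j_0)$ gives $j_0\in\cols(D_\gamma)$. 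By the DS-free condition $\rows(D_\gamma)$ and $\rows(D_\beta)$ are comparable. If $\rows(D_\gamma)\subsetneq\rows(D_\beta)$ then $D_\gamma<D_\beta$ in $P(j_0)$, so a saturated chain from $D_\gamma$ up to $D_\beta$ passes through one of the covered cliques $D_{\alpha_k}$, whence $r_i\in\rows(D_\gamma)\subseteq\rows(D_{\alpha_k})$, contradicting the choice of $r_i$. Hence $\rows(D_\beta)\subseteq\rows(D_\gamma)=\rows(j)$, i.e.\ $j\in\cols(D_\beta)$. Putting the three contributions together gives
\[
D_\beta^+=\sum_{i\in\rows(D_\beta)}\sum_{j\in\cols(D_\beta)}u_{ij}=\sum_{i=1}^a u_{r_i+}+\sum_{k=1}^\ell (D_{\alpha_k}\cap D_\beta)^+,
\]
which is (\ref{eqn:SumOfUnfactoredTerms}). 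The only points needing care are that an induced clique is determined by its row set (so nodes of $P(j_0)$ may be identified with their row sets, which is used repeatedly above) and the edge cases $a=0$ or $\ell=0$, both handled by reading the corresponding empty sums as zero.
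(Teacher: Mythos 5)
Your proposal is correct and takes essentially the same approach as the paper: both decompose $D_{\beta}^+$ over the rows of the covered cliques together with the ``new'' rows, use that $D_{\alpha_k}\cap D_{\beta}=\rows(D_{\alpha_k})\times\cols(D_{\beta})$ to identify the covered contributions, and use the DS-free condition (via the cover relations in $P(j_0)$) to show that each new row's full support lies in $\cols(D_{\beta})$, so its partial sum is the full marginal $u_{r_i+}$.
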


\begin{proof}
Without loss of generality, we will let $D_2, \dots, D_{\ell} \lessdot D_{1}$, 
and let rows $1, \dots, a$ be the rows of $D_1$ that are not rows of any $D_\alpha$ for $\alpha = 2, \dots,\ell$. 
First note that each $u_{ij}$ that appears on the right-hand side of Equation (\ref{eqn:SumOfUnfactoredTerms})
is a term of $D_1^+$.
Indeed, if $(i,j) \in D_{\alpha} \cap D_1$ for some $\alpha = 2,\dots,\ell$, this is clear.

Otherwise, we have $i \in [a]$.
For the sake of contradiction, suppose that there exists a column $j$ so that $(i,j) \not\in D_1$ but $(i,j) \in S$.
But then $\rows(D_1) \cap \rows(j)$ is non-empty.
So by the DS-free condition, either $\rows(D_1) \subset \rows(j)$ or $\rows(j) \subsetneq \rows(D_1)$.
If $\rows(D_1) \subset \rows(j)$, then $j$ is a column of $D_1$ by definition, which is a contradiction.
If $\rows(j) \subsetneq \rows(D_1)$, then column $j$ belongs to some block $B_{\eta}$
with $\rows(D_{\eta}) \subsetneq \rows(D_1)$.
But this contradicts that row $i$ is not in any $D_{\alpha}$ for $\alpha = 2,\dots,\ell$.

Now it remains to show that all the terms in $D_1^+$ appear in the right-hand side of Equation (\ref{eqn:SumOfUnfactoredTerms}).
Let $(i,j) \in D_1$.
If $i \in [a]$, then $u_{ij}$ is a term in the right-hand side, as needed.
Otherwise, $i \in \rows(D_{\alpha})$ for some $\alpha \in \{2,\dots,\ell\}$.
Since $\cols(D_1) \subset \cols(D_{\alpha})$ by definition,
we must have $j \in \cols(D_{\alpha})$. So $(i,j) \in D_{\alpha}$.
Therefore, $u_{ij}$ is a term in $(D_{\alpha} \cap D_1)^+$.
Finally, since $D_{\gamma} \cap D_{\delta} = \emptyset$ for all
 $\gamma, \delta \in \{2,\dots,l\}$ with $\gamma \neq \delta$,
no term is repeated.
\end{proof}


We can now use these propositions to prove Lemma \ref{lem:SumInBlock}.

\begin{proof}[Proof of Lemma \ref{lem:SumInBlock}]
We will induct over the poset $P(j_0)$. 
For the base case, we let $D_{\alpha}$ be minimal in $P(j_0)$. 
First, by Proposition \ref{prop:SumOfUnfactoredTerms} we have that
\[
D_{\alpha}^+ = \sum_{i \in \rows(D_{\alpha})} u_{i+}
\]
since $D_{\alpha}$ does not cover any element of $P(j_0)$.


Let $C \in \Int(S)$ with $\rows(D_{\alpha}) \subset \rows(C)$
such that $C \cap N_{j_0} \neq \emptyset$.
Then for any $i \in \rows(D_{\alpha})$, $(i,j_0) \in C$.
So $C \in \Int(ij_0)$ and $C^+$ is a factor of $x_{ij_0}$.

If $C \in \Int(ij_0)$, then $N_{j_0} \cap C \neq \emptyset$.
It remains to be shown that all factors of $x_{ij_0}$, $C^+$ corresponding to maximal intersections of maximal cliques
have $\rows(D_{\alpha}) \subset \rows(C)$. 

Let $(i,j_0) \in D_{\alpha}$. 
Let $D_{\beta}$ and $D_{\gamma}$ be maximal cliques such that $C = D_{\beta} \cap D_{\gamma} \in \Int(ij_0)$.
Then we have $\rows(D_{\alpha}) \cap \rows(D_{\beta})$ and $\rows(D_{\alpha}) \cap \rows(D_{\gamma})$ nonempty.
Since $D_{\alpha}$ is minimal, this implies that $\rows(D_{\alpha}) \subset \rows(D_{\beta}), \rows(D_{\gamma})$.
So $\rows(D_{\alpha}) \subset \rows(C)$.
Therefore the factors of each $x_{ij_0}$ with $(i,j_0) \in D_{\alpha}$
that correspond to maximal intersections of maximal cliques are
\[
\prod_{\substack{C \in \Int(S) \\ C \cap N_{j_0} \neq \emptyset \\ \rows(D_{\alpha}) \subset \rows(C)}} C^+,
\]
as needed. The other factors of each $x_{ij_0}$ for $(i,j_0) \in D_{\alpha}$ are of the form
\[
\prod_{\substack{E \in \Max(S) \\ (i,j_0) \not\in E}} E^+.
\]
Since all $(i,j_0) \in D_{\alpha}$ are contained in the same maximal cliques when $D_{\alpha}$ is minimal in $P(j_0)$, 
the terms corresponding to maximal cliques in each $x_{ij_0}$ are of the form
\[
\prod_{\substack{E \in \Max(S) \\ N_{j_0} \cap D_{\alpha} \cap E= \emptyset}} E^+,
\]
as needed.
So we have that
\begin{align*}
\sum_{(i,j_0) \in  D_{\alpha}} x_{ij_0} &= \sum_{(i,j_0) \in D_{\alpha}} u_{i+} u_{+j_0}
\Big(\prod_{\substack{C \in \Int(S) \\ C \cap N_{j_0} \neq \emptyset \\ \rows(D_{\alpha}) \subset \rows(C)}} C^+ \Big)
\Big( \prod_{\substack{E \in \Max(S) \\ N_{j_0} \cap D_{\alpha} \cap E= \emptyset}} E^+ \Big) \\
&= u_{+j_0} \Big( \sum_{i \in \rows(D_{\alpha})} u_{i+} \Big)
\Big(\prod_{\substack{C \in \Int(S)  \\ C \cap N_{j_0} \neq \emptyset \\ \rows(D_{\alpha}) \subset \rows(C)}} C^+ \Big)
\Big( \prod_{\substack{E \in \Max(S) \\ N_{j_0} \cap D_{\alpha} \cap E= \emptyset}} E^+ \Big) \\
&= u_{+j_0} D_{\alpha}^+ \Big(\prod_{\substack{C \in \Int(S)  \\ C \cap N_{j_0} \neq \emptyset\\ \rows(D_{\alpha}) \subset \rows(C)}} C^+ \Big)
\Big( \prod_{\substack{E \in \Max(S) \\ N_{j_0} \cap D_{\alpha} \cap E= \emptyset}} E^+ \Big).
\end{align*}

Since $D_{\alpha}$ is the only maximal clique whose rows are contained in $D_{\alpha}$, we have  that
\[
D_{\alpha}^+ = \prod_{\substack{D \in \Max(S) \\ D \cap N_{j_0} \neq \emptyset \\ \rows(D) \subset \rows(D_{\alpha})}} D^+. 
\]
So the lemma holds for the base case.

Without loss of generality, let $D_2, \dots, D_{\ell} \lessdot D_1$ in $P(j_0)$. Let rows $1, \dots, a$ be the rows of $D_1$ that are not in any $D_{\alpha}$ with $\alpha = 2, \dots,\ell$. We have the following chain of equalities.

\begin{align*} \allowdisplaybreaks[4]
\sum_{(i,j_0) \in D_1} x_{ij_0} &= u_{+j_0} \sum_{i=1}^a  x_{ij_0} + \sum_{\alpha=2}^{\ell}  \sum_{(i,j_0) \in D_{\alpha}} x_{ij_0} \\
&=  u_{+j_0} \Big( \prod_{\substack{C \in \Int(S)  \\ C \cap N_{j_0} \neq \emptyset \\ \rows(D_{\beta}) \subset \rows(C)}}C^+ \Big)
\Big(\prod_{\substack{D \in \Max(S) \\  D \cap N_{j_0} \neq \emptyset \\ \rows(D) \subsetneq \rows(D_{\beta})}} D^+ \Big)
\Big(\prod_{\substack{ E \in \Max(S) \\ N_{j_0} \cap D_{\beta} \cap E = \emptyset}} E^+\Big)
\Big(\sum_{i=1}^p u_{i+} \Big) \\
& \qquad \qquad + \sum_{\alpha=2}^{\ell}  \sum_{(i,j_0) \in D_{\alpha}} x_{ij_0}\\
&= u_{+j_0}\Big( \prod_{\substack{C \in \Int(S)  \\ C \cap N_{j_0} \neq \emptyset \\ \rows(D_{\beta}) \subset \rows(C)}}C^+ \Big)
\Big(\prod_{\substack{D \in \Max(S) \\ D \cap N_{j_0} \neq \emptyset \\ \rows(D) \subsetneq \rows(D_{\beta})}} D^+ \Big)
\Big(\prod_{\substack{ E \in \Max(S) \\ N_{j_0} \cap D_{\beta} \cap E = \emptyset}} E^+\Big)
\Big(\sum_{i=1}^a u_{i+} \Big) \\
& \qquad \qquad + \sum_{\alpha =2}^{\ell} u_{+j_0} 
\Big( \prod_{\substack{C \in \Int(S)  \\ C \cap N_{j_0} \neq \emptyset \\ \rows(D_{\alpha}) \subset \rows(C)}} C^+ \Big)
\Big( \prod_{\substack{D \in \Max(S) \\ D \cap N_{j_0} \neq \emptyset \\ \rows(D) \subset \rows(D_{\alpha})}} D^+ \Big)
\Big( \prod_{\substack{E \in \Max(S) \\ N_{j_0} \cap D_{\alpha} \cap E = \emptyset}} E^+ \Big) \\
&= u_{+j_0} \Big(\prod_{\substack{D \in \Max(S) \\ D \cap N_{j_0} \neq \emptyset \\ \rows(D) \subsetneq \rows(D_1)}} D^+ \Big)
\Big( \prod_{\substack{E \in \Max(S) \\ N_{j_0} \cap D_1 \cap E = \emptyset}} E^+ \Big) \\
& \qquad \qquad \times
\Big(\Big( \sum_{i=1}^a u_{i+} \times \prod_{\substack{C \in \Int(S)  \\ C \cap N_{j_0} \neq \emptyset \\ \rows(D_1) \subset \rows(C)}} C^+ \Big)
+ \Big(\sum_{\alpha =2}^{\ell} \prod_{\substack{C \in \Int(S) \\ \rows(D_{\alpha}) \subset \rows(C)}} C^+ \Big) \Big) \\
&= u_{+j_0} \Big(\prod_{\substack{D \in \Max(S) \\ D \cap N_{j_0} \neq \emptyset \\ \rows(D) \subsetneq \rows(D_1)}} D^+ \Big)
\Big( \prod_{\substack{E \in \Max(S) \\ N_{j_0} \cap D_1 \cap E = \emptyset}} E^+ \Big) \\ 
& \qquad \qquad \times
\Big( \prod_{\substack{C \in \Int(S)  \\ C \cap N_{j_0} \neq \emptyset \\ \rows(D_1) \subset \rows(C)}} C^+ \Big)
\Big( \sum_{i=1}^a u_{i+} + \sum_{\alpha =2}^{\ell} (D_{\alpha} \cap D_1)^+ \Big)  \\
&=u_{+j_0} \Big(\prod_{\substack{D \in \Max(S \\ D \cap N_{j_0} \neq \emptyset) \\ \rows(D) \subsetneq \rows(D_1)}} D^+ \Big)
\Big( \prod_{\substack{E \in \Max(S) \\ N_{j_0} \cap D_1 \cap E = \emptyset}} E^+ \Big)
\Big( \prod_{\substack{C \in \Int(S)  \\ C \cap N_{j_0} \neq \emptyset \\ \rows(D_1) \subset \rows(C)}} C^+ \Big)
\Big( D_1^+ \Big) \\ 
&= u_{+j_0} \Big(\prod_{\substack{D \in \Max(S) \\ D \cap N_{j_0} \neq \emptyset \\ \rows(D) \subset \rows(D_1)}} D^+ \Big)
\Big( \prod_{\substack{E \in \Max(S) \\ N_{j_0} \cap D_1 \cap E = \emptyset}} E^+ \Big)
\Big( \prod_{\substack{C \in \Int(S)  \\ C \cap N_{j_0} \neq \emptyset \\ \rows(D_1) \subset \rows(C)}} C^+ \Big)
\end{align*}

The second equality follows from Proposition \ref{prop:SumExtraEntries}. The third equality is an application of the inductive hypothesis. The fourth equality follows from Proposition \ref{prop:MaxCliquesFactorOut} along with Remark \ref{rmk:MaxCliquesFactorOut}. The fifth equality follows from Proposition \ref{prop:IntersectionsFactorOut}. The sixth equality follows from Proposition \ref{prop:SumOfUnfactoredTerms}. The seventh inequality follows from the fact that $D_1$ is the only clique whose rows are equal to $\rows(D_1)$. This completes our proof by induction.
\end{proof}

\section{Checking the Conditions of Birch's Theorem}\label{sec:BirchsThm}

In the previous section, we wrote a formula for the sum of $x_{ij_0}$ where
$i$ ranges over the rows of some maximal clique $D_{\alpha}$.
Since the block $B_0$ induces its own maximal clique,
Lemma \ref{lem:SumInBlock} allows us to write the sum of the $x_{ij_0}$s for $1 \leq i \leq r$ in the following concise way.
This in turn verifies that the proposed maximum likelihood estimate $\hat{p}$
has the same sufficient statistics as the normalized data $u / u_{++}$,
which is one of the conditions of Birch's theorem.

\begin{cor}\label{cor:SumEntireBlock}
Let $S$ be DS-free. Then for any column $j_0$,
\[
\sum_{i=1}^r x_{ij_0} = u_{+j_0} \prod_{D \in \Max(S)} D^+.
\]
\end{cor}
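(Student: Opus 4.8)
The plan is to apply Lemma~\ref{lem:SumInBlock} to the maximal clique $D_0$ induced by the block $B_0$, which by our normalization contains the column $j_0$, and then to simplify the three products on the right-hand side of~(\ref{eqn:SumInBlock}) using the special position of $D_0$ in the poset $P(j_0)$.

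First I would observe that $\rows(D_0) = [r]$. Since $j_0 \in T_0$ and $(1,j_0), \dots, (r,j_0) \in S$, we have $\rows(B_0) = [r]$; and since $[r] = \rows(j_0) \supseteq \rows(B_0)$ trivially, the column $j_0$ lies in $D_0$, so $\rows(D_0) = \rows(B_0) = [r]$. Consequently $D_0$ is a maximal clique (Proposition~\ref{prop:MaxCliqueCharacterization}), it contains $N_{j_0} = [r] \times \{j_0\}$, and it is the unique maximal element of $P(j_0)$. Moreover $\sum_{i \in \rows(D_0)} x_{ij_0} = \sum_{i=1}^r x_{ij_0}$ is exactly the left-hand side of the corollary, so it remains to evaluate the right-hand side of~(\ref{eqn:SumInBlock}) with $D_\alpha = D_0$.

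Next I would simplify the three products in turn. Any clique meeting $N_{j_0}$ has all of its rows in $[r]$ (if $(i,j_0),(i',j') \in C$ then $(i',j_0) \in C \subseteq S$, forcing $i' \le r$), so for $C \in \Int(S)$ meeting $N_{j_0}$ the requirement $\rows(D_0) = [r] \subseteq \rows(C)$ would force $\rows(C) = [r]$; but by Proposition~\ref{prop:MaxIntCharacterization} every element of $\Int(S)$ meeting $N_{j_0}$ has the form $D_\gamma \cap D_\delta$ with, say, $D_\gamma \lessdot D_\delta$, hence $\rows(C) = \rows(D_\gamma) \subsetneq \rows(D_\delta) \subseteq [r]$, which is never all of $[r]$. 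So the $\Int(S)$-product is empty and equals $1$. For the first $\Max(S)$-product, every maximal clique $D$ meeting $N_{j_0}$ already satisfies $\rows(D) \subseteq [r] = \rows(D_0)$, so it equals $\prod_{D \in \Max(S),\ D \cap N_{j_0} \neq \emptyset} D^+$. For the second $\Max(S)$-product, since $N_{j_0} \subseteq D_0$ the condition $N_{j_0} \cap D_0 \cap E = \emptyset$ is equivalent to $N_{j_0} \cap E = \emptyset$, so it equals $\prod_{E \in \Max(S),\ E \cap N_{j_0} = \emptyset} E^+$. Multiplying the last two products gives $\prod_{D \in \Max(S)} D^+$, and together with the factor $u_{+j_0}$ this yields the claimed identity.

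The computation is essentially bookkeeping once Lemma~\ref{lem:SumInBlock} is available; the only step that needs genuine care is ruling out an element of $\Int(S)$ whose row set is all of $[r]$, for which I rely on the description of $\Int(S)$ in Proposition~\ref{prop:MaxIntCharacterization} together with the fact that $D_0$ sits at the top of $P(j_0)$.
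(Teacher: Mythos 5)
Your proposal is correct and follows essentially the same route as the paper: apply Lemma~\ref{lem:SumInBlock} to the unique maximal element $D_0$ of $P(j_0)$ (whose rows are all of $[r]$), note the $\Int(S)$-product is empty because no maximal intersection meeting $N_{j_0}$ can have row set containing $\rows(D_0)$, and observe that the two $\Max(S)$-products together run over all maximal cliques since $N_{j_0}\subseteq D_0$. Your justification that the $\Int(S)$-product is empty is slightly more spelled out than the paper's (which cites Proposition~\ref{prop:MaxIntCharacterization} and the maximality of $D_0$ in $P(j_0)$ directly), but it is the same argument.
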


\begin{proof}
The poset $P(j_0)$ has a unique maximal element $D_0$ with $\rows(D_0) = \rows(j_0)$.
Note that $D_0$ may include more columns than $j_0$ 
since it may have columns whose nonzero rows are the same as or contain those of $j_0$.

By Proposition \ref{prop:MaxIntCharacterization}, there are no maximal intersections of maximal cliques $C$
with $\rows(D_0) \subset \rows(C)$, since $D_0$ is maximal in $P(j_0)$.
It follows from  Proposition \ref{prop:MaxCliqueCharacterization}
that a maximal clique $D$  intersects $N_{j_0}$ if and only if it has $\rows(D) \subset \rows(D_0)$.

Since $N_{j_0} \subset D_0$, we have that $N_{j_0} \cap D_0 \cap E = N_{j_0} \cap E$ for any clique $E$.
By Lemma \ref{lem:SumInBlock}, we have
\begin{align*}
\sum_{i=1}^r x_{ij_0} &= u_{+j_0}  \Big( \prod_{\substack{D \in \Max(S) \\ D \cap N_{j_0} \neq \emptyset \\ \rows(D) \subset \rows(D_0)}}D^+ \Big) 
\Big( \prod_{\substack{E \in \Max(S) \\ N_{j_0} \cap D_0 \cap E = \emptyset}} E^+ \Big)\\ 
&=  u_{+j_0} \Big( \prod_{\substack{D \in \Max(S) \\ D \cap N_{j_0} \neq \emptyset}} D^+ \Big)
 \Big(\prod_{\substack{E \in \Max(S) \\ E \cap N_{j_0} = \emptyset}} E^+ \Big) \\
 &=  u_{+j_0} \prod_{D \in \Max(S)} D^+,
\end{align*}
as needed.
\end{proof}

Now we will address the condition of Birch's theorem which states that the maximum likelihood estimate must satisfy the equations defining $\Mcal_S$.

\begin{lemma}\label{lem:MLEinModel}
Let $S$ be doubly chordal bipartite. Let $u \in \R^{S}$ be a generic matrix of counts. Then the point $(\hat{p}_{ij} \mid (i,j) \in S)$ specified in Theorem \ref{thm:Main} is in the Zariski closure of $\Mcal_S$.
\end{lemma}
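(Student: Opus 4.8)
The plan is to show that $\hat p = (\hat p_{ij} \mid (i,j) \in S)$ satisfies every binomial generator of the toric ideal $I_{A(S)}$. Since $\Mcal_S = \Mcal_{A(S)}$ is a toric model, its vanishing ideal is generated by binomials $p^{w^+} - p^{w^-}$ coming from integer vectors $w$ in the kernel of $A(S)$; equivalently, by the standard description of the quasi-independence ideal, it suffices to check that $\hat p$ satisfies the ``circuit'' relations coming from cycles in $G_S$ — for any closed walk alternating between rows and columns, the product of the $\hat p_{ij}$ along the ``even'' edges equals the product along the ``odd'' edges. By a reduction to chordless cycles (using that $G_S$ is chordal bipartite, so it suffices to verify the $4$-cycle relations $\hat p_{ij}\hat p_{k\ell} = \hat p_{i\ell}\hat p_{kj}$ whenever $(i,j),(i,\ell),(k,j),(k,\ell) \in S$), the problem collapses to a purely local statement about the clique formula.

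So the heart of the argument is: \emph{fix rows $i,k$ and columns $j,\ell$ forming a $4$-cycle in $G_S$; show}
\[
\frac{u_{i+}u_{+j}\prod_{C \in \Int(ij)}C^+}{\prod_{D \in \Max(ij)}D^+}\cdot\frac{u_{k+}u_{+\ell}\prod_{C \in \Int(k\ell)}C^+}{\prod_{D \in \Max(k\ell)}D^+} = \frac{u_{i+}u_{+\ell}\prod_{C \in \Int(i\ell)}C^+}{\prod_{D \in \Max(i\ell)}D^+}\cdot\frac{u_{k+}u_{+j}\prod_{C \in \Int(kj)}C^+}{\prod_{D \in \Max(kj)}D^+}.
\]
The $u_{++}$ factors and the row/column marginals cancel identically, so this reduces to the multiset identity
\[
\Max(ij) \uplus \Max(k\ell) \uplus \Int(i\ell) \uplus \Int(kj) \;=\; \Max(i\ell) \uplus \Max(kj) \uplus \Int(ij) \uplus \Int(k\ell)
\]
as multisets of subsets of $S$ (after which one evaluates $C \mapsto C^+$). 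I would prove this by analyzing the structure of maximal cliques containing a given entry: using Proposition \ref{prop:DSFree} and the tree/poset structure from Section \ref{sec:FixedColumn} (which is available for any DS-free $S$, and doubly chordal bipartite implies DS-free), the maximal cliques through the four corners of the $4$-cycle, and their pairwise intersections, are governed by the chain structure of the $\rows(\cdot)$ sets. The key combinatorial input is that $\{i,k\} \times \{j,\ell\}$ being a clique forces each maximal clique containing one corner either to contain a neighbouring corner too or to be ``separated'' in a controlled way; a careful case split on which corners a maximal clique (resp. a maximal intersection) meets should pair up the terms on the two sides exactly.

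The main obstacle I anticipate is precisely the bookkeeping in that multiset identity: one must show that a maximal clique $D$ appears with the same total multiplicity on both sides, where the multiplicity of $D$ counts how many of the relevant corners $D$ contains among $\{ij,k\ell\}$ versus $\{i\ell,kj\}$, and likewise for elements of $\Int(S)$ — and for the $\Int$ terms one additionally has to track the ``containment-maximal pairwise intersection'' condition, which interacts subtly with which corners lie in the intersection. I expect the clean way through is to pass to the poset $P$ on maximal cliques meeting the $2\times 2$ block and observe that an intersection $D_\alpha \cap D_\beta$ is relevant to corner $(x,y)$ exactly when $(x,y) \in D_\alpha \cap D_\beta$, then use Proposition \ref{prop:MaxIntCharacterization} to replace ``maximal intersection'' by ``cover relation in $P$'', reducing everything to counting cover relations along chains — at which point the telescoping becomes transparent. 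Once the identity is established, evaluating both sides at $C\mapsto C^+$ and restoring the cancelled marginal factors gives $\hat p^{w^+} = \hat p^{w^-}$ for every $4$-cycle $w$, hence $\hat p \in V(I_{A(S)}) = \overline{\Mcal_S}$ for generic $u$ (genericity guaranteeing all the $C^+$ are nonzero so that no spurious cancellation occurs).
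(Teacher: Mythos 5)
Your proposal is essentially the paper's own argument: reduce to the fully observed $2\times 2$ minors (the paper's Proposition \ref{prop:DefiningEquations}, which gets the quadratic generation from the Markov basis result for chordal bipartite $S$ rather than your chord-splitting reduction, but to the same effect) and then verify each relation $\hat p_{i_1j_1}\hat p_{i_2j_2}=\hat p_{i_1j_2}\hat p_{i_2j_1}$ by matching the $\Max$ and $\Int$ factors. The bookkeeping you anticipate as the main obstacle is handled in the paper (Proposition \ref{prop:ModelEquationsSatisfied}) more simply than your chain-counting plan suggests: one proves the two separate identities $\Max(i_1j_1)\cup\Max(i_2j_2)=\Max(i_1j_2)\cup\Max(i_2j_1)$ and $\Int(i_1j_1)\cup\Int(i_2j_2)=\Int(i_1j_2)\cup\Int(i_2j_1)$ directly, via a one-step DS-free argument (a maximal clique, or cover-relation intersection from Proposition \ref{prop:MaxIntCharacterization}, containing one corner but not its neighbor in the same column must contain the diagonally opposite corner), with squared factors matching automatically because any clique containing two opposite corners contains all four.
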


In order to prove this lemma, we must first describe the vanishing ideal of $\Mcal_S$. We denote this ideal $\Ical(\Mcal_S)$. It is a subset of the polynomial ring in $\#S$ variables, 
\[R = \mathbb{C}[p_{ij} \mid ij \in S].\]

\begin{prop}\label{prop:DefiningEquations}
Let $S$ be chordal bipartite. Then $\Ical(\Mcal_S)$ is generated by the $2 \times 2$ minors of the matrix form of $S$ that contain no zeros. That is, $\Ical(\Mcal_S)$ is generated by all binomials of the form
\[
p_{ij} p_{k\ell} - p_{i \ell} p_{kj},
\]
such that $(i,j), (k,\ell), (i,\ell), (k,j) \in S$.
\end{prop}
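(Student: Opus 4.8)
\textbf{Proof proposal for Proposition \ref{prop:DefiningEquations}.}

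The plan is to show the two ideals coincide by a double inclusion, with the only real work being the reverse inclusion. First I would fix notation: let $J \subseteq R$ be the ideal generated by the zero-free $2\times 2$ minors $p_{ij}p_{k\ell} - p_{i\ell}p_{kj}$ with $ij, k\ell, i\ell, kj \in S$. The inclusion $J \subseteq \Ical(\Mcal_S)$ is immediate: under the monomial parametrization $p_{ij} = s_i t_j$, each such binomial evaluates to $s_it_j s_kt_\ell - s_it_\ell s_kt_j = 0$, so every generator of $J$ vanishes on $\Mcal_S$ and hence on its Zariski closure. The substance is the reverse inclusion $\Ical(\Mcal_S) \subseteq J$. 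Since $\Ical(\Mcal_S) = I_{A(S)}$ is the toric ideal of the monomial map $\phi^S$, by \cite[Proposition~6.2.4]{sullivant2018} (the proposition quoted in Section \ref{sec:LogLinModels}) it is spanned by binomials $p^u - p^v$ with $A(S)u = A(S)v$, i.e. $u, v \in \N^S$ have the same row sums and the same column sums. So it suffices to show that every such binomial lies in $J$.

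I would prove this by induction on the total degree $|u| = |v|$, reducing any such binomial modulo $J$ to a monomial difference where $u$ and $v$ overlap in a common variable, which can then be cancelled to drop the degree. The heart of the argument is the following claim: if $u, v \in \N^S$ are distinct with $A(S)u = A(S)v$, then there is a zero-free $2\times 2$ minor $p_{ij}p_{k\ell} - p_{i\ell}p_{kj}$ of $S$ such that $p_{ij}p_{k\ell}$ divides $p^u$ and $p_{i\ell}, p_{kj}$ are ``available'' in the sense that replacing the factor $p_{ij}p_{k\ell}$ in $p^u$ by $p_{i\ell}p_{kj}$ moves $p^u$ strictly closer (in some well-founded measure) to $p^v$. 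Granting the claim, one writes $p^u - p^v = (p^u - p^{u'}) + (p^{u'} - p^v)$ where $p^u - p^{u'} \in J$ (it is $p^{u-e_{ij}-e_{k\ell}}$ times a generator of $J$) and $p^{u'} - p^v$ has strictly smaller distance to $p^v$; iterating terminates, and the standard toric-ideal argument (e.g. the proof that a finite generating set of a toric ideal can be extracted from the lattice relations) finishes. To prove the claim itself is where chordal bipartiteness enters. Pick any variable $p_{ij}$ with $u_{ij} > v_{ij}$; since column $j$ has equal $u$- and $v$-marginals, there is a row $k$ with $u_{kj} < v_{kj}$, so $p_{kj}$ divides $p^v$ but we need $k\ell$ with $u_{k\ell} > v_{k\ell}$ for some $\ell$ (exists by the row-$k$ marginal equality), and then $p_{i\ell}$; the obstruction is that $(i,\ell)$ need not lie in $S$ — the submatrix on rows $\{i,k\}$ and columns $\{j,\ell\}$ might have a structural zero at $(i,\ell)$. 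This is exactly where I would invoke chordal bipartiteness of $G_S$: build a cycle in $G_S$ through the edges $ij, kj, k\ell$ together with alternating edges recording the mismatch between $u$ and $v$, and use the existence of a chord in any cycle of length $\ge 6$ to find a \emph{shorter} mismatch and hence, by an inner induction on the length of this ``exchange cycle,'' a zero-free $2\times 2$ minor with all four corners in $S$ that still makes progress. This chord-finding reduction is the main obstacle and the place where the hypothesis is genuinely used; it is essentially the known fact that the toric ideal of a chordal bipartite graph (equivalently, the ideal of $2\times 2$ minors of a ``sparse'' matrix with chordal bipartite support) is generated in degree $2$, and I would cite or adapt the corresponding result from the literature on determinantal ideals of sparse matrices (see e.g. the theory surrounding \cite{duarte2019}) rather than reprove it from scratch.

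An alternative, cleaner route I would consider: realize $\Mcal_S$ as a facial submodel situation or use a Gröbner-basis / regular-triangulation argument. Order the variables $p_{ij}$ and show that, with respect to a suitable term order, the zero-free $2\times 2$ minors form a Gröbner basis of $\Ical(\Mcal_S)$ when $G_S$ is chordal bipartite — the leading terms would be the ``diagonal'' monomials $p_{ij}p_{k\ell}$ with $i<k$, $j<\ell$, and chordal bipartiteness guarantees that every S-pair reduces to zero because any obstruction (a path of length $\ge 2$ forcing a longer relation) can be shortcut by a chord. This is the approach most likely to be short to write if the paper is willing to cite the standard criterion that chordal bipartite graphs have quadratic toric ideals; if a self-contained proof is wanted, the degree induction sketched above is the fallback, with the chord-exchange step flagged as the crux.
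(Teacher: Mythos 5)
Your proposal is correct in outline, but it reaches the result by a genuinely different route than the paper. The paper's proof is essentially a citation: it identifies $\Ical(\Mcal_S)$ with the toric ideal of the bipartite graph $G_S$, invokes Theorem 10.1 of the Markov basis literature \cite{aoki2012} (the ``df 1 loops,'' i.e.\ chordless cycles of $G_S$, form a Markov basis for $\Mcal_S$) together with the Fundamental Theorem of Markov Bases \cite{diaconis1998}, and then needs only the one-line observation that chordal bipartiteness forces every chordless cycle to have length $4$, so the corresponding moves are exactly the zero-free $2\times 2$ minors. You instead reduce to marginal-preserving binomials $p^u-p^v$ and propose a degree induction via exchange moves (or, alternatively, a quadratic Gr\"obner basis), which amounts to re-deriving, or citing, the Ohsugi--Hibi/Villarreal-type fact that the toric ideal of a bipartite graph is generated by chordless-cycle binomials and is quadratically generated precisely when the graph is chordal bipartite. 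Both proofs ultimately rest on the same combinatorial core and both outsource it to known results, so your plan is acceptable; the trade-off is that the paper's version is three lines, while yours, if written self-contained, would carry real work: your key ``claim'' as stated (a single zero-free $2\times 2$ move applicable to $p^u$ always strictly decreases the distance to $p^v$) is not immediate and needs the inner induction you gesture at, namely first reducing arbitrary fiber differences to alternating exchange cycles and then using a chord to split a long cycle binomial into shorter ones modulo $J$ --- that reduction \emph{is} the content of the cited theorems. Also, your literature pointer is off: \cite{duarte2019} concerns Horn uniformization and ML degree one, not determinantal or edge-ring ideals; the appropriate references are the Markov basis sources the paper uses \cite{aoki2012,diaconis1998} or the Ohsugi--Hibi results on toric ideals of (chordal) bipartite graphs.
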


\begin{proof}
This follows from results in \cite[Chapter~10.1]{aoki2012}. The \emph{loops} on $S$ correspond to cycles in $G_S$. The  \emph{df 1 loops} as defined in \cite[Chapter~10.1]{aoki2012} are those whose support does not properly contain the support of any other loop; that is, they correspond to cycles in $G_S$ with no chords. Since $G_S$ is chordal bipartite, each of these cycles contain exactly four edges. Therefore the df 1 loops on $S$ all have degree two, and each corresponds to a $2 \times 2$ minor of $S$ by definition. Theorem 10.1 of \cite{aoki2012} states that the df 1 loops form a Markov basis for $\Mcal_S$. Therefore, by the Fundamental Theorem of Markov Bases \cite[Theorem~3.1]{diaconis1998}, the $2 \times 2$ minors of $S$ form a generating set for $\Ical(\Mcal_S)$.
\end{proof}

\begin{ex}
	Consider the matrix $S$ from Example \ref{ex:RunningExample}.
	In Figure \ref{Fig:RunningGraph}, we see that $G(S)$ has exactly one cycle. This cycle corresponds to the only $2\times2$ minor in $S$ that contains no zeros,
	which is the $\{2,3\} \times \{1,2\}$ submatrix.
	Therefore the (complex) Zariski closure of $\Mcal_S$ is the variety of the ideal generated by the polynomial $p_{21}p_{32} - p_{31}p_{22}$.
\end{ex}

\begin{prop}\label{prop:ModelEquationsSatisfied}
Let $S$ be set of indices such that $G_S$ is doubly chordal bipartite. Let $\{i_1,i_2\} \times \{j_1,j_2\}$ be a set of indices that corresponds to a $2 \times 2$ minor of $S$ that contains no zeros. Let $\hat{p}_{i_1j_1}, \hat{p}_{i_2 j_2}, \hat{p}_{i_1j_2}, \hat{p}_{i_1j_2}$ be as defined in Theorem \ref{thm:Main}. Then 
\begin{equation}\label{eqn:2by2minor}
\hat{p}_{i_1j_1}\hat{p}_{i_2j_2} = \hat{p}_{i_1j_2} \hat{p}_{i_2 j_1}
\end{equation}

\end{prop}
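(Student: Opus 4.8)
The plan is to clear all denominators and reduce \eqref{eqn:2by2minor} to a combinatorial identity of \emph{multisets} of subsets of $S$, and then verify that identity using the double-square-free structure of doubly chordal bipartite graphs. Write $\kappa=\{(i_1,j_1),(i_1,j_2),(i_2,j_1),(i_2,j_2)\}$ for the four ``corners'', so $\kappa\subseteq S$ by hypothesis. Recall from Section~\ref{sec:FixedColumn} that $x_{ij}=u_{i+}u_{+j}\big(\prod_{C\in\Int(ij)}C^+\big)\big(\prod_{D\in\Max(S)\setminus\Max(ij)}D^+\big)$ and $\hat p_{ij}=x_{ij}\big/\big(u_{++}\prod_{D\in\Max(S)}D^+\big)$, so \eqref{eqn:2by2minor} is equivalent to $x_{i_1j_1}x_{i_2j_2}=x_{i_1j_2}x_{i_2j_1}$. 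Writing $x_{ij}$ as the ratio $u_{i+}u_{+j}\prod_{C\in\Int(ij)}C^+\prod_{D\in\Max(S)}D^+\big/\prod_{D\in\Max(ij)}D^+$ and cross-multiplying, the factors $u_{i_1+}u_{i_2+}u_{+j_1}u_{+j_2}\big(\prod_{D\in\Max(S)}D^+\big)^2$ cancel from both sides, and since distinct subsets of $S$ give distinct (hence pairwise non-associate prime) linear forms $C^+$ in $\C[u]$, it suffices to prove the multiset equality
\[
\Int(i_1j_1)\uplus\Int(i_2j_2)\uplus\Max(i_1j_2)\uplus\Max(i_2j_1)=\Int(i_1j_2)\uplus\Int(i_2j_1)\uplus\Max(i_1j_1)\uplus\Max(i_2j_2).
\]

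The $\Max$ half is controlled by the following lemma: because $G_S$ is doubly chordal bipartite, no maximal clique of $S$ contains exactly one element of $\kappa$. Indeed, suppose a maximal clique $D$ contains $(i_1,j_1)$ but neither $(i_1,j_2)$ nor $(i_2,j_1)$. Then $j_2\notin\mathrm{cols}(D)$ and $i_2\notin\mathrm{rows}(D)$, so maximality of $D$ produces a row $r$ of $D$ with $(r,j_2)\notin S$ and a column $c$ of $D$ with $(i_2,c)\notin S$; since $\kappa\subseteq S$, the indices $r,i_1,i_2$ are distinct and $c,j_1,j_2$ are distinct. The edges of $G_S$ among $\{i_1,i_2,r\}\cup\{j_1,j_2,c\}$ are exactly $i_1j_1,i_1j_2,i_1c,i_2j_1,i_2j_2,rj_1,rc$, which (after reordering rows $r,i_1,i_2$ and columns $c,j_1,j_2$) is the double-square matrix of Example~\ref{ex:DoubleSquare}, i.e.\ a $6$-cycle with exactly one chord---contradicting double chordality. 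Hence each maximal clique $D$ meets $\kappa$ in $\varnothing$, a row-pair $\{i\}\times\{j_1,j_2\}$, a column-pair $\{i_1,i_2\}\times\{j\}$, or all of $\kappa$; running through these cases gives $\mathbf 1[D\in\Max(i_1j_1)]+\mathbf 1[D\in\Max(i_2j_2)]=\mathbf 1[D\in\Max(i_1j_2)]+\mathbf 1[D\in\Max(i_2j_1)]$ for every maximal clique $D$, that is, $\Max(i_1j_1)\uplus\Max(i_2j_2)=\Max(i_1j_2)\uplus\Max(i_2j_1)$.

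It then remains to prove $\Int(i_1j_1)\uplus\Int(i_2j_2)=\Int(i_1j_2)\uplus\Int(i_2j_1)$. Every member of $\Int(ij)$ contains $(i,j)$ and is a combinatorial rectangle. The key point is that no member of $\Int(i_1j_1)$ meets $\kappa$ in exactly $\{(i_1,j_1)\}$: writing such a member as $C=D\cap D'$ with $D\ne D'$ in $\Max(i_1j_1)$, the lemma restricts $D$ and $D'$ each to the three corner-types above, and in every combination a second corner is forced into $C$ except when $D$ is a column-pair clique (so $i_2\in\mathrm{rows}(D)$) and $D'$ is a row-pair clique (so $j_2\in\mathrm{cols}(D')$); but in that case $(\mathrm{rows}(C)\cup\{i_2\})\times(\mathrm{cols}(C)\cup\{j_2\})$ is a clique strictly containing $C$, hence lies in some maximal clique $E\in\Max(i_1j_1)$, and then $C\subsetneq D\cap E$ with $D\ne E$ in $\Max(i_1j_1)$, contradicting maximality of $C$. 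So each member of $\Int(i_1j_1)$ meets $\kappa$ in a row-pair through $i_1$, a column-pair through $j_1$, or all of $\kappa$ (and similarly for the other three $\Int$'s). Finally, for a rectangle $C$ with $\kappa\cap C$ equal to the row-pair $\{(i_1,j_1),(i_1,j_2)\}$ one has $C\in\Int(i_1j_1)\iff C\in\Int(i_1j_2)$, since every maximal clique containing $C$ contains both of those corners, so the families of pairwise intersections of maximal cliques that contain $C$ coincide for $\Max(i_1j_1)$ and $\Max(i_1j_2)$; the analogous equivalences hold for column-pairs and for the full $\kappa$. A short bookkeeping over the value of $\kappa\cap T$ then yields $\mathbf 1[T\in\Int(i_1j_1)]+\mathbf 1[T\in\Int(i_2j_2)]=\mathbf 1[T\in\Int(i_1j_2)]+\mathbf 1[T\in\Int(i_2j_1)]$ for every $T\subseteq S$, which is the desired equality; combining it with the $\Max$ identity (and using that identity once more to swap $\Max(i_1j_1)\uplus\Max(i_2j_2)$ for $\Max(i_1j_2)\uplus\Max(i_2j_1)$) completes the proof. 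I expect the exceptional combination in the $\Int$ argument to be the main obstacle: carefully enumerating how two maximal cliques through a common corner can intersect, and producing the enlargement that rules out the one-corner case.
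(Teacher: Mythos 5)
Your proof is correct, but it takes a genuinely different route from the paper's. The paper reuses the fixed-column machinery of Section \ref{sec:FixedColumn}: fixing the column $j_1$, it invokes Propositions \ref{prop:MaxCliqueCharacterization} and \ref{prop:MaxIntCharacterization} to write a maximal clique in $\Max(i_1j_1)$ as an induced clique $D_{\alpha}$ and a maximal intersection as $D_{\alpha}\cap D_{\beta}$ with a cover relation in $P(j_1)$, and then applies the DS-free condition to row sets to show that any such clique (resp.\ intersection) missing $(i_2,j_1)$ must contain $(i_1,j_2)$; multiplicities are dispatched by the remark that a clique containing two diagonal corners contains all four, so squared factors match on both sides. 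You instead argue directly from the definition of doubly chordal bipartite, without the block/poset apparatus: your key lemma that no maximal clique meets the corner set $\kappa$ in exactly one element is proved by exhibiting a $6$-cycle on $r,c,i_1,j_2,i_2,j_1$ with exactly one chord $i_1j_1$, and your analogous statement for $\Int(i_1j_1)$ follows from a case analysis on the corner types of the two maximal cliques, with the single problematic combination eliminated by the enlargement $(\mathrm{rows}(C)\cup\{i_2\})\times(\mathrm{cols}(C)\cup\{j_2\})$, which lies in a maximal clique $E\in\Max(i_1j_1)$ and contradicts containment-maximality of $C$. Combined with the observation that a rectangle containing two corners lies in one of the corresponding $\Int$ sets if and only if it lies in the other, the indicator bookkeeping yields the multiset identity that clearing denominators actually demands. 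Your approach buys self-containedness and explicit control of multiplicities (the paper's set-union formulation leans on the squared-terms remark and implicitly on the same two-corner observation when it says ``we are done''), at the cost of length; the paper's buys brevity by piggybacking on structure already established for Lemma \ref{lem:SumInBlock}. The only details you leave unverified in the enlargement step are routine and do hold: $(i_2,j_1)\in D\cap E\setminus C$ gives strictness of $C\subsetneq D\cap E$, and $j_2\in\cols(E)\setminus\cols(D)$ gives $E\neq D$.
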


\begin{proof}
The terms $u_{i_1+}, u_{i_2+}, u_{+j_1}$ and $u_{+j_2}$ each appear once in the numerator on each side of Equation (\ref{eqn:2by2minor}), and $u_{++}^2$ appears in both denominators. Furthermore if $(i_1,j_1)$ and $(i_2,j_2)$ are both contained in any clique in $S$, then $(i_1,j_2)$ and $(i_2,j_1)$ are also in the clique by definition. So any term that is squared in the numerator or denominator on one side of Equation (\ref{eqn:2by2minor}) is also squared on the other side. Therefore it suffices to show that $\Max(i_1j_1) \cup \Max(i_2j_2) = \Max(i_1j_2) \cup \Max(i_2j_1)$ and $\Int(i_1j_1) \cup \Int(i_2j_2) = \Int(i_1j_2) \cup \Int(i_2j_1)$.

First, we will show that $\Max(i_1j_1) \cup \Max(i_2j_2) = \Max(i_1j_2) \cup \Max(i_2j_1)$. 
Let $D \in \Max(i_1j_1)$. If $(i_2,j_1) \in D$, then we are done. 

Now suppose that $(i_2,j_1) \not\in D$. 
Since $D$ intersects column $j_1$, by Proposition \ref{prop:MaxCliqueCharacterization} we know that $D$ has the form $D_{\alpha}$ 
for some block of columns $B_{\alpha}$ that are identical on $\rowsone(j_1)$. 
Let $\rowsone(D_{\alpha})$ denote the set of nonzero rows of $D_{\alpha}$ that are also nonzero rows of $j_1$. 
Since $(i_2,j_1) \not\in D$, we have that $i_2 \not\in \rowsone(D_{\alpha})$ while $i_1 \in \rowsone(D_{\alpha})$.
Since $\rowsone(j_2) \cap \rowsone(D_{\alpha})$ is nonempty, and since $\rowsone(j_2) \not\subset \rowsone(D_{\alpha})$,
we must have that $\rowsone(D_{\alpha}) \subset \rowsone(j_2)$ by the DS-free condition.
Therefore $(i_1,j_2) \in D_{\alpha}$ by definition of $D_{\alpha}$.
So $D_{\alpha} = D \in \Max(i_1,j_2)$, as needed.

Switching the roles of $i_1$ and $i_2$ or the roles of $j_1$ and $j_2$ yields the desired equality.

Now let $C \in \Int(i_1,j_1)$. Then $C = D_{\alpha} \cap D_{\beta}$ where $D_{\beta} \lessdot D_{\alpha}$ in the poset $P(j_1)$ by Proposition \ref{prop:MaxIntCharacterization}.
If $(i_2, j_1) \in C$, then we are done.

Now suppose that $(i_2,j_1) \not\in C$. 
Then we have that $i_2 \not\in \rowsone(D_{\beta})$, whereas $i_1 \in \rowsone(D_{\alpha})$ and $i_1 \in \rowsone(D_{\beta})$. 
So we must have that $\rowsone(D_{\beta}) \subsetneq \rowsone(j_2)$ by the DS-free condition.
Since $\rowsone(D_{\alpha}) \cap \rowsone(j_2)$ is nonempty, we must have that
$\rowsone(D_{\alpha}) \subset \rowsone(j_2)$. This follows from the DS-free condition and the fact that $D_{\alpha}$ covers $D_{\beta}$ in the poset $P(j_1)$.
Therefore $(i_1,j_2) \in D_{\alpha}, D_{\beta}$ by definition of these cliques.
So $C \in \Int(i_1,j_2)$, as needed.

Again, switching the roles of $i_1$ and $i_2$ or the roles of $j_1$ and $j_2$ in the above proof yields the desired equality.
\end{proof}
%

\begin{proof}[Proof of Lemma \ref{lem:MLEinModel}]
By Proposition \ref{prop:DefiningEquations}, the vanishing ideal of $\Mcal_S$ consists of all fully-observed $2 \times 2$ minors of $S$.
By Proposition \ref{prop:ModelEquationsSatisfied}, each of these $2 \times 2$ minors vanishes when evaluated on $\hat{p}$.
\end{proof}

We can now prove Theorem \ref{thm:Main}.

\begin{proof}[Proof of Theorem \ref{thm:Main}]
Let $G_S$ be doubly chordal bipartite.
Let $u \in \R_+^S$ be a matrix of counts.
By Corollary \ref{cor:SumEntireBlock}, the column marginals of $u_{++} \hat{p}$ are equal to those of $u$. 
Switching the roles of rows and columns in all of the proofs used to obtain this corollary shows that the row marginals are also equal. 
Corollary \ref{cor:SumEntireBlock} also implies that $\hat{p}_{++} = 1$ since the vector of all ones is in the rowspan of $A(S)$.
So by Lemma \ref{lem:MLEinModel} and the fact each $\hat{p}$ is positive, $\hat{p} \in \Mcal_S$. 
Hence by Birch's theorem, $\hat{p}$ is the maximum likelihood estimate for $u$.
The other direction is exactly the contrapositive of Theorem \ref{thm:NotDCB}.
\end{proof}

\section*{Acknowledgments}

Jane Coons was partially supported by the US National Science Foundation (DGE 1746939).
 Seth Sullivant was partially supported by the US National Science Foundation (DMS 1615660).

\bibliographystyle{acm}

\bibliography{quasi-indep.bib}

\begin{thebibliography}{10}

\bibitem{agresti1992}
{\sc Agresti, A.}
\newblock Modelling patterns of agreement and disagreement.
\newblock {\em Statistical methods in medical research 1}, 2 (1992), 201--218.

\bibitem{amendola2019}
{\sc Am\'{e}ndola, C., Bliss, N., Burke, I., Gibbons, C.~R., Helmer, M.,
  Ho\c{s}ten, S., Nash, E.~D., Rodriguez, J.~I., and Smolkin, D.}
\newblock The maximum likelihood degree of toric varieties.
\newblock {\em J. Symbolic Comput. 92\/} (2019), 222--242.

\bibitem{aoki2012}
{\sc Aoki, S., Hara, H., and Takemura, A.}
\newblock {\em Markov bases in algebraic statistics}, vol.~199.
\newblock Springer Science \& Business Media, 2012.

\bibitem{bishop2007}
{\sc Bishop, Y. M.~M., Fienberg, S.~E., and Holland, P.~W.}
\newblock {\em Discrete multivariate analysis: theory and practice}.
\newblock Springer, New York, 2007.
\newblock With the collaboration of Richard J. Light and Frederick Mosteller,
  Reprint of the 1975 original.

\bibitem{bocci2019}
{\sc Bocci, C., and Rapallo, F.}
\newblock Exact tests to compare contingency tables under quasi-independence
  and quasi-symmetry.
\newblock {\em J. Algebr. Stat. 10}, 1 (2019), 13--29.

\bibitem{colombo1988}
{\sc Colombo, A., and Ihm, P.}
\newblock A quasi-independence model to estimate failure rates.
\newblock {\em Reliability Engineering \& System Safety 21}, 4 (1988),
  309--318.

\bibitem{diaconis1998}
{\sc Diaconis, P., and Sturmfels, B.}
\newblock Algebraic algorithms for sampling from conditional distributions.
\newblock {\em The Annals of Statistics 26}, 1 (1998), 363--397.

\bibitem{duarte2019}
{\sc Duarte, E., Marigliano, O., and Sturmfels, B.}
\newblock Discrete statistical models with rational maximum likelihood
  estimator.
\newblock arXiv:1903.06110, 2019.

\bibitem{geiger2006}
{\sc Geiger, D., Meek, C., and Sturmfels, B.}
\newblock On the toric algebra of graphical models.
\newblock {\em Ann. Statist. 34}, 3 (2006), 1463--1492.

\bibitem{goodman1994}
{\sc Goodman, L.~A.}
\newblock On quasi-independence and quasi-dependence in contingency tables,
  with special reference to ordinal triangular contingency tables.
\newblock {\em Journal of the American Statistical Association 89}, 427 (1994),
  1059--1063.

\bibitem{gouveia2013}
{\sc Gouveia, J.~a., Grappe, R., Kaibel, V., Pashkovich, K., Robinson, R.~Z.,
  and Thomas, R.~R.}
\newblock Which nonnegative matrices are slack matrices?
\newblock {\em Linear Algebra Appl. 439}, 10 (2013), 2921--2933.

\bibitem{huh2014}
{\sc Huh, J.}
\newblock Varieties with maximum likelihood degree one.
\newblock {\em J. Algebr. Stat. 5}, 1 (2014), 1--17.

\bibitem{hs2014}
{\sc Huh, J., and Sturmfels, B.}
\newblock Likelihood geometry.
\newblock In {\em Combinatorial algebraic geometry}, vol.~2108 of {\em Lecture
  Notes in Math.} Springer, Cham, 2014, pp.~63--117.

\bibitem{kapranov1991}
{\sc Kapranov, M.~M.}
\newblock A characterization of {$A$}-discriminantal hypersurfaces in terms of
  the logarithmic {G}auss map.
\newblock {\em Math. Ann. 290}, 2 (1991), 277--285.

\bibitem{rapallo2005}
{\sc Rapallo, F.}
\newblock Algebraic exact inference for rater agreement models.
\newblock {\em Stat. Methods Appl. 14}, 1 (2005), 45--66.

\bibitem{rauh2011}
{\sc Rauh, J., Kahle, T., and Ay, N.}
\newblock Support sets in exponential families and oriented matroid theory.
\newblock {\em Internat. J. Approx. Reason. 52}, 5 (2011), 613--626.

\bibitem{smith1995}
{\sc Smith, P.~W., and McDonald, J.~W.}
\newblock Exact conditional tests for incomplete contingency tables: estimating
  attained significance levels.
\newblock {\em Statistics and Computing 5}, 3 (1995), 253--256.

\bibitem{sullivant2018}
{\sc Sullivant, S.}
\newblock {\em Algebraic statistics}, vol.~194 of {\em Graduate Studies in
  Mathematics}.
\newblock American Mathematical Society, Providence, RI, 2018.

\end{thebibliography}

\end{document}